      \def\dR{{\mathbb R}}
\def\bm\chi{\mbox{\boldmath$\chi$}}
\def\min{{\rm min\,}}
\def\max{{\rm max\,}}
\def\ker{{\rm ker\,}}
\def\ran{{\rm ran\,}}
\def\dom{{\rm dom\,}}
\def\dim{{\rm dim\,}}
\let\xker=\ker \def\ker{{\xker\,}}
\def\senki{{\lbrack\negthinspace [\bot ]\negthinspace\rbrack}}
\def\senki+{{\lbrack\negthinspace [+] \negthinspace\rbrack}}
\newtheorem{theorem}{Theorem}[section]
\newtheorem{proposition}[theorem]{Proposition}
\newtheorem{corollary}[theorem]{Corollary}
\newtheorem{lemma}[theorem]{Lemma}
\theoremstyle{definition}
\newtheorem{definition}[theorem]{Definition}
\newtheorem{remark}[theorem]{Remark}
\numberwithin{equation}{section}
\begin{document}

\title[Single layer boundary integral operator for the Dirac equation]{On the single layer boundary integral operator for the Dirac equation}

\author[M.~Holzmann]{Markus Holzmann}

\address{Institut f\"ur Angewandte Mathematik\\
Technische Universit\"at Graz \\
Steyrergasse 30\\
8010 Graz \\
Austria}
\email{holzmann@math.tugraz.at}


\begin{abstract}
  This paper is devoted to the analysis of the single layer boundary integral operator $\mathcal{C}_z$ for the Dirac equation in the two- and three-dimensional situation. The map $\mathcal{C}_z$ is the strongly singular integral operator having the integral kernel of the resolvent of the free Dirac operator $A_0$ and $z$ belongs to the resolvent set of $A_0$. In the case of smooth boundaries fine mapping properties and a decomposition of $\mathcal{C}_z$ in a 'positive' and 'negative' part are analyzed. The obtained results can be applied in the treatment of Dirac operators with singular electrostatic, Lorentz scalar, and anomalous magnetic interactions that are combined in a critical way.
\end{abstract}

\maketitle

\section{Introduction}

In the analysis of boundary value and transmission problems for partial differential equations associated potential and  boundary integral operators often play an important role. These objects are well understood for uniformly elliptic second order differential expressions, cf. the monograph \cite{M00} and the references therein. We remark that the transmission problems are closely related to differential operators with singular interactions like, e.g., $\delta$-potentials. In the recent years it turned out that similar objects are also of importance in the study of boundary value and transmission problems for the Dirac equation, which is the relativistic counterpart of the Laplace equation and for which the associated differential expression is of first order and lacks ellipticity. While the potential operator is sufficiently well investigated, the subtle examination of the associated boundary integral operator is less complete, see the review paper \cite{BHSS22} and the references therein. It is the main goal in this article to make some further contributions to this study that are, in particular, necessary to examine Dirac operators with singular $\delta$-potentials, where the involved parameters are combined in a critical way.

To introduce the problem setting in  a more detailed way, consider first a formally symmetric and strongly elliptic second order partial differential operator $\mathcal{P}$ in $\mathbb{R}^q$, $q \geq 2$, let $P_0$ be the self-adjoint realization of $\mathcal{P}$ defined on $H^2(\mathbb{R}^q)$, and let $E_z(x,y)$ be the integral kernel of $(P_0 - z)^{-1}$, $z \in \rho(P_0)$. Moreover, let $\Omega \subset \mathbb{R}^q$ be a sufficiently smooth domain with unit normal vector field $\nu$ that is pointing outwards of $\Omega$ and let $\mathcal{B}_\nu$ be the conormal derivative at $\partial \Omega$ associated with $\mathcal{P}$. Then, the single layer potential $SL(z)$ and the double layer potential $DL(z)$ applied to a sufficiently smooth function $\varphi: \partial \Omega \rightarrow \mathbb{C}$ evaluated at  $x \in \mathbb{R}^q \setminus \partial \Omega$ are
\begin{equation*}
  \begin{split}
    SL(z) \varphi(x) &= \int_{\partial \Omega} E_z(x,y) \varphi(y) \textup{d} \sigma(y),
    \quad 
    DL(z) \varphi(x) = \int_{\partial \Omega} \partial_{\mathcal{B}_\nu, y} E_z(x,y) \varphi(y) \textup{d} \sigma(y).
  \end{split}
\end{equation*}
It is known that all solutions of the partial differential equation 
\begin{equation} \label{diff_eq}
  (\mathcal{P}-z) f = g \quad \text{in} \quad \mathbb{R}^q \setminus \partial \Omega
\end{equation}
can be described with the help of $SL(z)$ and $DL(z)$. To analyze boundary value or transmission problems associated with the above equation, it is common to employ the Dirichlet trace operator $\gamma_D$ and $\mathcal{B}_\nu$ to $SL(z)$ and $DL(z)$. Of particular interest is the single layer boundary integral operator $\mathcal{S}(z) := \gamma_D SL(z)$. Its properties are closely related to the solvability of the Dirichlet boundary value problem for~\eqref{diff_eq}, and relevant properties, as, e.g., its ellipticity, are inherited from those of $\mathcal{P}$, cf. \cite[Chapter~7]{M00}. Moreover,  a good understanding of $\mathcal{S}$ can be useful for a thorough understanding of different problems related to $\mathcal{P}$ in an operator theoretic language, for instance to show the self-adjointness and compute spectral properties of operators associated with boundary value or transmission problems for $\mathcal{P}$ including perturbations of $P_0$ by singular interactions, cf. \cite{BLL13, BEKS94, HU20, M00}.

It is the main goal in this paper to provide some properties of the counterpart of the single layer boundary integral operator for the Dirac operator in the two- and three-dimensional situation. For $m \geq 0$ the free Dirac operator in dimension two is given by
\begin{equation} \label{def_A_0_2d}
  A_{0} f = -i \sigma_1 \partial_1 f - i \sigma_2 \partial_2 f + m \sigma_3 f, \quad \dom A_{0} = H^1(\mathbb{R}^2; \mathbb{C}^2),
\end{equation}
where $\sigma_1, \sigma_2, \sigma_3 \in \mathbb{C}^{2 \times 2}$ are the Pauli spin matrices defined in~\eqref{Dirac_matrices1}, and in dimension three it is 
\begin{equation} \label{def_A_0_3d}
  A_{0} f = -i \alpha_1 \partial_1 f - i \alpha_2 \partial_2 f - i \alpha_3 \partial_3 f + m \beta f, \quad \dom A_{0} = H^1(\mathbb{R}^3; \mathbb{C}^4),
\end{equation}
where $\alpha_1, \alpha_2, \alpha_3, \beta \in \mathbb{C}^{4 \times 4}$ are the Dirac matrices in~\eqref{Dirac_matrices2}. Here, we used the notation $H^k(\mathbb{R}^q; \mathbb{C}^l) = H^k(\mathbb{R}^q) \otimes \mathbb{C}^l$ for the $L^2$-based Sobolev space of $k$ times weakly differentiable vector valued functions. The free Dirac operator  is used to describe the propagation of a spin $\frac{1}{2}$ particle in vacuum taking effects of the special theory of relativity into account \cite{T92}. Moreover, the two-dimensional Dirac operator appears in the mathematical description of graphene \cite{AB08}. It is known that $A_{0}$ is self-adjoint and that its spectrum is 
\begin{equation*}
  \sigma(A_{0}) = (-\infty, -m] \cup [m, \infty),
\end{equation*}
cf. \cite{T92}. This shows, in particular, that $A_{0}$ is not semi-bounded from above or below. For $z \in \rho(A_{0}) = \mathbb{C} \setminus ((-\infty, -m] \cup [m, \infty))$ the resolvent of $A_{0}$ can be expressed via the convolution with a function $G_{z,q}$ given in~\eqref{def_G_lambda} below, i.e. as an integral operator. With the help of this function we can formally introduce for a smooth and closed curve $\Sigma \subset \mathbb{R}^2$ or a smooth and closed surface $\Sigma \subset \mathbb{R}^3$ the boundary integral operator acting on sufficiently smooth functions $\varphi: \Sigma \rightarrow \mathbb{C}^N$ as
\begin{equation} \label{def_C_z_intro}
  \mathcal{C}_z \varphi(x) := \lim_{\varepsilon \searrow 0} \int_{\Sigma \setminus B(x, \varepsilon)} 
  G_{z,q}(x-y) \varphi(y) \textup{d}\sigma(y), \quad x \in \Sigma,
\end{equation}
where $B(x, \varepsilon)$ is the ball centered at $x$ with radius $\varepsilon$ and $N=2$ for $q=2$ and $N=4$ for $q=3$.
It is known that $\mathcal{C}_z$ gives rise to a bounded operator in $L^2(\Sigma; \mathbb{C}^N)$, but in a similar way as $A_{0}$ also $\mathcal{C}_z$ lacks ellipticity \cite{AMV14, AGHM01, BHOP20}. 

The operator $\mathcal{C}_z$ plays an important role in the analysis of boundary value and transmission problems for the Dirac equation and motivated by this $\mathcal{C}_z$ was studied intensively in the recent years, cf. \cite{AMV14, AMSV22, BEHL18, BHM20, BHOP20, BHSS22, Ben21, Ben22, BBZ22, OV16}. In the present paper  this study is continued and, in particular, detailed mapping properties and a more detailed analysis of the contribution of $\mathcal{C}_z$ associated with the 'positive' and the 'negative' part of its spectrum are provided. More precisely, as a consequence of the main results, it is shown in dimension $q=2$ in Corollary~\ref{corollary_positive_negative_2d} that 
\begin{equation*}
  \mathcal{C}_z = \frac{1}{2} (\sigma_1 \nu_1 + \sigma_2 \nu_2) V^* \begin{pmatrix} 0 & P_- - P_+ \\ P_- - P_+ & 0 \end{pmatrix} V (\sigma_1 \nu_1 + \sigma_2  \nu_2) + \mathcal{K},
\end{equation*}
and similarly in dimension $q=3$ in Corollary~\ref{corollary_positive_negative_3d} that
\begin{equation*}
  \mathcal{C}_z = \frac{1}{2} (\alpha_1 \nu_1 + \alpha_2 \nu_2 + \alpha_3 \nu_3) V^* \begin{pmatrix} 0 & P_- - P_+ \\ P_- - P_+ & 0 \end{pmatrix} V (\alpha_1 \nu_1 + \alpha_2 \nu_2 + \alpha_3 \nu_3) + \mathcal{K},
\end{equation*}
where $\nu = (\nu_1, \dots, \nu_q)$ is the unit normal vector on $\Sigma$ pointing outwards of the bounded domain with boundary $\Sigma$, $V$ is a unitary matrix-valued function defined in~\eqref{def_V_2d} and~\eqref{def_V_3d} below, $\mathcal{K}$ is an operator with good mapping properties between different Sobolev spaces (and hence, $\mathcal{K}$ is compact in $H^s(\Sigma; \mathbb{C}^N)$ for $s \in [-1, 1]$), and $P_\pm$ are self-adjoint operators in $L^2(\Sigma; \mathbb{C}^{N/2})$ that satisfy 
\begin{equation} \label{properties_P_pm}
  P_+ + P_- = I, \quad \dim \ran P_\pm = \infty, \quad P_\pm^2 = P_\pm, \quad \text{and} \quad P_\pm P_\mp = 0,
\end{equation}
i.e. $P_\pm$ are orthogonal projections in $L^2(\Sigma; \mathbb{C}^{N/2})$. While for dimension $q=2$ the operators $P_\pm$ are also orthogonal projections in $H^s(\Sigma; \mathbb{C})$, $s \in \mathbb{R}$, that satisfy~\eqref{properties_P_pm}, in dimension $q=3$ we prove that $P_\pm$ give rise to bounded maps in $H^s(\Sigma; \mathbb{C}^2)$, $s \in [-1, 1]$, that fulfil~\eqref{properties_P_pm} in $H^s(\Sigma; \mathbb{C}^2)$.
We remark that in dimension $q=3$ similar, but not exactly the same decompositions of $\mathcal{C}_z$ were considered in \cite{AMSV22, OV16} with skew projections onto Hardy spaces on $\Sigma$; cf. Remark~\ref{remark_Hardy_spaces} for details. However, we believe that the above representations are of interest, as the projections in \cite{AMSV22, OV16} are not self-adjoint in $L^2(\Sigma; \mathbb{C}^{N/2})$ unless $\Sigma$ is a sphere, but which is useful in some applications. One possible application of this fact is shown in the very recent paper \cite{BHSS22}, where Dirac operators perturbed by singular potentials involving electrostatic, Lorentz scalar, and anomalous magnetic interactions combined in a critical way are studied. 

It should be remarked that the above mentioned results are proved in different ways in dimension two and three. In dimension two $\mathcal{C}_z$ is closely related to the Cauchy transform on $\Sigma$, for its analysis we follow ideas from \cite{BHOP20} and apply the theory of periodic pseudodifferential operators. In dimension three $\mathcal{C}_z$ is closely related to the Riesz transform on $\Sigma$ and results on pseudo-homogeneous kernels from \cite{N01} are employed to show similar results as in dimension two. In particular, the latter approach would allow to weaken the geometric assumptions on $\Sigma$ and to consider also higher space dimensions. We remark that in the recent paper \cite{BP22}, which was written independently of this article, the operator $\mathcal{C}_z$ is studied in the three-dimensional situation with the help of pseudodifferential techniques and results that are related to the ones in this paper are recovered there as well.

The paper is organized as follows. In Section~\ref{section_C_z} we rigorously introduce the operator $\mathcal{C}_z$ formally given by~\eqref{def_C_z_intro} and discuss its basic properties. Then, in Section~\ref{section_2d} we recall some results about periodic pseudodifferential operators and do a more detailed analysis of $\mathcal{C}_z$ in the two-dimensional case. Finally, in Section~\ref{section_3d} we revise basic notions and results on pseudo-homogeneous kernels and use them to investigate $\mathcal{C}_z$ in dimension three.

\subsection*{Acknowledgement.}
The author is grateful to the anonymous referee for helpful suggestions to improve the paper. He also thanks Jussi Behrndt, Dale Frymark, Christian Stelzer, and Georg Stenzel for fruitful discussions.
Moreover, he gratefully acknowledges financial support by the Austrian Science Fund (FWF): P33568-N. This publication is based upon work from COST Action CA 18232 MAT-DYN-NET, supported by COST (European Cooperation in Science and Technology), www.cost.eu.

\section{Notations and basic properties of $\mathcal{C}_z$} \label{section_C_z}

Throughout this paper, let $q \in \{ 2, 3 \}$ be the space dimension and define the number $N = N(q)$ by $N(2)=2$ and $N(3)=4$. Moreover, we assume that $\Omega \subset \mathbb{R}^q$ is a bounded and simply connected domain with $C^\infty$-smooth boundary $\Sigma := \partial \Omega$ and unit normal vector field $\nu$ which is pointing outwards of $\Omega$. Let 
\begin{equation} \label{Dirac_matrices1}
\sigma_1 = \left( \begin{array}{cc}
0 & 1\\                                              
1 & 0 \\                                            
\end{array}\right), \quad \sigma_2 = \left( \begin{array}{cc}
0 & -i\\                                              
i & 0 \\                                            
\end{array}\right), \quad \sigma_3 = \left( \begin{array}{cc}
1 & 0\\                                              
0 & -1 \\                                            
\end{array}\right) 
\end{equation}
be the Pauli spin matrices and define the $4\times 4$ Dirac matrices by
\begin{equation} \label{Dirac_matrices2}
\alpha_j = \left( \begin{array}{cc}
0 & \sigma_j \\                                              
\sigma_j & 0 \\                                            
\end{array}\right), \quad j \in \{1,2,3\}, \quad  \beta = \left( \begin{array}{cc}
I_2 & 0 \\                                              
0 & -I_2 \\                                            
\end{array}\right),
\end{equation}
where $I_n$ is the $n \times n$-identity matrix. We will often use for $x = (x_1,x_2) \in \mathbb{C}^2$ the notation
\begin{equation*}
  \sigma \cdot x = \sigma_1 x_1 + \sigma_2 x_2
\end{equation*}
and for $x = (x_1, x_2, x_3) \in \mathbb{C}^3$
\begin{equation*}
  \alpha \cdot x = \alpha_1 x_1 + \alpha_2 x_2 + \alpha_3 x_3 \quad \text{and} \quad
  \sigma \cdot x = \sigma_1 x_1 + \sigma_2 x_2 + \sigma_3 x_3.
\end{equation*}
Next, we introduce the function $G_{z, q}$ evaluated at $x \in \mathbb{R}^q \setminus \{ 0 \}$ by
\begin{equation} \label{def_G_lambda}
\begin{split}
G_{z, 2}(x) &= \frac{\sqrt{z^2 - m^2}}{2 \pi} K_1 \big( - i \sqrt{z^2 - m^2} | x |\big)  \frac{(\sigma \cdot x)}{ | x | }  \\
& \qquad \qquad \qquad+ \frac{1}{2 \pi} K_0 \big(- i \sqrt{z^2 - m^2} | x |\big) \big( z I_2 + m \sigma_3\big), \\
G_{z,3}(x) &= \left( z I_4 + m \beta + \left( 1 - i \sqrt{z^2 - m^2} | x |\right) \frac{i (\alpha \cdot x)}{  | x |^2}  \right) \frac{1}{4 \pi | x |} e^{i \sqrt{z^2 - m^2} |x|},
\end{split}
\end{equation}
where we write $K_j$ for the modified Bessel functions of the second kind
and choose $\sqrt{w}$ for $w \in \mathbb{C} \setminus [0, \infty)$ such that $\textup{Im} \sqrt{w} > 0$. It is well-known that $G_{z, q}$ is the integral kernel of the resolvent of the free Dirac operator $A_0$ in~\eqref{def_A_0_2d} \&~\eqref{def_A_0_3d}; cf. \cite{BHOP20, BHSS22, T92}.

Now, we are prepared to rigorously introduce and discuss the properties of the operator $\mathcal{C}_z$, $z \in \rho(A_0)=(-\infty, -m] \cup [m, \infty)$, formally given by~\eqref{def_C_z_intro}, i.e. we consider now the strongly singular integral operator
\begin{equation} \label{def_C_lambda}
  \mathcal{C}_z \varphi(x) := \lim_{\varepsilon \searrow 0} \int_{\Sigma \setminus B(x, \varepsilon)} 
  G_{z,q}(x-y) \varphi(y) \textup{d}\sigma(y), \quad
  \varphi \in C^\infty(\Sigma; \mathbb{C}^N),~x \in \Sigma;
\end{equation}
here $B(x, \varepsilon)$ is the ball of radius $\varepsilon$ centered at $x$.
The basic properties of $\mathcal{C}_z$ are summarized in the following proposition; in the proof we mostly refer to \cite{BHSS22}, but parts of the results were shown before in \cite{BHOP20} in dimension $q=2$ and in \cite{AMV14, AMV15, BH20, BHM20, BBZ22, OV16} in dimension $q=3$. Below, $H^s(\Sigma)$, $s \in \mathbb{R}$, are the Sobolev spaces on $\Sigma$ defined as in \cite{M00} and we denote by $( \cdot, \cdot )_{H^s(\Sigma; \mathbb{C}^N) \times H^{-s}(\Sigma; \mathbb{C}^N)}$ the sesquilinear duality product in $H^s(\Sigma; \mathbb{C}^N) \times H^{-s}(\Sigma; \mathbb{C}^N)$.

\begin{proposition} \label{proposition_C_z_basic_properties}
  For the operator $\mathcal{C}_z$, $z \in \rho(A_0)=\mathbb{C} \setminus ((-\infty, -m] \cup [m, \infty))$, introduced in~\eqref{def_C_lambda} and any $s \in \mathbb{R}$ the following holds:
  \begin{itemize}
    \item[(i)] The map $\mathcal{C}_z$ gives rise to a bounded operator in $H^s(\Sigma; \mathbb{C}^N)$.
    \item[(ii)] For $\varphi \in H^s(\Sigma; \mathbb{C}^N)$ and $\psi \in H^{-s}(\Sigma; \mathbb{C}^N)$ one has 
    \begin{equation*}
      ( \mathcal{C}_z \varphi, \psi )_{H^s(\Sigma; \mathbb{C}^N) \times H^{-s}(\Sigma; \mathbb{C}^N)} = ( \varphi, \mathcal{C}_{\overline{z}} \psi )_{H^s(\Sigma; \mathbb{C}^N) \times H^{-s}(\Sigma; \mathbb{C}^N)}.
    \end{equation*}
    In particular, for the realization of $\mathcal{C}_z$ in $L^2(\Sigma; \mathbb{C}^N)$ the relation $\mathcal{C}_z^* = \mathcal{C}_{\overline{z}}$ holds.
    \item[(iii)] For $q=2$ one has $-4 (\mathcal{C}_z (\sigma \cdot \nu))^2 = -4 ((\sigma \cdot \nu) \mathcal{C}_z)^2 = I_2$ and for $q=3$ the relation $-4 (\mathcal{C}_z (\alpha \cdot \nu))^2 = -4 ((\alpha \cdot \nu) \mathcal{C}_z)^2 = I_4$ holds.
  \end{itemize}
\end{proposition}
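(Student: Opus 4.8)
The plan is to establish the three items separately, with (i) and (ii) being essentially citations to existing literature and (iii) being the genuinely new algebraic computation that requires care.

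\textbf{Items (i) and (ii).} For the boundedness on $H^s(\Sigma;\mathbb{C}^N)$ in part (i), I would invoke the results from \cite{BHSS22} (with the earlier partial results in \cite{BHOP20} for $q=2$ and \cite{AMV14, AMV15, BH20, BHM20, BBZ22, OV16} for $q=3$): the kernel $G_{z,q}$ has the same leading singularity as the Cauchy kernel ($q=2$) or the Riesz kernel ($q=3$), so $\mathcal{C}_z$ is a pseudodifferential operator of order $0$ on the smooth compact manifold $\Sigma$, hence bounded on every $H^s(\Sigma;\mathbb{C}^N)$. For part (ii), the key observation is that the matrix kernel satisfies $G_{z,q}(x-y)^* = G_{\overline z, q}(y-x)$; this follows by inspecting the explicit formulas in \eqref{def_G_lambda}, using that the Pauli and Dirac matrices are Hermitian, that $\overline{K_j(w)} = K_j(\overline w)$ and $\overline{\sqrt{z^2-m^2}} = \sqrt{\overline z^2 - m^2}$ (with the branch conventions), and that $\sigma\cdot(x-y)$ and $\alpha\cdot(x-y)$ are odd in $x-y$ while the coefficient functions involving $|x-y|$ are even. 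Once this kernel identity is in hand, the duality relation in (ii) follows from Fubini's theorem (with the principal value understood), first for smooth $\varphi,\psi$ and then by density and the boundedness from (i); taking $s=0$ gives $\mathcal{C}_z^* = \mathcal{C}_{\overline z}$ on $L^2(\Sigma;\mathbb{C}^N)$.

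\textbf{Item (iii).} This is where the real work lies. I expect to reduce everything to a Plemelj–Sokhotski type jump relation for the single layer potential $\Phi_z$ associated with $G_{z,q}$ together with the fact that $A_0 - z$ annihilates $\Phi_z\varphi$ off $\Sigma$. Concretely: for $\varphi \in C^\infty(\Sigma;\mathbb{C}^N)$ define $(\Phi_z\varphi)(x) = \int_\Sigma G_{z,q}(x-y)\varphi(y)\,\textup{d}\sigma(y)$ for $x \in \mathbb{R}^q\setminus\Sigma$; it is known that the nontangential boundary values $\Phi_z^\pm\varphi$ from inside and outside $\Omega$ exist and satisfy
\begin{equation*}
  \Phi_z^\pm\varphi = \mathcal{C}_z\varphi \mp \tfrac{i}{2}(\alpha\cdot\nu)\varphi
\end{equation*}
in dimension three (and the analogous formula with $\alpha\cdot\nu$ replaced by $\sigma\cdot\nu$ in dimension two — I would cite \cite{BHSS22, BHOP20, AMV14} for this). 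Now apply $\Phi_z$ a second time: since $(A_0-z)\Phi_z\varphi = 0$ in $\Omega$ and in $\mathbb{R}^q\setminus\overline\Omega$, a Green/integration-by-parts identity on each side — using that the distributional action of $A_0-z$ on the single layer produces a surface term proportional to $(\alpha\cdot\nu)[\Phi_z\varphi] = -i(\alpha\cdot\nu)^2\varphi = -i\varphi$ (the jump of the layer potential itself), and that the single layer of that surface term is again $\Phi_z$ — yields the operator identity $\mathcal{C}_z(\alpha\cdot\nu)\mathcal{C}_z(\alpha\cdot\nu) = -\tfrac14 I_4$ after collecting the $\pm$ contributions. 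The cleanest route is perhaps: average the two jump relations to get $\mathcal{C}_z\varphi = \tfrac12(\Phi_z^+ + \Phi_z^-)\varphi$ and subtract to get $-i(\alpha\cdot\nu)\varphi = (\Phi_z^+ - \Phi_z^-)\varphi$; then compute $(\mathcal{C}_z(\alpha\cdot\nu))^2$ by plugging $\psi = (\alpha\cdot\nu)\varphi$ into the second layer potential and using the Plemelj relations once more on $\Phi_z\psi$. One must track signs carefully because $(\alpha\cdot\nu)^2 = I_4$ (resp. $(\sigma\cdot\nu)^2 = I_2$) and because the anticommutation of $\alpha\cdot\nu$ with the tangential part of the symbol of $\mathcal{C}_z$ is what makes $\mathcal{C}_z(\alpha\cdot\nu)$ square to a scalar rather than something worse.

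\textbf{Main obstacle.} The delicate point in (iii) is making the formal computation with the jump relations rigorous at the level of the \emph{principal-value} operator $\mathcal{C}_z$ and, in particular, justifying that the composition $\mathcal{C}_z \circ (\alpha\cdot\nu) \circ \mathcal{C}_z$ is computed correctly — i.e., that there is no extra lower-order contribution hidden in the product of two strongly singular operators. I would handle this either by working entirely with the non-principal-value operators $\Phi_z^\pm$ (whose compositions are unambiguous once one knows $A_0-z$ kills $\Phi_z\varphi$ away from $\Sigma$ and controls the trace) and only at the very end re-expressing the result through $\mathcal{C}_z = \tfrac12(\Phi_z^+ + \Phi_z^-)$, or, in the pseudodifferential framework of Sections~\ref{section_2d} and~\ref{section_3d}, by computing the full symbol of $\mathcal{C}_z$ and verifying the identity at the symbol level (where $(\alpha\cdot\nu)^2 = I_4$ trivially collapses the leading symbol). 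Either way the identities for $(\mathcal{C}_z(\alpha\cdot\nu))^2$ and $((\alpha\cdot\nu)\mathcal{C}_z)^2$ follow from one another by the symmetry $\mathcal{C}_z^* = \mathcal{C}_{\overline z}$ together with $(\alpha\cdot\nu)^* = \alpha\cdot\nu$ and the fact that the scalar $-\tfrac14 I$ is independent of $z$, so it suffices to prove one of the two.
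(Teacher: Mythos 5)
Your proposal is correct in substance but proceeds along a genuinely different route from the paper, which proves all three items essentially by citation: (i) is taken from \cite[Proposition~3.3]{BHOP20} ($q=2$) and \cite[Theorem~4.1]{BBZ22} ($q=3$), while (ii) and (iii) are quoted from \cite{BHSS22} for $s\in[-\tfrac12,\tfrac12]$ and then extended to all $s\in\mathbb{R}$ by density/continuity, respectively by restriction for $s>\tfrac12$ and duality for $s<-\tfrac12$. You instead re-derive (ii) from the kernel symmetry $G_{z,q}(x)^*=G_{\overline z,q}(-x)$ (which is indeed correct with the branch convention $\mathrm{Im}\,\sqrt{w}>0$, since $\overline{\sqrt{w}}=-\sqrt{\overline w}$ makes the scalar factors in \eqref{def_G_lambda} go over into their $\overline z$-counterparts while $i\,\alpha\cdot x$, resp.\ $\sigma\cdot x$, picks up the sign change), and you re-derive (iii) from the Plemelj--Sokhotski jump relations together with the reproducing (Calder\'on-projector) formula for solutions of $(A_0-z)u=0$ in $\Omega$ and in $\mathbb{R}^q\setminus\overline\Omega$; this is precisely the argument used in the sources the paper cites, so your plan reconstructs the underlying proofs rather than quoting them, at the price of more work. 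Two caveats you should make explicit to close the argument: first, in (ii) Fubini cannot be applied directly to the strongly singular kernel; argue with the symmetrically truncated operators (the truncation $|x-y|>\varepsilon$ is symmetric in $x,y$, so the duality identity holds exactly before the limit) and then use the known $L^2$-convergence of the principal value. Second, both the jump relations and your ``second application of $\Phi_z$'' are available for densities in $L^2$ or $H^{1/2}$, so (ii) and (iii) are first obtained there; to get them for every $s\in\mathbb{R}$ you still need the same density/continuity (and duality) extension the paper performs, using the boundedness from (i) --- your closing remark that the two identities in (iii) imply each other is fine (even more simply, conjugation by $\alpha\cdot\nu$, which is an involution, turns $(\mathcal{C}_z(\alpha\cdot\nu))^2$ into $((\alpha\cdot\nu)\mathcal{C}_z)^2$), but the extension in $s$ should not be skipped.
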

\begin{proof}
  The mapping properties of $\mathcal{C}_z$ in item~(i) follow from \cite[Proposition~3.3]{BHOP20} for $q=2$ and \cite[Theorem~4.1]{BBZ22} for $q=3$. 
  The claim in statement~(ii) is shown for $s \in [-\frac{1}{2}, \frac{1}{2}]$ in \cite[Proposition~4.4]{BHSS22} taking into account that the spaces $H^s_\alpha(\Sigma; \mathbb{C}^N)$ in \cite{BHSS22} coincide for $C^\infty$-smooth $\Sigma$ with $H^s(\Sigma; \mathbb{C}^N)$. Together with this a simple density and continuity argument yields the claim for  $s \notin [-\frac{1}{2}, \frac{1}{2}]$.
  Similarly, assertion~(iii) can be found for $s \in [-\frac{1}{2}, \frac{1}{2}]$ in \cite[equations~(4.6) and (4.24)]{BHSS22}, the  statement for general $s \in \mathbb{R}$ follows then by restriction for $s > \frac{1}{2}$ and duality for $s < -\frac{1}{2}$.
\end{proof}

\section{A more detailed analysis in dimension $q=2$} \label{section_2d}

In this section we investigate $\mathcal{C}_z$ in space dimension $q=2$ in a more detailed way. For this, we make use of the theory of periodic pseudodifferential operators. Their definition and some basic properties are recalled in Subsection~\ref{section_PPDO}. With the help of these results we analyze $\mathcal{C}_z$ in Subsection~\ref{section_C_z_2d}.

\subsection{Periodic pseudodifferential operators} \label{section_PPDO}

In this subsection we follow closely the short exposition on periodic pseudodifferential operators in \cite[Section~2.1]{BHOP20}, for a more comprehensive presentation see \cite{SV}. 
In order to define periodic pseudodifferential operators, some notations are necessary. We set $\mathbb{T} := \mathbb{R} / \mathbb{Z}$ and denote by $\mathcal{D}(\mathbb{T})$ the set of all $1$-periodic test functions and by $\mathcal{D}'(\mathbb{T})$ the set of all $1$-periodic distributions.  We will often make use of the functions $e_n \in \mathcal{D}(\mathbb{T})$ defined by
\begin{equation} \label{def_e_n}
  e_n(t) := e^{2 \pi i n t}, \qquad n \in \mathbb{Z}.
\end{equation}
For $f \in \mathcal{D}'(\mathbb{T})$ we introduce the Fourier coefficients by
\begin{equation} \label{def_Fourier_coeff}
  \widehat{f}(n) := \langle f, e_{-n} \rangle_{\mathcal{D}'(\mathbb{T}) \times \mathcal{D}(\mathbb{T})},
\end{equation}
where $\langle \cdot, \cdot \rangle_{\mathcal{D}'(\mathbb{T}) \times \mathcal{D}(\mathbb{T})}$ denotes the bilinear duality product in $\mathcal{D}'(\mathbb{T}) \times \mathcal{D}(\mathbb{T})$. Set 
\begin{equation*}
  \underline{n} := \begin{cases} 1, &\text{ if }n=0, \\ |n|, &\text{ if } n \in \mathbb{Z} \setminus \{ 0 \}. \end{cases}
\end{equation*}
With the help of the Fourier coefficients one can define for $s \in \mathbb{R}$ the Sobolev space $H^s(\mathbb{T})$ of order $s$ on $\mathbb{T}$ by
\begin{equation} \label{def_Sobolev_space_interval}
  H^s(\mathbb{T}) := \left\{ f \in \mathcal{D}'(\mathbb{T}): \sum_{n \in \mathbb{Z}} \underline{n}^{2 s} |\widehat{f}(n)|^2  < \infty \right\}
\end{equation}
and endow it with the natural inner product
\begin{equation*}
  (f, g)_{H^s(\mathbb{T})} := \sum_{n \in \mathbb{Z}} \underline{n}^{2 s} \widehat{f}(n) \overline{\widehat{g}(n)}, \qquad u, v \in H^s(\mathbb{T}).
\end{equation*}

The Fourier coefficients in~\eqref{def_Fourier_coeff} allow us also to establish periodic pseudodifferential operators.

\begin{definition}
  A linear map $A$ defined on $\mathcal{D}(\mathbb{T})$ is called periodic pseudodifferential operator of order $s \in \mathbb{R}$, if there exists a function $h: \mathbb{T} \times \mathbb{Z} \rightarrow \mathbb{C}$ such that the following holds:
  \begin{itemize}
    \item[(i)] For any fixed $n \in \mathbb{Z}$ one has $h(\cdot, n) \in \mathcal{D}(\mathbb{T})$.
    \item[(ii)] The action of $A$ is given by
    \begin{equation*}
      A f = \sum_{n \in \mathbb{Z}} h(\cdot, n) \widehat{f}(n) e_n
    \end{equation*}
    with the sum converging in $\mathcal{D}'(\mathbb{T})$.
    \item[(iii)] For all $p, q \in \mathbb{N}_0$ there exists $c_{p,q} > 0$ such that for all $n \in \mathbb{Z}$ and $t \in \mathbb{R}$
    \begin{equation*}
      \left| \left( \frac{\textup{d}^p}{\textup{d} t^p} \omega^q h\right) (t,n) \right| \leq c_{p,q} (1+|n|)^{s-q}
    \end{equation*}
    holds,
    where $\omega h(t,n) := h(t,n+1)-h(t,n)$.
  \end{itemize}
  The set of all periodic pseudodifferential operators of order $s$ is denoted by $\Psi^s$. Moreover, we set $\Psi^{-\infty} := \bigcap_{s \in \mathbb{R}} \Psi^s$.
\end{definition}

In the following let $\Sigma \subset \mathbb{R}^2$ be the boundary of a simply connected $C^\infty$-domain, let $\ell$ be the length of $\Sigma$, and let $\gamma: [0, \ell] \rightarrow \mathbb{R}^2$ be an arc-length parametrization of $\Sigma$ that is orientated in the counter clockwise way. Similarly as above, we denote by $\mathcal{D}(\Sigma)$ the set of all $C^\infty$-smooth functions defined on $\Sigma$ and by $\mathcal{D}'(\Sigma)$ the set of all distributions on $\Sigma$. In order to define periodic pseudodifferential operators on $\Sigma$, we introduce the mapping $U: \mathcal{D}'(\Sigma) \rightarrow \mathcal{D}'(\mathbb{T})$ by
\begin{equation} \label{def_U}
  U f(\varphi) := f \big( \ell^{-1} \varphi (\ell^{-1} \gamma^{-1} (\cdot )) \big), \qquad f \in \mathcal{D}'(\Sigma), ~\varphi \in \mathcal{D}(\mathbb{T}).
\end{equation}
If $f$ is a regular distribution, then $Uf$ is also a regular distribution generated by $Uf = f(\gamma(\ell \cdot))$. With the map $U$ one can translate  the Sobolev spaces $H^s(\mathbb{T})$ on $\mathbb{T}$ to Sobolev spaces on $\Sigma$ by
\begin{equation} \label{def_H_s}
  H^s(\Sigma) := \big\{ \varphi \in \mathcal{D}'(\Sigma): U \varphi \in H^s(\mathbb{T}) \big\}, \quad s \in \mathbb{R},
\end{equation}
and endow them with the inner product
\begin{equation*}
  (\varphi, \psi)_{H^s(\Sigma)} := (U \varphi, U \psi)_{H^s(\mathbb{T})}, \qquad \varphi, \psi \in H^s(\Sigma).
\end{equation*}
By this construction the map $U$ defined in~\eqref{def_U} is for any $s \in \mathbb{R}$ a unitary operator from $H^s(\Sigma)$ to $H^s(\mathbb{T})$ and the corresponding norm is equivalent to the norm in $H^s(\Sigma)$ defined as in \cite{M00}.

\begin{definition}
  A map $A$ defined on $\mathcal{D}(\Sigma)$ is called periodic pseudodifferential operator on $\Sigma$ of order $s \in \mathbb{R}$, if there exists $A_0 \in \Psi^s$ such that $A = U^{-1} A_0 U$.
  The set of all periodic pseudodifferential operators on $\Sigma$ of order $s$ is denoted by $\Psi^s_\Sigma$. Moreover, we set $\Psi^{-\infty}_\Sigma := \bigcap_{s \in \mathbb{R}} \Psi^s_\Sigma$.
\end{definition}

An important feature of periodic pseudodifferential operators is the fact that they are automatically bounded in Sobolev spaces \cite[Theorems~7.3.1 and~7.8.1]{SV}.

\begin{lemma} \label{lemma_PPDO_mapping_properties}
  Let $A \in \Psi^s_\Sigma$ and $B \in \Psi^t_\Sigma$ for some $s, t \in \mathbb{R}$. Then, the following is true:
  \begin{itemize}
    \item[(i)] For any $r \in \mathbb{R}$ the map $A$ can be extended to a well-defined and bounded operator $A: H^r(\Sigma) \rightarrow H^{r-s}(\Sigma)$.
    \item[(ii)] $A B \in \Psi^{s+t}_\Sigma$.
    \item[(iii)] $A B - B A \in \Psi^{s+t-1}_\Sigma$.
  \end{itemize}
\end{lemma}

There are three types of periodic pseudodifferential operators that we are going to use frequently. First, if $f \in \mathcal{D}(\Sigma)$, then one has for the associated multiplication operator
\begin{equation} \label{equation_mult_op}
  \big(\mathcal{D}(\Sigma) \ni \varphi \mapsto f \cdot \varphi\big) \in \Psi_\Sigma^0.
\end{equation}

Next, consider for a constant $c_\Lambda > 0$ and $t \in \mathbb{R}$ the map 
\begin{equation*}
  L^t f := \left(\frac{4 \pi}{\ell} \right)^{t/2} \sum_{n \in \mathbb{Z}} (c_\Lambda + |n|)^{t/2} \widehat{f}(n) e_n, \qquad f \in \mathcal{D}(\mathbb{T}).
\end{equation*}
Then it is not difficult to see that $L^{t_1} L^{t_2} = L^{t_1+t_2}$ holds for all $t_1,t_2 \in \mathbb{R}$ and $L^t \in \Psi^{t/2}$. Hence, we can define
\begin{equation} \label{def_Lambda_2d}
  \Lambda^t := U^{-1} L^t U \in \Psi^{t/2}_\Sigma.
\end{equation}
Taking the definition of $H^r(\Sigma)$ in~\eqref{def_Sobolev_space_interval} and~\eqref{def_H_s} into account it is not difficult to show that for any $r \in \mathbb{R}$ the map $\Lambda^t: H^r(\Sigma) \rightarrow H^{r-t/2}(\Sigma)$ is bijective and that the realization of $\Lambda^t$ as a possibly unbounded operator in $L^2(\Sigma)$ defined on $\dom \Lambda^t = H^{\tau}(\Sigma)$ with $\tau = \max\{ \frac{t}{2}, 0\}$ is self-adjoint, cf. \cite{BHOP20}.

The third periodic pseudodifferential operator that will play a crucial role in our analysis is a multiple of the  Cauchy transform on $\Sigma$. 
We define formally the integral operator $\mathcal{R}$ acting on $\varphi \in \mathcal{D}(\Sigma)$ by
\begin{equation} \label{def_R_2d}
  \mathcal{R} \varphi(x) = -\frac{2}{\pi} \lim_{\varepsilon \searrow 0} \int_{\Sigma \setminus B(x, \varepsilon)} \frac{\nu_1(y) + i \nu_2(y)}{x_1 + i x_2 - (y_1 + i y_2)} \varphi(y) \textup{d}  \sigma(y), \quad x \in \Sigma.
\end{equation}
We also consider the formal adjoint $\mathcal{R}^*$ of $\mathcal{R}$ in $L^2(\Sigma)$, which acts on $\varphi \in \mathcal{D}(\Sigma)$ as
\begin{equation} \label{def_Cauchy_transform_dual}
  \mathcal{R}^* \varphi (x) = \frac{2}{\pi} \lim_{\varepsilon \searrow 0} \int_{\Sigma \setminus B(x, \varepsilon)} \frac{\nu_1(x) - i \nu_2(x)}{x_1 - i x_2 - (y_1 - i y_2)} \varphi(y) \textup{d}  \sigma(y), \quad x \in \Sigma.
\end{equation}
Let $C_\Sigma$ be the Cauchy transform on $\Sigma$, i.e. the strongly singular boundary integral operator that acts on $\varphi \in \mathcal{D}(\Sigma)$ evaluated at $x = x_1 + i x_2 \in \Sigma \subset \mathbb{C} \sim \mathbb{R}^2$ as
\begin{equation*}
  \begin{split}
    C_\Sigma \varphi(x) &=  -\frac{1}{i \pi} \lim_{\varepsilon \searrow 0} \int_{\Sigma \setminus B(x, \varepsilon)} \frac{1}{x - \zeta} \varphi(\zeta) \textup{d}  \zeta \\
    &= -\frac{1}{i \pi} \lim_{\varepsilon \searrow 0} \int_{[0, \ell) \setminus (\gamma^{-1}(x) - \varepsilon, \gamma^{-1}(x) + \varepsilon)} \frac{\dot{\gamma}_1(t) + i \dot{\gamma}_2(t)}{x_1 + i x_2 - (\gamma_1(t) + i \gamma_2(t))} \varphi(\gamma(t)) \textup{d} t,
  \end{split}
\end{equation*}
where the first integral is a complex line integral, cf. \cite[Section~5.9]{SV}. 
As we have $\nu(\gamma(s)) = (\dot{\gamma}_2(s), -\dot{\gamma}_1(s))$, $s \in [0, \ell)$, the map $C_\Sigma$ is related to $\mathcal{R}$ by
\begin{equation} \label{relation_R_C}
  \mathcal{R} = 2 C_\Sigma.
\end{equation}
By \cite[Proposition~2.8]{BHOP20} this implies that
\begin{equation} \label{R_PPDO}
  \mathcal{R}, \mathcal{R}^* \in \Psi_\Sigma^0.
\end{equation}
In particular, $\mathcal{R}, \mathcal{R}^*$ give rise to bounded operators in $H^s(\Sigma)$ for all $s \in \mathbb{R}$.
Since $\mathcal{R}$ and $\mathcal{R}^*$ are adjoint to each other, a continuity argument shows for all $\varphi \in H^s(\Sigma)$ and $\psi \in H^{-s}(\Sigma)$, $s \in \mathbb{R}$, that
\begin{equation}
  ( \mathcal{R} \varphi, \psi )_{H^s(\Sigma) \times H^{-s}(\Sigma)} = ( \varphi, \mathcal{R}^* \psi )_{H^s(\Sigma) \times H^{-s}(\Sigma)},
\end{equation}
where $( \cdot, \cdot )_{H^s(\Sigma) \times H^{-s}(\Sigma)}$ denotes the sesquilinear duality product.

\subsection{Analysis of $\mathcal{C}_z$} \label{section_C_z_2d}

In the following proposition, which is the starting point for our further considerations and which is a direct consequence of \cite[Proposition~3.3]{BHOP20}, we provide a link of $\mathcal{R}$ and $\mathcal{R}^*$ defined in~\eqref{def_R_2d} and~\eqref{def_Cauchy_transform_dual}, respectively, and the operator $\mathcal{C}_z$ given in~\eqref{def_C_lambda}. Recall that the map $\Lambda$ is introduced in~\eqref{def_Lambda_2d}.

\begin{proposition} \label{proposition_R_C_2d}
  Let $z \in (-m,m)$. Then there exists an operator $\mathcal{K} \in \Psi^{-2}_\Sigma$ such that 
  \begin{equation} \label{equation_C_z_2d}
    \mathcal{C}_z = \begin{pmatrix}  (z+m) \Lambda^{-2} & -\frac{i}{4} \mathcal{R} (\nu_1 - i \nu_2) \\ \frac{i}{4} (\nu_1 + i \nu_2) \mathcal{R}^* & (z-m) \Lambda^{-2} \end{pmatrix} + \mathcal{K}.
  \end{equation} 
  In particular, for any $r \in \mathbb{R}$ the map $\mathcal{K}$ gives rise to a compact operator $\mathcal{K}: H^r(\Sigma; \mathbb{C}^2) \rightarrow H^{r+1}(\Sigma; \mathbb{C}^2)$ and the realization of $\mathcal{K}$ in $L^2(\Sigma; \mathbb{C}^2)$ is self-adjoint.
\end{proposition}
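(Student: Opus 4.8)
The plan is to prove the decomposition~\eqref{equation_C_z_2d} by explicitly expanding the integral kernel $G_{z,2}(x-y)$ for $z \in (-m,m)$ in the regime where $\sqrt{z^2-m^2}$ is purely imaginary, and matching each block against the operators already identified as elements of $\Psi^0_\Sigma$ or $\Psi^{-2}_\Sigma$. Set $\kappa := -i\sqrt{z^2-m^2}$, so $\kappa \in (0,\infty)$, and recall that near the diagonal the Bessel functions behave like $K_0(\kappa r) = -\log r + O(1)$ and $\kappa K_1(\kappa r) = r^{-1} + O(r\log r)$ as $r = |x-y| \to 0$. Thus the off-diagonal blocks of $G_{z,2}$ carry the Cauchy-type singular kernel $\frac{\sigma\cdot(x-y)}{|x-y|^2}$, while the diagonal blocks carry only the weakly singular logarithmic kernel times the scalars $z+m$ and $z-m$. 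This already suggests that the off-diagonal entries are bounded ($\Psi^0_\Sigma$) operators built from the Cauchy transform, and the diagonal entries are smoothing of order $1$, i.e. coincide up to $\Psi^{-2}_\Sigma$ with $(z\pm m)\Lambda^{-2}$.

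The concrete steps I would carry out are: (1) Invoke \cite[Proposition~3.3]{BHOP20}, which the authors cite as the source — this presumably already expresses the off-diagonal part of $\mathcal{C}_z$ in terms of the Cauchy transform $C_\Sigma$ (hence $\mathcal{R}$, $\mathcal{R}^*$ via~\eqref{relation_R_C}) together with weakly singular remainders, and the diagonal part in terms of the logarithmic single-layer kernel. (2) Identify the scalar logarithmic single-layer operator with $\Lambda^{-2}$ modulo $\Psi^{-2}_\Sigma$: this is the classical fact that the principal symbol of the periodic single-layer operator for $-\Delta$ (the operator with kernel $-\frac{1}{2\pi}\log|x-y|$) is $\frac{1}{2|\xi|}$, matching the principal symbol of $\tfrac12 \Lambda^{-2}$ after the normalisation in~\eqref{def_Lambda_2d}; the difference lies in $\Psi^{-2}_\Sigma$ by Lemma~\ref{lemma_PPDO_mapping_properties}(ii)--(iii) together with the symbol calculus, and multiplication by the smooth functions $z\pm m$ (here constants) stays in $\Psi^0_\Sigma$ by~\eqref{equation_mult_op}. (3) Collect all error terms — the $O(1)$ and $O(r\log r)$ Bessel remainders, the difference between the Cauchy kernel written in the ambient coordinates and the intrinsic one, and the mismatch of the logarithmic operator with $\Lambda^{-2}$ — into a single operator $\mathcal{K}$, and check that each piece is a periodic pseudodifferential operator of order $\le -2$ on $\Sigma$, so that $\mathcal{K}\in\Psi^{-2}_\Sigma$.

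Once $\mathcal{K}\in\Psi^{-2}_\Sigma$ is established, the concluding assertions are immediate: by Lemma~\ref{lemma_PPDO_mapping_properties}(i), $\mathcal{K}$ extends to a bounded map $H^r(\Sigma;\mathbb{C}^2)\to H^{r+1}(\Sigma;\mathbb{C}^2)$ — actually to $H^{r+2}(\Sigma;\mathbb{C}^2)$, which a fortiori gives the stated $H^{r+1}$ bound — and since the embedding $H^{r+1}(\Sigma;\mathbb{C}^2)\hookrightarrow H^r(\Sigma;\mathbb{C}^2)$ is compact (compactness of the embedding $H^{s+\varepsilon}(\Sigma)\hookrightarrow H^s(\Sigma)$ on a compact manifold), $\mathcal{K}: H^r(\Sigma;\mathbb{C}^2)\to H^{r+1}(\Sigma;\mathbb{C}^2)$ followed by this embedding is compact; in particular the $L^2(\Sigma;\mathbb{C}^2)$ realization of $\mathcal{K}$ is compact. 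Self-adjointness of the $L^2$ realization of $\mathcal{K}$ follows by subtracting the explicit matrix operator in~\eqref{equation_C_z_2d} from $\mathcal{C}_z$: for real $z\in(-m,m)$ one has $\mathcal{C}_z^*=\mathcal{C}_{\bar z}=\mathcal{C}_z$ by Proposition~\ref{proposition_C_z_basic_properties}(ii), $\Lambda^{-2}$ is self-adjoint in $L^2(\Sigma)$ by its construction in~\eqref{def_Lambda_2d}, the scalars $z\pm m$ are real, and the off-diagonal block $-\tfrac{i}{4}\mathcal{R}(\nu_1-i\nu_2)$ has $L^2$-adjoint $\tfrac{i}{4}\overline{(\nu_1-i\nu_2)}\,\mathcal{R}^* = \tfrac{i}{4}(\nu_1+i\nu_2)\mathcal{R}^*$, which is exactly the $(2,1)$ entry; hence the explicit matrix is self-adjoint, and so is $\mathcal{K}=\mathcal{C}_z-(\text{explicit matrix})$.

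The main obstacle I expect is step (2): pinning down the precise order-$(-2)$ accuracy of the approximation of the scalar logarithmic single-layer operator by $\Lambda^{-2}$ and of the geometric (ambient-versus-intrinsic) rewriting of the Cauchy kernel. One must be careful that the Jacobian factors from the arc-length parametrisation $\gamma$ and the Taylor expansion of $|\gamma(s)-\gamma(t)|$ versus the geodesic distance $|s-t|$ on $\mathbb{T}$ only contribute $C^\infty$ (hence $\Psi^{-\infty}_\Sigma$) corrections to the principal parts, while the genuine loss of regularity is controlled by the $\omega^q$-difference estimates in the definition of $\Psi^s$; here the cleanest route is to lean on \cite[Proposition~2.8]{BHOP20} and \cite[Sections~5.9,~7.3,~7.8]{SV}, which already package the symbol calculus and the identification $\mathcal{R},\mathcal{R}^*\in\Psi^0_\Sigma$, and on the known symbol of the periodic logarithmic kernel, rather than redoing these expansions from scratch.
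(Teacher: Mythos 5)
Your proposal takes essentially the same route as the paper: the block decomposition is imported from \cite[Proposition~3.3]{BHOP20} via the identifications $\mathcal{R}=2C_\Sigma$ in \eqref{relation_R_C} and $\Lambda=(4\pi/\ell)^{1/2}\widetilde{\Lambda}$, compactness follows from the $H^r\rightarrow H^{r+2}$ boundedness of an operator in $\Psi^{-2}_\Sigma$ combined with Rellich's theorem, and self-adjointness of $\mathcal{K}$ in $L^2(\Sigma;\mathbb{C}^2)$ follows from that of $\mathcal{C}_z$ together with the (in your write-up more explicit) symmetry of the leading matrix. Two small corrections: the principal symbol $\tfrac{1}{2|\xi|}$ of the logarithmic single layer matches that of $\Lambda^{-2}$ itself, not of $\tfrac12\Lambda^{-2}$, and to get compactness of $\mathcal{K}:H^r(\Sigma;\mathbb{C}^2)\rightarrow H^{r+1}(\Sigma;\mathbb{C}^2)$ (rather than merely in $H^r$ or $L^2$) you should compose the bounded map $H^r\rightarrow H^{r+2}$ with the compact embedding $H^{r+2}\hookrightarrow H^{r+1}$, which is exactly the argument in the paper.
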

\begin{proof}
  Denote the operator defined in \cite[equation~(2.7)]{BHOP20} by $\widetilde{\Lambda}$. 
  Then one easily sees that the map $\Lambda$ in~\eqref{def_Lambda_2d} satisfies $\Lambda = \big(\frac{4 \pi}{\ell}\big)^{1/2} \widetilde{\Lambda}$.
  Moreover, the tangential vector $(\dot{\gamma}_1, \dot{\gamma}_2)$ on $\Sigma$ and the normal vector are related by $\nu(\gamma(s)) = (\dot{\gamma}_2(s), -\dot{\gamma}_1(s))$, $s \in [0, \ell]$.  
  Hence, the representation in~\eqref{equation_C_z_2d} follows from~\eqref{relation_R_C} and \cite[Proposition~3.3]{BHOP20}. Moreover, as $\mathcal{K}: H^r(\Sigma; \mathbb{C}^2) \rightarrow H^{r+2}(\Sigma; \mathbb{C}^2)$ is bounded for any $r \in \mathbb{R}$ by  Lemma~\ref{lemma_PPDO_mapping_properties} and $H^{r+2}(\Sigma; \mathbb{C}^2)$ is compactly embedded in $H^{r+1}(\Sigma; \mathbb{C}^2)$ by Rellich's embedding theorem, $\mathcal{K}$ is compact from $H^r(\Sigma; \mathbb{C}^2)$ to $H^{r+1}(\Sigma; \mathbb{C}^2)$. Eventually, since the realization of $\mathcal{C}_z$ in $L^2(\Sigma; \mathbb{C}^2)$ is self-adjoint for $z \in (-m,m)$ by Proposition~\ref{proposition_C_z_basic_properties}~(ii), the map $\mathcal{K}$ must be self-adjoint in $L^2(\Sigma; \mathbb{C}^2)$ as well. Hence, all claims are shown.
\end{proof}

In the following lemma we state a variant of \eqref{equation_C_z_2d} that is particularly useful in the application in the boundary triple framework, as it is done, e.g., in \cite{BHSS22}. Define the matrix $V \in \mathbb{C}^{2 \times 2}$ by
\begin{equation} \label{def_V_2d}
  V = \begin{pmatrix} 1 & 0 \\ 0 & -i (\nu_1-i\nu_2) \end{pmatrix}.
\end{equation}
Then we have the following result:

\begin{lemma} \label{corollary_R_C_bt_2d}
  Let $\mathcal{R}$ and $\mathcal{R}^*$ be defined by~\eqref{def_R_2d} and~\eqref{def_Cauchy_transform_dual}, respectively, let $\mathcal{C}_z$, $z \in (-m,m)$, be given by~\eqref{def_C_lambda}, let $\Lambda$ be as in~\eqref{def_Lambda_2d}, and let $s \in \mathbb{R}$. Then there exists an operator $\mathcal{K}$ that is compact in $H^s(\Sigma; \mathbb{C}^2)$ such that the bounded and everywhere defined operator 
  \begin{equation*}
    4 \Lambda V (\sigma \cdot \nu) \mathcal{C}_z (\sigma \cdot \nu) V^*\Lambda: H^s(\Sigma; \mathbb{C}^2) \rightarrow H^{s-1}(\Sigma; \mathbb{C}^2)
  \end{equation*}
  can be written as
  \begin{equation} 
    4 \Lambda V (\sigma \cdot \nu) \mathcal{C}_z (\sigma \cdot \nu) V^*\Lambda = -\Lambda \begin{pmatrix} 0 & \mathcal{R}^* \\ \mathcal{R} & 0 \end{pmatrix} \Lambda + 4 \begin{pmatrix} z - m & 0 \\ 0 & z+m \end{pmatrix} + \mathcal{K}.
  \end{equation} 
  In particular, the realization of $\mathcal{K}$ in $L^2(\Sigma; \mathbb{C}^2)$ is self-adjoint.
\end{lemma}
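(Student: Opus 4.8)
The plan is to start from the representation of $\mathcal{C}_z$ established in Proposition~\ref{proposition_R_C_2d} and conjugate it by $2(\sigma\cdot\nu)$, then by $V^*\Lambda$ on the right and $\Lambda V$ on the left, tracking all error terms carefully and collecting everything of order $-1$ or lower into a single operator $\mathcal{K}$ that will turn out to be compact in $H^s(\Sigma;\mathbb{C}^2)$. First I would record the explicit form of $\sigma\cdot\nu$ in dimension two: using the Pauli matrices in~\eqref{Dirac_matrices1}, $\sigma\cdot\nu = \nu_1\sigma_1+\nu_2\sigma_2 = \left(\begin{smallmatrix} 0 & \nu_1 - i\nu_2 \\ \nu_1 + i\nu_2 & 0\end{smallmatrix}\right)$, which is a self-adjoint unitary (since $\nu_1^2+\nu_2^2=1$) and swaps the two components of $\mathbb{C}^2$ with scalar multiplication by the unimodular functions $\nu_1\mp i\nu_2$. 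Plugging~\eqref{equation_C_z_2d} into $2(\sigma\cdot\nu)\mathcal{C}_z$ gives, up to the term $2(\sigma\cdot\nu)\mathcal{K}$ which is order $-2$, an explicit $2\times2$ matrix of periodic pseudodifferential operators whose off-diagonal entries involve $\pm\tfrac{i}{2}$ times $(\nu_1\pm i\nu_2)\mathcal{R}(\nu_1-i\nu_2)$ and $(\nu_1 \mp i\nu_2)\mathcal{R}^*$, and whose diagonal entries involve $(z\pm m)(\nu_1 \mp i\nu_2)\Lambda^{-2}$.

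The key point in the middle of the argument is that all the multiplication operators by the smooth unimodular functions $\nu_1\pm i\nu_2$ can be moved past $\Lambda$ and past $\mathcal{R},\mathcal{R}^*$ at the cost of commutators that drop the order by one, by Lemma~\ref{lemma_PPDO_mapping_properties}(iii) and the fact that multiplication operators lie in $\Psi^0_\Sigma$ by~\eqref{equation_mult_op}. Concretely, after conjugating by $V^*\Lambda$ and $\Lambda V$ and expanding, every appearance of a product like $\Lambda (\nu_1 - i\nu_2)$ or $(\nu_1 - i\nu_2)\mathcal{R}(\nu_1 - i\nu_2)$ is replaced, modulo $\Psi^{-1}_\Sigma$-errors, by the commuted version; since the surrounding operators are of order at most $1$, the resulting errors are of order at most $1$, hence map $H^s(\Sigma;\mathbb{C}^2)\to H^{s-1}(\Sigma;\mathbb{C}^2)$ — but to get \emph{compactness} in $H^s$ I instead want the errors to be of order $\le -1$, i.e. to gain a derivative relative to $H^s$. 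This is where one must be slightly careful: the two $\Lambda$'s each contribute order $\tfrac12$, the off-diagonal $\mathcal{R},\mathcal{R}^*$ contribute order $0$, so the principal (order-$1$) part is exactly $-\Lambda\left(\begin{smallmatrix}0 & \mathcal{R}^*\\ \mathcal{R} & 0\end{smallmatrix}\right)\Lambda$; the diagonal principal part, $\Lambda\cdot(z\mp m)\Lambda^{-2}\cdot\Lambda$ times the unimodular factors, is order $0$ and, after commuting the unimodular factors through $\Lambda^{-2}\Lambda^2 = I$ and cancelling $(\nu_1-i\nu_2)$ against its inverse, collapses to the constant matrix $4\left(\begin{smallmatrix}z-m & 0\\ 0 & z+m\end{smallmatrix}\right)$ up to an order-$(-1)$ remainder. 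Everything left over is a periodic pseudodifferential operator of order $\le -1$ on $\mathbb{C}^2$, hence maps $H^s(\Sigma;\mathbb{C}^2)$ boundedly into $H^{s+1}(\Sigma;\mathbb{C}^2)$, which embeds compactly into $H^s(\Sigma;\mathbb{C}^2)$ by Rellich's theorem; this defines $\mathcal{K}$.

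For the final self-adjointness claim I would argue exactly as in the proof of Proposition~\ref{proposition_R_C_2d}: for $z\in(-m,m)$ the operator $\mathcal{C}_z$ is self-adjoint in $L^2(\Sigma;\mathbb{C}^2)$ by Proposition~\ref{proposition_C_z_basic_properties}(ii), $(\sigma\cdot\nu)$ is a self-adjoint matrix, $\Lambda$ is self-adjoint on its natural domain by the discussion after~\eqref{def_Lambda_2d}, and $V^* = \left(\begin{smallmatrix}1 & 0\\ 0 & i(\nu_1 + i\nu_2)\end{smallmatrix}\right)$ so that $\Lambda V(\sigma\cdot\nu)\mathcal{C}_z(\sigma\cdot\nu)V^*\Lambda$ is (formally) symmetric; since the explicit principal term $-\Lambda\left(\begin{smallmatrix}0 & \mathcal{R}^*\\ \mathcal{R} & 0\end{smallmatrix}\right)\Lambda + 4\left(\begin{smallmatrix}z-m & 0\\ 0 & z+m\end{smallmatrix}\right)$ is manifestly self-adjoint in $L^2(\Sigma;\mathbb{C}^2)$ (using that $\mathcal{R}^*$ is the $L^2$-adjoint of $\mathcal{R}$ as recorded after~\eqref{R_PPDO}), the remainder $\mathcal{K}$ must be self-adjoint as well. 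The main obstacle I anticipate is purely bookkeeping: one has to match conventions with~\cite{BHOP20} — in particular the factor $\big(\tfrac{4\pi}{\ell}\big)^{1/2}$ relating the two $\Lambda$'s noted in the proof of Proposition~\ref{proposition_R_C_2d}, and the placement of the unimodular weights $\nu_1\pm i\nu_2$ relative to $\mathcal{R}$ versus $\mathcal{R}^*$ — so that the off-diagonal terms really do combine into the symmetric block $\left(\begin{smallmatrix}0 & \mathcal{R}^*\\ \mathcal{R} & 0\end{smallmatrix}\right)$ with the stated sign, and verifying that no order-$1$ term survives on the diagonal. None of this is deep, but it is where an error would most likely creep in.
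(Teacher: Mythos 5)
Your proposal is correct and follows essentially the same route as the paper, whose proof is exactly this direct calculation starting from Proposition~\ref{proposition_R_C_2d}, using the identity $V(\sigma\cdot\nu)=\left(\begin{smallmatrix} 0 & \nu_1-i\nu_2\\ -i & 0\end{smallmatrix}\right)$ together with Lemma~\ref{lemma_PPDO_mapping_properties}~(iii) and~\eqref{equation_mult_op}, and obtaining self-adjointness of $\mathcal{K}$ from Proposition~\ref{proposition_C_z_basic_properties}~(ii). The bookkeeping you defer does work out, and in the way your compactness count requires: writing $a=\nu_1-i\nu_2$, in the off-diagonal entries the weights cancel exactly ($a\bar a=1$ as adjacent multiplication operators, no commutator with $\mathcal{R}$ or $\mathcal{R}^*$ is needed there, which is essential since such an error sandwiched between the two $\Lambda$'s would only be of order $0$ and hence not compact in $H^s$), so the only place Lemma~\ref{lemma_PPDO_mapping_properties}~(iii) is invoked is the $(z-m)$-diagonal entry, where $\Lambda[a,\Lambda^{-2}]\bar a\Lambda\in\Psi^{-1}_\Sigma$ is compact in $H^s(\Sigma;\mathbb{C}^2)$ by Rellich, as is $4\Lambda V(\sigma\cdot\nu)\mathcal{K}(\sigma\cdot\nu)V^*\Lambda$ with the $\Psi^{-2}_\Sigma$-remainder from Proposition~\ref{proposition_R_C_2d}.
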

\begin{proof}
  The claim follows immediately from Proposition~\ref{proposition_R_C_2d} and a direct calculation taking the identity
  \begin{equation*}
    V (\sigma \cdot \nu) = \begin{pmatrix} 0 & \nu_1 - i \nu_2 \\ -i & 0 \end{pmatrix},
  \end{equation*}
  Lemma~\ref{lemma_PPDO_mapping_properties}~(iii), and~\eqref{equation_mult_op} into account.
\end{proof}

In the following proposition some further properties of $\mathcal{R}$ and $\mathcal{R}^*$ that follow from known results about the Cauchy transform (cf. \cite[Section~4.1.3]{SV}),~\eqref{relation_R_C},  Lemma~\ref{lemma_PPDO_mapping_properties}, and \cite[Proposition~2.8]{BHOP20} are stated.

\begin{proposition} \label{proposition_properties_R_2d}
  Let $\mathcal{R}$ and $\mathcal{R}^*$ be defined by~\eqref{def_R_2d} and~\eqref{def_Cauchy_transform_dual}, respectively. Then, the following holds:
  \begin{itemize}
    \item[(i)] $\mathcal{R} - \mathcal{R}^* \in \Psi^{-\infty}_\Sigma$. In particular, for any $s, t \in \mathbb{R}$ the map $\mathcal{R} - \mathcal{R}^*: H^s(\Sigma) \rightarrow H^t(\Sigma)$ is compact.
    \item[(ii)] One has $\mathcal{R}^2  = 4 I_{2}$ and $(\mathcal{R}^*)^2  = 4 I_{2}$. In particular, $\mathcal{R}  \mathcal{R}^* - 4 I_{2} \in \Psi^{-\infty}_\Sigma$ and for all $s, t \in \mathbb{R}$ the operator $\mathcal{R}  \mathcal{R}^* - 4 I_{2}: H^s(\Sigma) \rightarrow H^t(\Sigma)$ is compact.
    \item[(iii)] $\Lambda^{-2} \mathcal{R} - \mathcal{R}  \Lambda^{-2} \in \Psi^{-2}_\Sigma$.
  \end{itemize}
\end{proposition}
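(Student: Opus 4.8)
The plan is to reduce the whole proposition to two classical facts about the Cauchy transform $C_\Sigma$ on a $C^\infty$-smooth closed curve — namely that it is an involution and that it differs from its $L^2(\Sigma)$-adjoint by a smoothing operator, both recorded in \cite[Section~4.1.3]{SV} — and then to propagate these through the pseudodifferential calculus collected in Lemma~\ref{lemma_PPDO_mapping_properties}.

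For item~(i) I would start from $\mathcal{R} = 2 C_\Sigma$ in~\eqref{relation_R_C}, which also gives $\mathcal{R}^* = 2 C_\Sigma^*$ for the $L^2(\Sigma)$-adjoint. The key input is that $C_\Sigma$ and $C_\Sigma^*$ differ only by an integral operator with a $C^\infty$ kernel on $\Sigma\times\Sigma$; one sees this by writing the difference of the integral kernels of $\mathcal{R}$ and $\mathcal{R}^*$ in the arc-length parametrization, using $\nu_1+i\nu_2 = -i(\dot{\gamma}_1+i\dot{\gamma}_2)$, and checking that the leading $1/(s-t)$ singularities cancel, which is precisely the content of \cite[Section~4.1.3]{SV}. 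Since an integral operator on $\Sigma$ with $C^\infty$ kernel belongs to $\Psi^{-\infty}_\Sigma$, this yields $\mathcal{R}-\mathcal{R}^* = 2(C_\Sigma - C_\Sigma^*) \in \Psi^{-\infty}_\Sigma$. The ``in particular'' part is then automatic: for fixed $s,t$ the operator lies in $\Psi^{s-t-1}_\Sigma$, hence maps $H^s(\Sigma)$ boundedly into $H^{t+1}(\Sigma)$ by Lemma~\ref{lemma_PPDO_mapping_properties}~(i), and $H^{t+1}(\Sigma)$ embeds compactly into $H^t(\Sigma)$ by Rellich's theorem.

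For item~(ii), $\mathcal{R}^2 = 4 I_2$ is the classical involution property $C_\Sigma^2 = I$ (a direct consequence of the Plemelj formulas, as $\frac12(I\pm C_\Sigma)$ are complementary projections) rescaled by~\eqref{relation_R_C}; taking $L^2(\Sigma)$-adjoints then gives $(\mathcal{R}^*)^2 = (\mathcal{R}^2)^* = 4 I_2$. To obtain $\mathcal{R}\mathcal{R}^* - 4 I_2 \in \Psi^{-\infty}_\Sigma$ I would write $\mathcal{R}^* = \mathcal{R} - S$ with $S := \mathcal{R}-\mathcal{R}^* \in \Psi^{-\infty}_\Sigma$ from item~(i), so that $\mathcal{R}\mathcal{R}^* = \mathcal{R}^2 - \mathcal{R}S = 4 I_2 - \mathcal{R}S$; since $\mathcal{R}\in\Psi^0_\Sigma$, Lemma~\ref{lemma_PPDO_mapping_properties}~(ii) gives $\mathcal{R}S\in\Psi^{-\infty}_\Sigma$, whence $\mathcal{R}\mathcal{R}^* - 4 I_2 = -\mathcal{R}S\in\Psi^{-\infty}_\Sigma$, and compactness of $\mathcal{R}\mathcal{R}^* - 4 I_2: H^s(\Sigma)\to H^t(\Sigma)$ follows exactly as in item~(i).

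Item~(iii) is a one-line application of the commutator bound in the calculus: $\Lambda^{-2}\in\Psi^{-1}_\Sigma$ by~\eqref{def_Lambda_2d} and $\mathcal{R}\in\Psi^0_\Sigma$ by~\eqref{R_PPDO}, so Lemma~\ref{lemma_PPDO_mapping_properties}~(iii) immediately yields $\Lambda^{-2}\mathcal{R} - \mathcal{R}\Lambda^{-2}\in\Psi^{-1+0-1}_\Sigma = \Psi^{-2}_\Sigma$. The only genuinely non-formal ingredient in the whole proposition is the smoothness of the difference kernel underlying item~(i); granting that, items~(ii) and~(iii) are pure bookkeeping inside $\Psi^\bullet_\Sigma$.
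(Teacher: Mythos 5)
Your proposal is correct and follows exactly the route the paper intends: the paper states this proposition without a separate proof, merely citing the known Cauchy-transform facts from \cite[Section~4.1.3]{SV}, the relation $\mathcal{R}=2C_\Sigma$ in~\eqref{relation_R_C}, the calculus of Lemma~\ref{lemma_PPDO_mapping_properties}, and \cite[Proposition~2.8]{BHOP20}, which are precisely the ingredients you use. Your filling in of the details (smoothing difference $C_\Sigma-C_\Sigma^*$ for (i), the involution $C_\Sigma^2=I$ plus the factorization $\mathcal{R}\mathcal{R}^*-4I_2=-\mathcal{R}(\mathcal{R}-\mathcal{R}^*)$ for (ii), and the commutator order count for (iii)) is accurate.
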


Eventually, we show that $\mathcal{R} + \mathcal{R}^*$ can be written as the difference of two projections, which is useful in the analysis of boundary value and transmission problems for the Dirac equation with critical combinations of the coefficients, see, e.g., \cite{BHSS22} for an application. In order to formulate the result, we define the operators $P_\pm \in \Psi^0_\Sigma$ by
\begin{equation} \label{def_P_pm}
  U P_+ U^{-1} f = \sum_{n=0}^\infty \widehat{f}(n) e_n \quad \text{and} \quad U P_- U^{-1} f = \sum_{n=1}^\infty \widehat{f}(-n) e_{-n}, \qquad f \in \mathcal{D}(\mathbb{T}),
\end{equation}
where $U$ is the map in \eqref{def_U}. Remark that $P_\pm$ are orthogonal projections in $H^s(\Sigma)$ defined by~\eqref{def_H_s} for all $s \in \mathbb{R}$ and with $\mathcal{H}_\pm := \ran P_\pm$ one has $H^s(\Sigma) = \mathcal{H}_+ \oplus \mathcal{H}_-$, $\dim \mathcal{H}_\pm = \infty$, and $\mathcal{H}_\pm \not\subset H^r(\Sigma)$ for any $r > s$. Moreover, $P_\pm$ commute with the operator $\Lambda^t$ defined in~\eqref{def_Lambda_2d}. The map $P_+$ can be interpreted as the projection onto the Hardy space on the unit circle.

\begin{theorem} \label{proposition_anti_commutor_R_2d}
  Let $\mathcal{R}$ and $\mathcal{R}^*$ be defined by~\eqref{def_R_2d} and~\eqref{def_Cauchy_transform_dual}, respectively, and let $P_\pm$ be given as above. Then, there exists $\mathcal{K} \in \Psi^{-\infty}_\Sigma$ such that
    \begin{equation*}
      \mathcal{R} + \mathcal{R}^* = 4 P_+ - 4 P_- + \mathcal{K}.
    \end{equation*}
    In particular, for all $s, t \in \mathbb{R}$ the operator $\mathcal{K}: H^s(\Sigma) \rightarrow H^t(\Sigma)$ is compact.
\end{theorem}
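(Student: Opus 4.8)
The plan is to work out the principal symbol of $\mathcal{R}$ (equivalently of the Cauchy transform $C_\Sigma$) in the arc-length parametrization and recognize that, modulo a smoothing operator, $\mathcal{R}$ acts as a Fourier multiplier with symbol $2\,\sgn(n)$ on the circle. First I would use the relation $\mathcal{R} = 2 C_\Sigma$ from~\eqref{relation_R_C} together with the classical computation of the singular kernel of the Cauchy transform on a smooth curve: writing $\gamma$ for the arc-length parametrization and using that $\gamma(s)-\gamma(t) = (s-t)\dot\gamma(t) + O((s-t)^2)$ while $\dot\gamma_1(t)+i\dot\gamma_2(t)$ has modulus one, one sees that the integral kernel of $\mathcal{R}$ pulled back to $\mathbb{T}$ is, up to a $C^\infty$ kernel, the periodized Hilbert-transform kernel $\mathrm{p.v.}\,\frac{1}{\mathbb{T}}$. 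Concretely, the worst-order part of $-\frac{2}{\pi}\frac{\nu_1(y)+i\nu_2(y)}{(x_1+ix_2)-(y_1+iy_2)}$ becomes $-\frac{2}{\pi}\cdot\frac{1}{i(s-t)}$ after substituting the parametrization and using $\nu(\gamma)=(\dot\gamma_2,-\dot\gamma_1)$, so $\nu_1+i\nu_2 = \dot\gamma_2 - i\dot\gamma_1 = -i(\dot\gamma_1+i\dot\gamma_2)$, which exactly cancels against the derivative of $\gamma_1+i\gamma_2$ in the denominator. Thus $U\mathcal{R}U^{-1}$ agrees with the convolution operator having multiplier sequence $(2\,\sgn(n))_{n\in\mathbb{Z}}$ up to an operator in $\Psi^{-\infty}$.

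Next I would observe that the Fourier multiplier with symbol $2\,\sgn(n)$ — with the convention $\sgn(0)$ chosen appropriately — is precisely $2P_+ - 2P_-$ composed suitably; more carefully, since $\sgn(n) = 1$ for $n\ge 1$ and $\sgn(n) = -1$ for $n\le -1$, the operator with multiplier $2\,\sgn(n)$ equals $2(P_+ - \widehat{f}(0)e_0\text{-part}) - 2P_-$, and the constant Fourier mode contributes only a rank-one (hence smoothing) correction. Therefore $\mathcal{R} = 2P_+ - 2P_- + \mathcal{K}_1$ with $\mathcal{K}_1 \in \Psi^{-\infty}_\Sigma$. Applying the same analysis to $\mathcal{R}^*$ (or, more quickly, invoking Proposition~\ref{proposition_properties_R_2d}~(i), which gives $\mathcal{R}-\mathcal{R}^*\in\Psi^{-\infty}_\Sigma$), one gets $\mathcal{R}^* = 2P_+ - 2P_- + \mathcal{K}_2$ with $\mathcal{K}_2\in\Psi^{-\infty}_\Sigma$. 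Adding the two identities yields
\begin{equation*}
  \mathcal{R} + \mathcal{R}^* = 4P_+ - 4P_- + \mathcal{K},
\end{equation*}
where $\mathcal{K} = \mathcal{K}_1 + \mathcal{K}_2 \in \Psi^{-\infty}_\Sigma$, which is the claim. One should double-check that $P_\pm$, as defined in~\eqref{def_P_pm}, indeed lie in $\Psi^0_\Sigma$ — which follows since their symbols $h(t,n)$ are independent of $t$ and equal to the indicator sequences $\mathbf{1}_{n\ge 0}$ and $\mathbf{1}_{n\le -1}$, whose finite differences $\omega^q h$ vanish for all but finitely many $n$, so the symbol estimates in condition~(iii) of the definition hold for every $s$, in particular $s=0$.

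The main obstacle I anticipate is not conceptual but bookkeeping: one has to pin down the symbol of $\mathcal{R}$ on the nose, i.e.\ verify that the leading coefficient is exactly $2\,\sgn(n)$ and not some other constant multiple, and that all the lower-order terms arising from the Taylor expansion of $\gamma$ (the curvature terms, the Jacobian of the parametrization, the difference between the chord $|\gamma(s)-\gamma(t)|$ and $|s-t|$) genuinely produce kernels that are either smooth or of strictly lower order, so that the remainder is truly in $\Psi^{-\infty}_\Sigma$ rather than merely in $\Psi^{-1}_\Sigma$. The cleanest route is to cite the known expansion of the Cauchy-transform symbol on a smooth curve from \cite[Section~4.1.3]{SV} (as Proposition~\ref{proposition_properties_R_2d} already does for the identities $\mathcal{R}^2 = 4I_2$ and $\mathcal{R}-\mathcal{R}^*\in\Psi^{-\infty}_\Sigma$); indeed, combining $\mathcal{R}^2 = 4I_2$ with the fact that $\mathcal{R}$ is, modulo $\Psi^{-1}_\Sigma$, a multiplier whose symbol is $\pm 2$ for $n\to\pm\infty$ already forces $\mathcal{R} = 2P_+ - 2P_- \bmod \Psi^{-1}_\Sigma$, and a bootstrap using $\mathcal{R}^2 = 4I_2$ together with the commutation $[P_\pm,\mathcal{R}]$ analysis upgrades the error from $\Psi^{-1}$ to $\Psi^{-\infty}$.
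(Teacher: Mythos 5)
Your proposal is correct and follows essentially the same route as the paper: there, too, the claim is reduced via $\mathcal{R}=2C_\Sigma$ to the fact (cited from \cite[Sections~5.8, 5.9 and Theorem~7.6.1]{SV}) that $C_\Sigma$ coincides modulo a $\Psi^{-\infty}$ operator with the Cauchy transform on the unit circle, which in the Fourier basis is exactly $P_+-P_-$, and the analogous representation for $\mathcal{R}^*$ is then added. Your hands-on kernel expansion identifying the $2\,\sgn(n)$ multiplier (and the optional bootstrap via $\mathcal{R}^2=4I_2$ and the commutator calculus) is just an explicit substitute for those citations.
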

\begin{proof}
  Let $\mathfrak{C} \subset \mathbb{R}^2$ be the circle of radius $1$ centered at the origin and let $U_\mathfrak{C}$ be the map in~\eqref{def_U} defined for $\mathfrak{C}$ instead of $\Sigma$. Then, by \cite[Section~5.8]{SV} one has for the Cauchy transform $C_\mathfrak{C}$ on $\mathfrak{C}$
  \begin{equation*}
    U_\mathfrak{C} C_\mathfrak{C} U_\mathfrak{C}^{-1} = U P_+ U^{-1} - U P_- U^{-1}.
  \end{equation*}
  Moreover, by \cite[Section~5.9 and Theorem~7.6.1]{SV} there exists $\mathcal{K}_1 \in \Psi^{-\infty}$ such that
  \begin{equation*}
    U C_\Sigma U^{-1} = U_\mathfrak{C} C_\mathfrak{C} U_\mathfrak{C}^{-1} + \mathcal{K}_1.
  \end{equation*}
  With~\eqref{relation_R_C} and the last two displayed formulas one gets  
  \begin{equation*}
    U \mathcal{R} U^{-1} = 2 U C_\Sigma U^{-1} = 2 U_\mathfrak{C} C_\mathfrak{C} U_\mathfrak{C}^{-1} + 2 \mathcal{K}_1 = 2 U P_+ U^{-1} - 2 U P_- U^{-1} + 2 \mathcal{K}_1.
  \end{equation*}
  Similarly, there exists $\mathcal{K}_2 \in \Psi^{-\infty}$ such that
  \begin{equation*}
    U \mathcal{R}^* U^{-1} = 2 U P_+ U^{-1} -2 U P_- U^{-1} + \mathcal{K}_2.
  \end{equation*}
  By adding the last two displayed formulas one gets the claim.
\end{proof}

Finally, combining the results from Lemma~\ref{corollary_R_C_bt_2d} and Theorem~\ref{proposition_anti_commutor_R_2d}, we get the following decomposition of $\mathcal{C}_z$ into a positive and a negative part. In particular, this result implies that $\pm \frac{1}{2}$ belongs to the essential spectrum of $\mathcal{C}_z$, when this operator is viewed as a mapping in $H^s(\Sigma; \mathbb{C}^2)$.

\begin{corollary} \label{corollary_positive_negative_2d}
  Let $\mathcal{C}_z$, $z \in (-m,m)$, be defined by~\eqref{def_C_lambda}, let $P_\pm$ be as in~\eqref{def_P_pm}, and let $V$ be given by~\eqref{def_V_2d}. Then, there exists an operator $\mathcal{K} \in \Psi_\Sigma^{-1}$ such that
  \begin{equation*}
    \mathcal{C}_z = \frac{1}{2} (\sigma \cdot \nu) V^* \begin{pmatrix} 0 & P_- - P_+ \\ P_- - P_+ & 0 \end{pmatrix} V (\sigma \cdot \nu) + \mathcal{K}.
  \end{equation*}
  In particular, the realization of $\mathcal{K}$ in $L^2(\Sigma; \mathbb{C}^2)$ is self-adjoint.
\end{corollary}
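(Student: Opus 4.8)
The plan is to start from the identity in Lemma~\ref{corollary_R_C_bt_2d} and invert the sandwiching by $\Lambda V (\sigma\cdot\nu)$ on both sides, using that $(\sigma\cdot\nu)^2 = I_2$, that $V$ is pointwise invertible with $VV^* = \diag(1,1)$ (since $|\nu_1 - i\nu_2|=1$ on $\Sigma$), and that $\Lambda$ is bijective between the relevant Sobolev spaces. Concretely, from Lemma~\ref{corollary_R_C_bt_2d} one has
\begin{equation*}
  4 \Lambda V (\sigma\cdot\nu)\, \mathcal{C}_z\, (\sigma\cdot\nu) V^* \Lambda = -\Lambda \begin{pmatrix} 0 & \mathcal{R}^* \\ \mathcal{R} & 0 \end{pmatrix} \Lambda + 4 \begin{pmatrix} z-m & 0 \\ 0 & z+m \end{pmatrix} + \mathcal{K}_0,
\end{equation*}
with $\mathcal{K}_0$ compact in $L^2(\Sigma;\mathbb{C}^2)$. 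Multiplying by $\frac14 \Lambda^{-1}$ on both sides and then by $(\sigma\cdot\nu) V^*$ on the left and $V(\sigma\cdot\nu)$ on the right (note $(\sigma\cdot\nu)V^* \cdot V(\sigma\cdot\nu) = (\sigma\cdot\nu) V^* V (\sigma\cdot\nu) = I_2$, which one checks from the explicit form of $V$) yields
\begin{equation*}
  \mathcal{C}_z = -\frac14 (\sigma\cdot\nu) V^* \begin{pmatrix} 0 & \mathcal{R}^* \\ \mathcal{R} & 0 \end{pmatrix} V (\sigma\cdot\nu) + (\sigma\cdot\nu) V^* \Lambda^{-1} \begin{pmatrix} z-m & 0 \\ 0 & z+m \end{pmatrix} \Lambda^{-1} V (\sigma\cdot\nu) + \mathcal{K}_1,
\end{equation*}
where $\mathcal{K}_1 = \frac14 (\sigma\cdot\nu)V^* \Lambda^{-1}\mathcal{K}_0 \Lambda^{-1} V(\sigma\cdot\nu)$.

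Next I would dispose of the two error terms. The term containing $z\pm m$ is $(\sigma\cdot\nu)V^*\Lambda^{-1}(\cdots)\Lambda^{-1}V(\sigma\cdot\nu)$; since $\Lambda^{-1} \in \Psi^{-1/2}_\Sigma$, the product $\Lambda^{-1}(\cdots)\Lambda^{-1}$ lies in $\Psi^{-1}_\Sigma$ by Lemma~\ref{lemma_PPDO_mapping_properties}(ii), and multiplying by the smooth matrix-valued multiplication operators $(\sigma\cdot\nu)V^*$ and $V(\sigma\cdot\nu)$ (which are in $\Psi^0_\Sigma$ by~\eqref{equation_mult_op}) keeps it in $\Psi^{-1}_\Sigma$. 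For $\mathcal{K}_1$: using Proposition~\ref{proposition_R_C_2d} one knows $\mathcal{C}_z$ minus its principal part is already in $\Psi^{-2}_\Sigma$, but here it is cleaner simply to argue that $\mathcal{K}_0 \in \Psi^{-\infty}_\Sigma$ — indeed, tracing back, $\mathcal{K}_0$ arises from the $\Psi^{-\infty}_\Sigma$-difference $\mathcal{R}-\mathcal{R}^*$ of Proposition~\ref{proposition_properties_R_2d}(i), the commutator corrections of Lemma~\ref{lemma_PPDO_mapping_properties}(iii), and the $\Psi^{-2}_\Sigma$ term $\mathcal{K}$ of Proposition~\ref{proposition_R_C_2d}, so at worst $\mathcal{K}_0 \in \Psi^{-2}_\Sigma$; conjugating by $\Lambda^{-1}$ on both sides then lands $\mathcal{K}_1$ in $\Psi^{-3}_\Sigma \subset \Psi^{-1}_\Sigma$. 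Either way $\mathcal{K}_1 \in \Psi^{-1}_\Sigma$, and self-adjointness of its $L^2$-realization follows since $\mathcal{C}_z$ is self-adjoint for $z\in(-m,m)$ by Proposition~\ref{proposition_C_z_basic_properties}(ii) and the other two summands on the right are self-adjoint (the first because $\mathcal{R}^*$ is the adjoint of $\mathcal{R}$ and the off-diagonal block swap is the adjoint operation, combined with $V(\sigma\cdot\nu) = ((\sigma\cdot\nu)V^*)^*$; the $z\pm m$ term visibly so).

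The remaining point is the replacement of $\mathcal{R}$ and $\mathcal{R}^*$ by $\pm(P_+ - P_-)$: by Theorem~\ref{proposition_anti_commutor_R_2d} there is $\mathcal{K}' \in \Psi^{-\infty}_\Sigma$ with $\mathcal{R}+\mathcal{R}^* = 4P_+ - 4P_- + \mathcal{K}'$, and by Proposition~\ref{proposition_properties_R_2d}(i), $\mathcal{R}-\mathcal{R}^* \in \Psi^{-\infty}_\Sigma$; adding and subtracting gives $\mathcal{R} = 2(P_+-P_-) + \Psi^{-\infty}_\Sigma$ and $\mathcal{R}^* = 2(P_+-P_-) + \Psi^{-\infty}_\Sigma$. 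Substituting into the off-diagonal block turns $-\frac14 \begin{pmatrix} 0 & \mathcal{R}^* \\ \mathcal{R} & 0\end{pmatrix}$ into $-\frac12 \begin{pmatrix} 0 & P_+-P_- \\ P_+-P_- & 0\end{pmatrix}$ up to a $\Psi^{-\infty}_\Sigma$ error, i.e. $\frac12\begin{pmatrix} 0 & P_--P_+ \\ P_--P_+ & 0\end{pmatrix}$, and absorbing all $\Psi^{-\infty}_\Sigma$ and $\Psi^{-1}_\Sigma$ contributions into a single $\mathcal{K} \in \Psi^{-1}_\Sigma$ gives the stated formula; the self-adjointness of $\mathcal{K}$ in $L^2(\Sigma;\mathbb{C}^2)$ is then inherited exactly as above, since $(\sigma\cdot\nu)V^*\begin{pmatrix} 0 & P_--P_+ \\ P_--P_+ & 0\end{pmatrix}V(\sigma\cdot\nu)$ is self-adjoint ($P_\pm$ being orthogonal projections and the block swap being the adjoint). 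I expect the only mildly delicate step to be the bookkeeping that the $z\pm m$ contribution really is only of order $-1$ (not order $0$): this hinges on having a full power of $\Lambda^{-1}$ on each side rather than, say, $\Lambda^{-1}$ only on one side, which is why passing through the symmetric sandwich form of Lemma~\ref{corollary_R_C_bt_2d} rather than the asymmetric Proposition~\ref{proposition_R_C_2d} is the right move.
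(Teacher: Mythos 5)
Your argument is correct and follows essentially the same route as the paper, namely undoing the sandwich in Lemma~\ref{corollary_R_C_bt_2d} and inserting Theorem~\ref{proposition_anti_commutor_R_2d} together with Proposition~\ref{proposition_properties_R_2d}~(i), with the pseudodifferential order of the remainder controlled by going back to Proposition~\ref{proposition_R_C_2d}. The only inaccuracy is your intermediate claim that the error in Lemma~\ref{corollary_R_C_bt_2d} lies in $\Psi^{-2}_\Sigma$: the outer factors $\Lambda \in \Psi^{1/2}_\Sigma$ raise the order of the $\Psi^{-2}_\Sigma$-remainder of Proposition~\ref{proposition_R_C_2d} and of the commutator corrections from Lemma~\ref{lemma_PPDO_mapping_properties}~(iii), so that error is only in $\Psi^{-1}_\Sigma$; after conjugation with $\Lambda^{-1}$ on both sides your term $\mathcal{K}_1$ still lands in $\Psi^{-2}_\Sigma \subset \Psi^{-1}_\Sigma$, so the stated conclusion is unaffected.
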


\section{A more detailed analysis in dimension $q=3$} \label{section_3d}

In this section we are going to prove similar results as in Section~\ref{section_2d}, but instead of pseudodifferential operators we use the theory of pseudo-homogeneous kernels from \cite{N01}. Throughout this section we assume that $\Sigma \subset \mathbb{R}^3$ is the boundary of a bounded and simply connected $C^\infty$-smooth domain. First, in Subsection~\ref{section_pseudohomogeneous_kernels} we follow closely \cite[Section~4.3]{N01} and recall the main definitions and results related to pseudo-homogeneous kernels, which are employed in Subsection~\ref{section_C_z_3d} to analyze $\mathcal{C}_z$.

\subsection{Pseudo-homogeneous kernels} \label{section_pseudohomogeneous_kernels}

With the notion of pseudo-homogeneous kernels one can easily describe mapping properties of the associated integral operators. In the following, we restrict our attention to the three-dimensional case, but a similar construction can be done in any space dimension, cf. \cite[Section~4.3]{N01}. To define pseudo-homogeneous kernels, one first introduces homogeneous kernels \cite[Section~4.3.2]{N01}.

\begin{definition} \label{definition_homogeneous_kernel}
  Let $m \in \mathbb{N}_0$. A function $K \in C^\infty(\mathbb{R}^3 \times (\mathbb{R}^3 \setminus \{ 0 \}))$ is called a homogeneous kernel of class $-m$, if the following conditions hold:
  \begin{itemize}
    \item[(i)] For all $\alpha, \beta \in \mathbb{N}_0^3$ there exists a constant $C_{\alpha, \beta}$ such that
    \begin{equation*}
      \sup_{y \in \mathbb{R}^3} \sup_{|\zeta|=1} \left| \frac{\partial^{|\alpha|}}{\partial y^\alpha} \frac{\partial^{|\beta|}}{\partial \zeta^\beta} K(y,\zeta) \right| \leq C_{\alpha, \beta}.
    \end{equation*}
    \item[(ii)] For any $\beta \in \mathbb{N}_0^3$ with $|\beta|=m$ the function $\frac{\partial^{|\beta|}}{\partial \zeta^\beta} K(y,\zeta)$ is homogeneous of degree $-2$ with respect to $\zeta$.
    \item[(iii)] For any plane $H$ which is constituted by the equation $(h,\zeta)=0$ and any $m$-tuple of vectors $\zeta_1, \dots, \zeta_m$ in $H$ the condition 
    \begin{equation*}
      \int_{S^1} D_\zeta^m K(y,\zeta') (\zeta_1, \dots, \zeta_m) d \zeta'  =0
    \end{equation*}
    holds, where $S^1$ is the intersection of the sphere $S^2$ with $H$.
  \end{itemize}
\end{definition}

In the following lemma we consider a type of homogeneous kernels that will play an important role in the analysis of $\mathcal{C}_z$ in the next section.

\begin{lemma} \label{lemma_homogeneous_kernel}
  Let $\kappa \in C^\infty(\mathbb{R}^3)$ such that $\kappa$ and all of its derivatives are bounded, let $k \in \mathbb{Z}$, and let $\alpha \in \mathbb{N}_0^3$ such that $|\alpha| + 2 k + 1 > 0$. Then the function $K(y,\zeta) = \kappa(y) \zeta^\alpha |\zeta|^{2k-1}$ is a homogeneous kernel of class $-(|\alpha|+2k+1)$.
\end{lemma}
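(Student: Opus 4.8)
The plan is to verify the three defining conditions of Definition~\ref{definition_homogeneous_kernel} directly for $K(y,\zeta) = \kappa(y)\, \zeta^\alpha |\zeta|^{2k-1}$, with $m := |\alpha| + 2k + 1 \in \mathbb{N}$ by hypothesis. Condition~(i) is the routine part: since $\kappa$ and all its derivatives are bounded, $y$-derivatives only produce more bounded factors $\partial^\gamma \kappa$, so it suffices to control $\zeta$-derivatives of $\zeta^\alpha |\zeta|^{2k-1}$ on the unit sphere $|\zeta| = 1$. Each differentiation of $\zeta^\alpha$ or of $|\zeta|^{2k-1}$ lowers a monomial degree or brings down a power of $|\zeta|$; restricted to $|\zeta| = 1$ everything is a smooth function on the compact set $S^2$, hence bounded, giving the constants $C_{\alpha,\beta}$. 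I would note that $K \in C^\infty(\mathbb{R}^3 \times (\mathbb{R}^3\setminus\{0\}))$ because $\zeta^\alpha |\zeta|^{2k-1}$ is smooth away from $\zeta = 0$.

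For condition~(ii), I would use the homogeneity bookkeeping: $\zeta^\alpha |\zeta|^{2k-1}$ is (positively) homogeneous of degree $|\alpha| + 2k - 1 = m - 2$ in $\zeta$. Differentiating a function homogeneous of degree $d$ with respect to $\zeta$ a total of $m$ times (for any multi-index $\beta$ with $|\beta| = m$) yields a function homogeneous of degree $d - m = (m-2) - m = -2$, provided the result is not identically zero; if it is identically zero the homogeneity condition holds trivially. The factor $\kappa(y)$ is constant in $\zeta$ and does not affect this. So (ii) follows from the elementary fact that $\partial_\zeta$ lowers homogeneity degree by one.

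Condition~(iii) is the one that needs a genuine (if short) argument and is where I expect the only real subtlety. Fix $y$, a hyperplane $H = \{\zeta : (h,\zeta) = 0\}$ through the origin, and vectors $\zeta_1,\dots,\zeta_m \in H$; write $S^1 = S^2 \cap H$. One must show $\int_{S^1} D_\zeta^m K(y,\zeta')(\zeta_1,\dots,\zeta_m)\, d\zeta' = 0$. The key observation is that $D_\zeta^m K(y,\cdot)(\zeta_1,\dots,\zeta_m)$ is, by condition~(ii), homogeneous of degree $-2$ in $\zeta$, so its restriction to the unit circle $S^1$ is well defined; and, crucially, as a polynomial-times-power-of-$|\zeta|$ expression it is an \emph{even} or \emph{odd} function of $\zeta$ according to the parity of $m - 2 \equiv m$ (since $|\zeta|^{\text{even}}$ is even and the remaining polynomial part has degree $\equiv m \pmod 2$ after the homogeneity count). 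Actually the cleaner route is: $D_\zeta^m$ applied to something homogeneous of degree $m-2$ gives a function $g(\zeta)$ homogeneous of degree $-2$; I claim $g$ is parity-$(-2) = $ even in $\zeta$ when restricted appropriately — but more robustly, one differentiates the homogeneity relation. I would instead invoke that the $m$-th total derivative of a function homogeneous of degree $m-2$, evaluated on a fixed $m$-tuple of directions, is a sum of terms each of the form (smooth function homogeneous of degree $-2$), and that any such term restricted to $S^1$ integrates to zero because it is odd under $\zeta' \mapsto -\zeta'$ on the circle. The parity count: $\zeta^\alpha |\zeta|^{2k-1}$ has the parity of $|\alpha| = m - 2k - 1$, i.e. the parity of $m+1$; each of the $m$ derivatives flips parity, so $D_\zeta^m K$ has parity $(m+1) + m = 2m + 1 \equiv 1$, i.e. it is \emph{odd} in $\zeta$. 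An odd function on the centrally symmetric circle $S^1$ has zero integral, giving (iii). The main obstacle is thus a careful parity/homogeneity accounting to confirm that $D_\zeta^m K(y,\cdot)(\zeta_1,\dots,\zeta_m)$ is odd in $\zeta$ and smooth on $S^1$; once that is in hand, the vanishing of the integral is immediate by the change of variables $\zeta' \mapsto -\zeta'$. I would close by remarking that the dependence on $y$ and on the fixed vectors $\zeta_j$ is harmless since they are held constant throughout, so the verification of~(i)--(iii) is complete and $K$ is a homogeneous kernel of class $-m = -(|\alpha| + 2k + 1)$.
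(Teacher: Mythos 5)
Your proposal is correct, and it is more self-contained than the paper's argument: the paper disposes of the kernel $\widetilde{K}(y,\zeta)=\zeta^\alpha|\zeta|^{2k-1}$ simply by citing \cite[Example~4.2]{N01} and then observes, as you do, that the factor $\kappa(y)$ (bounded with all derivatives) cannot affect conditions (i)--(iii) of Definition~\ref{definition_homogeneous_kernel}, since it is constant in $\zeta$ and only contributes bounded factors under $\partial_y$. What you add is the actual verification that the cited example would supply: (i) by compactness of the unit sphere and smoothness of $\zeta^\alpha|\zeta|^{2k-1}$ away from $\zeta=0$; (ii) by the count that this function is positively homogeneous of degree $|\alpha|+2k-1=m-2$ with $m=|\alpha|+2k+1$, so any $m$-th order $\zeta$-derivative is homogeneous of degree $-2$; and (iii) by the parity argument. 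The latter is the genuinely nontrivial point and your final accounting is right: $|\zeta|^{2k-1}$ is even, so $K(y,\cdot)$ has the parity of $|\alpha|\equiv m+1 \pmod 2$, each $\zeta$-differentiation flips parity, hence every entry of $D^m_\zeta K(y,\cdot)$ -- and therefore $D^m_\zeta K(y,\cdot)(\zeta_1,\dots,\zeta_m)$ for fixed $\zeta_1,\dots,\zeta_m$, being a constant-coefficient combination of such entries -- is odd under $\zeta\mapsto-\zeta$, and its integral over the centrally symmetric great circle $S^1$ vanishes. Note that your intermediate attempt to infer parity from the homogeneity degree $-2$ would not work (positive homogeneity says nothing about behaviour under $\zeta\mapsto-\zeta$); you correctly abandoned it in favour of the explicit parity count, so the finished argument stands, and it buys a proof of the lemma that does not lean on the external reference.
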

\begin{proof}
  First, one shows as in \cite[Example~4.2]{N01} that $\widetilde{K}(y,\zeta) = \zeta^\alpha |\zeta|^{2k-1}$ is a homogeneous kernel of class $-(|\alpha|+2k+1)$. Since the multiplication by a smooth function depending only on $y$ does not affect the conditions in Definition~\ref{definition_homogeneous_kernel}, the claim on $K$ follows.
\end{proof}

With the help of homogeneous kernels we can introduce now pseudo-homogeneous kernels \cite[Section~4.3.3]{N01}.

\begin{definition} \label{definition_pseudo_homogeneous_kernel}
  Let $m \in \mathbb{N}_0$. A function $K: \mathbb{R}^3 \times (\mathbb{R}^3 \setminus \{ 0 \}) \rightarrow \mathbb{C}$ is called a pseudo-homogeneous kernel of class $-m$, if for any $s \in \mathbb{N}$ there exist $l \in \mathbb{N}$, homogeneous kernels $K_{m+j}$ of class $-(m+j)$, $j \in \{0, \dots, l-1\}$, and a function $K_{m+l}$ that is $s$ times differentiable such that
  \begin{equation*}
    K(y,\zeta) = K_m(y,\zeta) + \sum_{j=1}^{l-1} K_{m+j}(y,\zeta) + K_{m+l}(y,\zeta).
  \end{equation*}
\end{definition}

The important property of pseudo-homogeneous kernels is that one can provide the mapping properties of the associated integral operators \cite[Theorem~4.3.2]{N01}. 

\begin{proposition} \label{proposition_phk_mapping_properties}
  Let $m \in \mathbb{N}_0$ and let $K$ be a pseudo-homogeneous kernel of class $-m$. Then, for any $r \in \mathbb{R}$ the operator that is formally defined by
  \begin{equation*}
    \mathcal{K} \varphi (x) := \int_\Sigma K(y, x-y) \varphi(y) \textup{d} \sigma(y), \quad \varphi \in C^\infty(\Sigma), ~x \in \Sigma,
  \end{equation*}
  gives rise to a bounded operator $\mathcal{K}: H^r(\Sigma) \rightarrow H^{r+m}(\Sigma)$.
\end{proposition}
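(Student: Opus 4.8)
The plan is to reduce, via the defining decomposition of a pseudo-homogeneous kernel, to two much simpler situations: genuinely homogeneous kernels, and kernels that are merely finitely smooth. Fix $r \in \mathbb{R}$ and $m \in \mathbb{N}_0$, and choose $s \in \mathbb{N}$ with $s > |r| + m + 2$. Applying Definition~\ref{definition_pseudo_homogeneous_kernel} with this $s$ gives $l \in \mathbb{N}$, homogeneous kernels $K_{m+j}$ of class $-(m+j)$ for $j \in \{0, \dots, l-1\}$, and a remainder $K_{m+l}$ that is $s$ times continuously differentiable, with $K = K_m + \sum_{j=1}^{l-1} K_{m+j} + K_{m+l}$. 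Since $H^{r+m+j}(\Sigma) \hookrightarrow H^{r+m}(\Sigma)$ continuously for $j \geq 0$, it suffices to prove that the operator with kernel $K_{m+j}(y, x-y)$ maps $H^r(\Sigma)$ boundedly into $H^{r+m+j}(\Sigma)$ for each $j$, and that the operator $\mathcal{K}_{m+l}$ with kernel $K_{m+l}(y,x-y)$ maps $H^r(\Sigma)$ into $H^{r+m}(\Sigma)$; summing then yields the assertion.

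The remainder operator $\mathcal{K}_{m+l}$ is handled directly: as $K_{m+l}$ is $s$ times continuously differentiable on $\mathbb{R}^3 \times \mathbb{R}^3$, its composition with a $C^\infty$-parametrization of $\Sigma$ shows that $(x, y) \mapsto K_{m+l}(y, x-y)$ together with its tangential derivatives up to order $s$ is bounded on the compact set $\Sigma \times \Sigma$; commuting up to $s$ tangential derivatives through the integral and pairing the kernel (which lies in $C^s(\Sigma)$ in the $y$-variable) against the input then gives boundedness of $\mathcal{K}_{m+l} \colon H^{-s}(\Sigma) \to H^{s}(\Sigma)$. Because $s > |r| + m$, the continuous embeddings $H^r(\Sigma) \hookrightarrow H^{-s}(\Sigma)$ and $H^{s}(\Sigma) \hookrightarrow H^{r+m}(\Sigma)$ yield the required bound $\mathcal{K}_{m+l} \colon H^r(\Sigma) \to H^{r+m}(\Sigma)$.

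It remains to treat a single homogeneous kernel $K$ of class $-\mu$ with $\mu \geq m$ and to show that the associated operator maps $H^r(\Sigma)$ into $H^{r+\mu}(\Sigma)$. Conditions~(ii) and~(iii) of Definition~\ref{definition_homogeneous_kernel} let one split $K(y,\zeta)$, for each fixed $y$, into a part homogeneous of degree $\mu - 2$ in $\zeta$ and a polynomial in $\zeta$ of degree $< \mu$, both with $C^\infty$-bounded dependence on $y$; the polynomial part produces an operator with $C^\infty$ kernel on $\Sigma \times \Sigma$, which is smoothing of infinite order, so only the genuinely homogeneous part matters. Using a finite atlas of $\Sigma$ by $C^\infty$-charts and a subordinate partition of unity one localizes, and in a chart $\gamma \colon W \subset \mathbb{R}^2 \to \Sigma$ the kernel becomes $K(\gamma(v), \gamma(u)-\gamma(v))\,J(v)$ with $J$ the surface Jacobian; the Taylor expansion $\gamma(u) - \gamma(v) = D\gamma(v)(u-v) + R(u,v)$ with $|R(u,v)| = O(|u-v|^2)$ isolates the leading term $K(\gamma(v), D\gamma(v)(u-v))\,J(v)$, which—being homogeneous of degree $\mu - 2$ in $u - v$ with smooth dependence on $v$—is the Schwartz kernel of a classical pseudodifferential operator on $W$ of order $-\mu$: its symbol is the partial Fourier transform in $u-v$, a classical symbol of order $-\mu$ precisely because the cancellation condition~(iii) rules out logarithmic corrections, while condition~(i) provides smoothness in the base variable $v$. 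The standard $L^2$--Sobolev mapping theorem for pseudodifferential operators on the compact manifold $\Sigma$ then gives the bound $H^r(\Sigma) \to H^{r+\mu}(\Sigma)$ for the leading term, and the contributions of $R(u,v)$ and of the Jacobian and Taylor corrections have kernels that are again pseudo-homogeneous but of class $-(\mu+1)$, hence are absorbed by applying the same scheme inductively—the induction terminating at the finitely-smooth-remainder step of the previous paragraph. The main obstacle is exactly this last local reduction: checking rigorously that flattening the surface and changing variables turns a homogeneous kernel of class $-\mu$ into a classical pseudodifferential operator of order $-\mu$ modulo a kernel of strictly higher class, which demands careful bookkeeping of the error terms and an honest use of the cancellation condition~(iii) to keep the local symbols classical; this analysis is carried out in detail in \cite[Section~4.3]{N01}, to which we refer.
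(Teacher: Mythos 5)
The paper does not actually prove this proposition: it is recalled verbatim from N\'ed\'elec, and the text simply cites \cite[Theorem~4.3.2]{N01} (the statement is a restriction to the three-dimensional case of that theorem). Your plan is therefore not in competition with an argument in the paper; it is essentially an outline of the proof strategy behind the cited theorem --- decompose the pseudo-homogeneous kernel, dispose of the finitely smooth remainder by brute force, and treat each homogeneous kernel of class $-\mu$ by localization, flattening of $\Sigma$, and identification of the leading term as a pseudodifferential operator of order $-\mu$, with the cancellation condition~(iii) ensuring the symbol is classical --- and at the decisive local-reduction step you defer to \cite[Section~4.3]{N01}, i.e.\ to exactly the source the paper quotes. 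That is acceptable here, since the paper itself treats the result as a black box.

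One quantitative slip in the part you did write out: a kernel that is only $C^s$ jointly on $\Sigma\times\Sigma$ does not give boundedness $H^{-s}(\Sigma)\to H^{s}(\Sigma)$; after you commute $s$ tangential $x$-derivatives through the integral, the resulting kernel is merely continuous in $y$, so it cannot be paired with an $H^{-s}$ density. The correct bookkeeping is that a $C^s$ kernel yields $H^{-a}(\Sigma)\to H^{b}(\Sigma)$ for $a+b\leq s$ (derivatives must be split between the two variables). Since the integer $s$ in Definition~\ref{definition_pseudo_homogeneous_kernel} is at your disposal, this is repaired by choosing $s>2(|r|+m)+2$ and taking $a,b\geq |r|+m$; the structure of your reduction is unaffected.
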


\subsection{Analysis of $\mathcal{C}_z$} \label{section_C_z_3d}

Let $-\Delta$ be the free Laplacian defined on $H^2(\mathbb{R}^3)$. We use for $\mu \in \rho(-\Delta) = \mathbb{C} \setminus[0,\infty)$ the notation $\mathcal{S}(\mu)$ for the single layer boundary integral operator associated with $-\Delta-\mu$, which acts on a sufficiently smooth function $\varphi: \Sigma \rightarrow  \mathbb{C}$ as
\begin{equation} \label{def_single_layer}
  \mathcal{S}(\mu) \varphi(x) = \int_\Sigma \frac{e^{i \sqrt{\mu}|x - y|}}{4 \pi |x - y|} \varphi(y) \textup{d} \sigma(y), \qquad x \in \Sigma,
\end{equation}
where $\sqrt{\mu}$ is the complex square root satisfying $\textup{Im}\, \sqrt{\mu} > 0$ for $\mu \notin [0, \infty)$, cf. \cite{M00}.
If $\mu < 0$, then it is well-known that $\mathcal{S}(\mu): H^s(\Sigma) \rightarrow H^{s+1}(\Sigma)$ is bounded and bijective for all $s \in \mathbb{R}$ and that the realization of $\mathcal{S}(\mu)$ in $L^2(\Sigma)$ is non-negative and self-adjoint; cf. \cite[Lemma~2.6]{BEHL20} for a similar argument for $\mu=-1$, space dimension $2$, and $s \geq -\frac{1}{2}$; the property for negative $s$ follows then by duality due to the formal symmetry of $\mathcal{S}(\mu)$ for $\mu<0$. 
For a constant $c_\Lambda>0$ we define
\begin{equation} \label{def_Lambda_3d}
  \Lambda := \big( \mathcal{S}(-1)^{-1} + c_\Lambda \big)^{1/2}.
\end{equation}
The following mapping properties $\Lambda$ should be well-known, but for the sake of completeness we present a proof:

\begin{proposition}
  For any $s \in \mathbb{R}$ the operator $\Lambda$ defined in~\eqref{def_Lambda_3d} gives rise to a bounded and bijective map $\Lambda: H^s(\Sigma) \rightarrow H^{s-1/2}(\Sigma)$. Moreover, the realization of $\Lambda$ in $L^2(\Sigma)$ viewed as an unbounded operator defined on $H^{1/2}(\Sigma)$ is self-adjoint.  
\end{proposition}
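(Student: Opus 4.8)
The plan is to derive everything from the known mapping and spectral properties of the single layer operator $\mathcal{S}(-1)$ together with spectral calculus. First I would recall that, as stated just above~\eqref{def_Lambda_3d}, $\mathcal{S}(-1)\colon H^s(\Sigma)\to H^{s+1}(\Sigma)$ is bounded and bijective for every $s\in\mathbb{R}$, and that the realization $S$ of $\mathcal{S}(-1)$ in $L^2(\Sigma)$, with $\dom S = L^2(\Sigma)$ (since it maps $L^2$ boundedly into $H^1\subset L^2$), is a bounded, self-adjoint, non-negative operator; moreover $0\notin\sigma(S)$ because $\mathcal{S}(-1)$ is bijective on $L^2(\Sigma)=H^0(\Sigma)$. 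Hence $S$ is a bounded self-adjoint operator with $\sigma(S)\subset[c,\infty)^{-1}\cdot\{\,\cdot\,\}$—more precisely $\sigma(S)\subset[\delta,\|S\|]$ for some $\delta>0$—so $S^{-1}$ is an unbounded self-adjoint operator with $\dom S^{-1}=\ran S = H^1(\Sigma)$ and $\sigma(S^{-1})\subset[\delta',\infty)$. Consequently $S^{-1}+c_\Lambda$ is self-adjoint, bounded below by $\delta'+c_\Lambda>0$, and its non-negative square root $\Lambda = (S^{-1}+c_\Lambda)^{1/2}$ is a well-defined self-adjoint operator in $L^2(\Sigma)$ by the spectral theorem, with $\dom\Lambda = \dom\big((S^{-1}+c_\Lambda)^{1/2}\big)$. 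This already gives the self-adjointness assertion; it remains to identify the domain and the Sobolev mapping properties.

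Next I would establish the case $s=1/2$ of the mapping statement and simultaneously identify the domain. The key point is that $\Lambda^2 = S^{-1}+c_\Lambda\colon H^{1}(\Sigma)\to H^{0}(\Sigma)$ is bounded and bijective (since $S^{-1}\colon H^1\to H^0$ is, being the inverse of the bijection $\mathcal{S}(-1)\colon H^0\to H^1$, and adding the bounded operator $c_\Lambda\mathrm{Id}$ preserves this by a standard Fredholm/Neumann-series perturbation argument, or directly because $S^{-1}+c_\Lambda$ is injective self-adjoint with closed range equal to all of $L^2$). To see that $\Lambda$ itself maps $H^{1/2}(\Sigma)$ bijectively onto $H^0(\Sigma)$, I would interpolate: the pair $\mathcal{S}(-1)^{1/2}$—or better, I would avoid fractional powers of $\mathcal S$ and instead argue that $\Lambda$ and $\mathcal{S}(-1)^{-1/2}$ differ by a lower-order (smoothing) perturbation, using $\Lambda^2 - \mathcal{S}(-1)^{-1} = c_\Lambda\mathrm{Id}$, which is bounded $H^r\to H^r$, hence order zero relative to the order-one operator $\Lambda^2$. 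The cleanest route: show $\Lambda\colon H^{1/2}(\Sigma)\to L^2(\Sigma)$ is bounded and bijective by noting that $\mathcal{S}(-1)^{1/2}\colon L^2(\Sigma)\to H^{1/2}(\Sigma)$ is bounded bijective (this is the standard half-order smoothing of the half-power of the single layer operator, proved e.g. via the fact that $\mathcal S(-1)$ is, modulo smoothing, a classical $\Psi$DO of order $-1$, or cited from \cite{M00}), and that $\Lambda\,\mathcal{S}(-1)^{1/2} = (S^{-1}+c_\Lambda)^{1/2}\mathcal S(-1)^{1/2}$ is, by functional calculus in $L^2(\Sigma)$, a bounded bijection of $L^2(\Sigma)$ onto itself with bounded inverse $\mathcal S(-1)^{1/2}(S^{-1}+c_\Lambda)^{-1/2}$—indeed both $(S^{-1}+c_\Lambda)^{1/2}S^{1/2}$ and its inverse are bounded since the function $t\mapsto (t^{-1}+c_\Lambda)^{1/2}t^{1/2} = (1+c_\Lambda t)^{1/2}$ is bounded above and below on $\sigma(S)\subset[\delta,\|S\|]$. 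Composing, $\Lambda = \big(\Lambda\mathcal S(-1)^{1/2}\big)\circ\mathcal S(-1)^{-1/2}$ is bounded and bijective from $H^{1/2}(\Sigma)$ to $L^2(\Sigma)$, and $\dom\Lambda$ (as the $L^2$-graph-closed operator from spectral calculus) is exactly $\ran\big(\mathcal S(-1)^{1/2}\big)=H^{1/2}(\Sigma)$.

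Finally, the general $s\in\mathbb{R}$ statement follows by a bootstrapping argument. Having $\Lambda\colon H^{1/2}\to H^0$ bijective, I would show by induction that $\Lambda\colon H^{(k+1)/2}\to H^{k/2}$ is bounded bijective for every integer $k\ge 0$: if $\varphi\in H^{(k+1)/2}$, then $\Lambda^2\varphi = (S^{-1}+c_\Lambda)\varphi\in H^{(k-1)/2}$ and one uses $\Lambda\varphi = \Lambda^{-1}(\Lambda^2\varphi)$ together with the inductive mapping property of $\Lambda^{-1}$; simultaneously $\mathcal S(-1)$ lifts regularity by one order. For negative $s$ one dualizes: $\Lambda$ is formally self-adjoint (symmetric) with respect to the $L^2(\Sigma)$ pairing extended to the duality $H^s\times H^{-s}$, so boundedness and bijectivity of $\Lambda\colon H^s\to H^{s-1/2}$ for $s\ge 1/2$ transpose to the same for $H^{1/2-s}\to H^{-s}$, covering all $s<1/2$. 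Non-integer positive $s$ are then obtained by interpolation between consecutive integer-half-indices, or simply subsumed once one notes the argument via $\Lambda\mathcal S(-1)^{1/2}$ and $\mathcal S(-1)^{-1/2}$ above works verbatim at any level if one grants $\mathcal S(-1)^{t/2}\colon H^r\to H^{r+t/2}$ bijective for all $r$—which is the standard $\Psi$DO fact from \cite{M00}. The main obstacle is precisely this last ingredient: making rigorous that fractional powers $\mathcal{S}(-1)^{1/2}$ have the expected half-order smoothing behaviour on the full Sobolev scale. I would handle it by invoking that $\mathcal S(-1)$ is, up to a smoothing remainder, a classical pseudodifferential operator of order $-1$ on the compact manifold $\Sigma$ with positive principal symbol (hence $\mathcal S(-1)^{-1}+c_\Lambda$ is an elliptic $\Psi$DO of order $1$ and $\Lambda$ one of order $1/2$), so that the mapping and self-adjointness claims are standard consequences of elliptic $\Psi$DO theory on closed manifolds, cf. \cite{M00}; alternatively, everything can be kept elementary using only the $L^2$-spectral calculus plus the single bijectivity input $\mathcal S(-1)^{1/2}\colon H^r\to H^{r+1/2}$, which itself follows from interpolation of $\mathcal S(-1)\colon H^r\to H^{r+1}$ with $\mathrm{Id}$ and the moment inequality for fractional powers of the positive operator $S$.
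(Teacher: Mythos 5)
Your overall strategy coincides with the paper's: both arguments rest on the same two pillars, namely the bijectivity of $\mathcal{S}(-1)^{1/2}\colon L^2(\Sigma)\to H^{1/2}(\Sigma)$ (the paper imports this as in \cite[Lemma~2.6~(ii)]{BEHL20}, you propose to get it by interpolation or by viewing $\mathcal{S}(-1)$ as an elliptic pseudodifferential operator of order $-1$) and the bounded bijectivity of $\mathcal{S}(-1)^{-1}+c_\Lambda\colon H^{r+1}(\Sigma)\to H^{r}(\Sigma)$ on the whole scale, followed by duality and interpolation for the remaining values of $s$. The only genuine deviation is at $s=\tfrac12$: the paper realizes $\Lambda$ through the closed, strictly positive quadratic form associated with $\mathcal{S}(-1)^{-1}+c_\Lambda$ and invokes Kato's second representation theorem to obtain self-adjointness on $\dom\Lambda=H^{1/2}(\Sigma)$, whereas you obtain the same conclusion from the functional-calculus identity $(S^{-1}+c_\Lambda)^{1/2}S^{1/2}=(1+c_\Lambda S)^{1/2}$, which is bounded with bounded inverse in $L^2(\Sigma)$; both routes are correct, and yours is a legitimate, slightly more computational substitute. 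Your inductive bootstrap for $s=(k+1)/2$ via $\Lambda\varphi=\Lambda^{-1}(\Lambda^2\varphi)$ is the same mechanism as the paper's conjugation with $A_{l,r}=(\mathcal{S}(-1)^{-1}+c_\Lambda)^{l}$, merely organized as an induction, and the treatment of negative $s$ by formal symmetry and of intermediate $s$ by interpolation is identical to the paper's Step~3.

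One claim in your first paragraph is false and should be removed: $0\in\sigma(S)$ for the $L^2(\Sigma)$ realization $S$ of $\mathcal{S}(-1)$. Bijectivity of $\mathcal{S}(-1)\colon H^{0}(\Sigma)\to H^{1}(\Sigma)$ does not give surjectivity onto $L^2(\Sigma)$; the range of $S$ in $L^2(\Sigma)$ is $H^{1}(\Sigma)$, and since $S$ maps $L^2(\Sigma)$ boundedly into $H^1(\Sigma)$, Rellich's theorem shows $S$ is compact in $L^2(\Sigma)$, so $\sigma(S)$ consists of positive eigenvalues accumulating at $0$ and there is no $\delta>0$ with $\sigma(S)\subset[\delta,\|S\|]$. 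The error is inessential for your argument: all that is actually used later is that $S$ is bounded, injective, non-negative and self-adjoint, so that $S^{-1}$ is self-adjoint with $\sigma(S^{-1})\subset[\|S\|^{-1},\infty)$, and that $t\mapsto(1+c_\Lambda t)^{1/2}$ is bounded above and below on $(0,\|S\|]$; both facts hold without the false spectral inclusion, so the step goes through after deleting the assertion $0\notin\sigma(S)$. A minor further point: when you pass from half-integer indices to general $s$ by interpolation, bijectivity (not just boundedness) needs a one-line justification, e.g.\ by interpolating the inverses as well and noting consistency on a dense set; the paper glosses over this in the same way.
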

\begin{proof}
  \textit{Step 1:} First, we show the claim for $s = \frac{1}{2}$ and about the realization of $\Lambda$ in $L^2(\Sigma)$. In the same way as in \cite[Lemma~2.6~(ii)]{BEHL20} one finds that the map $\mathcal{S}(-1)^{1/2}: L^2(\Sigma) \rightarrow H^{1/2}(\Sigma)$ is a non-negative and bijective operator that is self-adjoint in $L^2(\Sigma)$. Hence, for any $c_\Lambda > 0$ the quadratic form
  \begin{equation*}
    \begin{split}
      \mathfrak{a}[\varphi, \psi] &:= \big( \mathcal{S}(-1)^{-1/2} \varphi, \mathcal{S}(-1)^{-1/2} \psi \big)_{L^2(\Sigma)} + c_\Lambda (\varphi, \psi)_{L^2(\Sigma)},\\
      \varphi, \psi \in \dom \mathfrak{a} &= \ran \mathcal{S}(-1)^{1/2} = H^{1/2}(\Sigma),
    \end{split}
  \end{equation*}
  that is associated with the self-adjoint operator $\mathcal{S}(-1)^{-1} + c_\Lambda$, is closed and strictly positive.   
  By the second representation theorem \cite[Chapter~VI, Theorem~2.23]{kato} one has that $ (\mathcal{S}(-1)^{-1} + c_\Lambda)^{1/2}$ is self-adjoint on the set $\dom  (\mathcal{S}(-1)^{-1} + c_\Lambda)^{1/2} = \dom \mathfrak{a} = H^{1/2}(\Sigma)$ and, since this operator is strictly positive, it is bijective from its domain $H^{1/2}(\Sigma)$ into $L^2(\Sigma)$. This implies all claims for $s=\frac{1}{2}$.
  
  \textit{Step 2:} Next, we show the claim for $s = l+\frac{1}{2}$ with $l \in \mathbb{N}$. Since $\mathcal{S}(-1): H^r(\Sigma) \rightarrow H^{r+1}(\Sigma)$ is bijective for all $r \in \mathbb{R}$ and $c_\Lambda > 0$, it is not difficult to see that $\mathcal{S}(-1)^{-1} + c_\Lambda$ gives for any $r \in \mathbb{R}$ rise to a bounded and bijective operator 
  \begin{equation*}
    \mathcal{S}(-1)^{-1} + c_\Lambda: H^{r+1}(\Sigma) \rightarrow H^r(\Sigma).
  \end{equation*}
  In particular, the mapping
  \begin{equation*}
    A_{l,r}:= \big(\mathcal{S}(-1)^{-1} + c_\Lambda \big)^l: H^{l+r}(\Sigma) \rightarrow H^{r}(\Sigma)
  \end{equation*}
  is bounded and bijective for any $r \in \mathbb{R}$. Hence, we conclude with the result from \textit{Step~1} and the spectral theorem that
  \begin{equation*}
    \Lambda := A_{l, 0}^{-1} (\mathcal{S}(-1)^{-1} + c_\Lambda)^{1/2} A_{l, 1/2}: H^{l+1/2}(\Sigma) \rightarrow H^l(\Sigma)
  \end{equation*}
  is a bounded and bijective map that is a restriction of $(\mathcal{S}(-1)^{-1} + c_\Lambda)^{1/2}$.

  \textit{Step~3:} To conclude, we mention first that the statement for $s = -l$ with $l \in \mathbb{N}_0$ follows from the results in \textit{Steps~1} \& \textit{2} by duality and the formal symmetry of $\Lambda$. An interpolation argument yields the claim for arbitrary $s \in \mathbb{R}$. 
\end{proof}

The main object in this section is the operator $\mathcal{R}$ formally acting on a sufficiently smooth function $\varphi: \Sigma \rightarrow \mathbb{C}^2$ as
\begin{equation} \label{def_R_3d}
  \mathcal{R} \varphi (x) = \lim_{\varepsilon \searrow 0} \int_{\Sigma \setminus B(x,\varepsilon)} r(x,y) \varphi(y) \textup{d} \sigma(y),
\end{equation}
where the integral kernel $r$ is the $\mathbb{C}^{2 \times 2}$-valued function
\begin{equation*}
  r(x,y) = - \frac{\sigma \cdot (x-y)}{\pi|x-y|^3} (\sigma \cdot \nu(y)), \qquad x \neq y.
\end{equation*}
The map $\mathcal{R}$ is closely related to the Riesz transform on $\Sigma$. We define also the formal adjoint of $\mathcal{R}$ with respect to the inner product in $L^2(\Sigma; \mathbb{C}^{2})$ by
\begin{equation} \label{def_R_star_3d} 
  \mathcal{R}^* \varphi (x) = \lim_{\varepsilon \searrow 0} \int_{\Sigma \setminus B(x,\varepsilon)} r(y,x)^* \varphi(y) \textup{d} \sigma(y).
\end{equation}
We remark that the definitions of $\mathcal{R}$ and $\mathcal{R}^*$ imply the simple, but useful relations
\begin{equation} \label{commutator_R_R_star}
  (\sigma \cdot \nu) \mathcal{R} = -\mathcal{R}^* (\sigma \cdot \nu) \quad \text{and} \quad (\sigma \cdot \nu) \mathcal{R}^* = -\mathcal{R} (\sigma \cdot \nu).
\end{equation}
In the following proposition we provide a link of $\mathcal{R}$ and $\mathcal{R}^*$ and the operator $\mathcal{C}_z$ given in~\eqref{def_C_lambda}. In particular, this implies the mapping properties of $\mathcal{R}$ and $\mathcal{R}^*$.

\begin{proposition} \label{proposition_R_C_3d}
  Let $\mathcal{R}$ and $\mathcal{R}^*$ be defined by~\eqref{def_R_3d} and~\eqref{def_R_star_3d}, respectively, let $\Lambda$ be as in~\eqref{def_Lambda_3d}, let $\mathcal{C}_z$, $z \in (-m,m)$, be given by~\eqref{def_C_lambda}, and let $s \in \mathbb{R}$. Then $\mathcal{R}$ and $\mathcal{R}^*$ give rise to bounded operators in $H^{s}(\Sigma; \mathbb{C}^2)$ 
  and there exists an operator $\mathcal{K}$ that is compact from $H^s(\Sigma; \mathbb{C}^4)$ to $H^{s+1}(\Sigma; \mathbb{C}^4)$ such that 
  \begin{equation} \label{equation_C_z_3d}
    \mathcal{C}_z = \begin{pmatrix} (z+m) \Lambda^{-2} I_2 & -\frac{i}{4} \mathcal{R} (\sigma \cdot \nu) \\ \frac{i}{4} (\sigma \cdot \nu) \mathcal{R}^* & (z-m) \Lambda^{-2} I_2 \end{pmatrix} + \mathcal{K}.
  \end{equation} 
  In particular, the realization of $\mathcal{K}$ in $L^2(\Sigma; \mathbb{C}^4)$ is self-adjoint.
\end{proposition}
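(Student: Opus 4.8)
The plan is to extract $\mathcal{C}_z$ from the explicit kernel $G_{z,3}$ in~\eqref{def_G_lambda} by isolating its two leading homogeneous parts and treating the rest with the calculus of Subsection~\ref{section_pseudohomogeneous_kernels}. Put $k:=\sqrt{z^2-m^2}$, which for $z\in(-m,m)$ is purely imaginary with $ik<0$, so that $u\mapsto e^{iku}$ is entire. Taylor-expanding $e^{ik|x|}$ and $(1-ik|x|)e^{ik|x|}$ about $|x|=0$ and splitting off the two non-integrable leading contributions, I would write
\begin{equation*}
  G_{z,3}(x)=\frac{zI_4+m\beta}{4\pi|x|}+\frac{i(\alpha\cdot x)}{4\pi|x|^3}+R(x),
\end{equation*}
where each term of the resulting series for $R$ is, up to a constant matrix factor, either a function $\zeta^\alpha|\zeta|^{2l-1}$ with $l\in\mathbb{Z}$ and $|\alpha|+2l+1>0$ — a homogeneous kernel of class $-(|\alpha|+2l+1)\le-2$ by Lemma~\ref{lemma_homogeneous_kernel} — or a homogeneous polynomial in $\zeta$. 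Truncating the (convergent) expansion at a high enough order then realises $R$ as a pseudo-homogeneous kernel of class $-2$ in the sense of Definition~\ref{definition_pseudo_homogeneous_kernel}, so by Proposition~\ref{proposition_phk_mapping_properties} the associated integral operator $\mathcal{K}_R$ is bounded $H^s(\Sigma;\mathbb{C}^4)\to H^{s+2}(\Sigma;\mathbb{C}^4)$, hence compact into $H^{s+1}(\Sigma;\mathbb{C}^4)$ by Rellich's theorem.

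Next I would read off the operators attached to the two leading terms. Since $zI_4+m\beta=\left(\begin{smallmatrix}(z+m)I_2&0\\0&(z-m)I_2\end{smallmatrix}\right)$, the first term produces the block-diagonal operator whose diagonal blocks are $(z\pm m)$ times the operator $\mathcal{S}_0$ with kernel $(4\pi|x-y|)^{-1}$. Since $\alpha\cdot\zeta=\left(\begin{smallmatrix}0&\sigma\cdot\zeta\\\sigma\cdot\zeta&0\end{smallmatrix}\right)$, the second term produces the purely off-diagonal operator whose two nonzero blocks both equal $\mathcal{T}$, the operator with kernel $\tfrac{i(\sigma\cdot(x-y))}{4\pi|x-y|^3}$. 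Using the pointwise identity $(\sigma\cdot\nu(y))^2=|\nu(y)|^2I_2=I_2$ and the definitions~\eqref{def_R_3d},~\eqref{def_R_star_3d}, a short computation gives $\mathcal{R}(\sigma\cdot\nu)=4i\,\mathcal{T}$ and $(\sigma\cdot\nu)\mathcal{R}^*=-4i\,\mathcal{T}$, that is $\mathcal{T}=-\tfrac{i}{4}\mathcal{R}(\sigma\cdot\nu)=\tfrac{i}{4}(\sigma\cdot\nu)\mathcal{R}^*$, which in passing re-proves~\eqref{commutator_R_R_star}. The kernel of $\mathcal{T}$ is a homogeneous kernel of class $0$; it lies just outside the range of Lemma~\ref{lemma_homogeneous_kernel}, but it is of the type treated in \cite[Example~4.2]{N01}, the cancellation condition Definition~\ref{definition_homogeneous_kernel}(iii) holding because $\zeta\mapsto\sigma\cdot\zeta$ is odd. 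Hence $\mathcal{T}$ is bounded on $H^s(\Sigma;\mathbb{C}^2)$ by Proposition~\ref{proposition_phk_mapping_properties}, and multiplying by the smooth matrix function $\sigma\cdot\nu$ shows that $\mathcal{R}$ and $\mathcal{R}^*$ are bounded on $H^s(\Sigma;\mathbb{C}^2)$.

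It then remains to trade $\mathcal{S}_0$ for $\Lambda^{-2}$. Writing $(4\pi|x-y|)^{-1}=\tfrac{e^{-|x-y|}}{4\pi|x-y|}+\tfrac{1-e^{-|x-y|}}{4\pi|x-y|}$, the first part is the kernel of $\mathcal{S}(-1)$ and the second is a pseudo-homogeneous kernel of class $-2$ (its leading term being the constant $\tfrac{1}{4\pi}$), so $\mathcal{S}_0-\mathcal{S}(-1):H^s(\Sigma)\to H^{s+2}(\Sigma)$ is bounded. Moreover, from $\Lambda^2=\mathcal{S}(-1)^{-1}+c_\Lambda$ one gets $\mathcal{S}(-1)-\Lambda^{-2}=\Lambda^{-2}\big(\Lambda^2-\mathcal{S}(-1)^{-1}\big)\mathcal{S}(-1)=c_\Lambda\,\Lambda^{-2}\mathcal{S}(-1)$, which maps $H^s(\Sigma)$ boundedly into $H^{s+2}(\Sigma)$ since $\mathcal{S}(-1):H^s(\Sigma)\to H^{s+1}(\Sigma)$ and $\Lambda^{-2}:H^{s+1}(\Sigma)\to H^{s+2}(\Sigma)$ are bounded. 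Collecting the pieces yields the asserted decomposition with
\begin{equation*}
  \mathcal{K}=\mathcal{K}_R+\begin{pmatrix}(z+m)(\mathcal{S}_0-\Lambda^{-2})I_2&0\\0&(z-m)(\mathcal{S}_0-\Lambda^{-2})I_2\end{pmatrix},
\end{equation*}
which is bounded $H^s(\Sigma;\mathbb{C}^4)\to H^{s+2}(\Sigma;\mathbb{C}^4)$ and hence compact into $H^{s+1}(\Sigma;\mathbb{C}^4)$. Finally, self-adjointness of $\mathcal{K}$ in $L^2(\Sigma;\mathbb{C}^4)$ follows from $\mathcal{C}_z=\mathcal{C}_z^*$ for real $z\in(-m,m)$ (Proposition~\ref{proposition_C_z_basic_properties}(ii)) together with self-adjointness of the displayed principal part: $\Lambda^{-2}$ is non-negative and self-adjoint, $\sigma\cdot\nu$ is a self-adjoint multiplier, and $\big(-\tfrac{i}{4}\mathcal{R}(\sigma\cdot\nu)\big)^*=\tfrac{i}{4}(\sigma\cdot\nu)\mathcal{R}^*$ by the definition of $\mathcal{R}^*$.

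The step I expect to be the main obstacle is the bookkeeping behind the claim that $R$ (and likewise $\tfrac{1-e^{-|x-y|}}{4\pi|x-y|}$) is a pseudo-homogeneous kernel of class $-2$: for each prescribed differentiability order one must present the Taylor expansion in precisely the form of Definition~\ref{definition_pseudo_homogeneous_kernel}, recognising finitely many terms as homogeneous kernels via Lemma~\ref{lemma_homogeneous_kernel} and absorbing the remaining (polynomial, hence arbitrarily smooth) terms into the regular part. A related nuisance is that the class $0$ kernel underlying $\mathcal{R}$ sits exactly at the endpoint excluded by Lemma~\ref{lemma_homogeneous_kernel} and has to be handled by the direct argument of \cite[Example~4.2]{N01}.
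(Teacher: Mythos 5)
Your argument is correct and, in its core, is the paper's own: the same power-series bookkeeping for the kernel (the $|x|^{-2}$-terms cancel, the odd powers of $|x-y|$ are handled by Lemma~\ref{lemma_homogeneous_kernel}, the even powers are smooth), the same exchange of $\mathcal{S}(-1)$ for $\Lambda^{-2}$ via $\mathcal{S}(-1)-\Lambda^{-2}=c_\Lambda\Lambda^{-2}\mathcal{S}(-1)$, the same identification of the leading off-diagonal operator with $-\tfrac{i}{4}\mathcal{R}(\sigma\cdot\nu)=\tfrac{i}{4}(\sigma\cdot\nu)\mathcal{R}^*$ (which, as you note, reproves \eqref{commutator_R_R_star}), and the same self-adjointness argument via Proposition~\ref{proposition_C_z_basic_properties}~(ii). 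The one point where you genuinely deviate is the $H^s$-boundedness of $\mathcal{R}$ and $\mathcal{R}^*$: the paper never feeds the strongly singular part into the pseudo-homogeneous calculus; it keeps the full anti-diagonal entry of $\mathcal{C}_z$ (with the factor $(1-i\sqrt{z^2-m^2}|x|)e^{i\sqrt{z^2-m^2}|x|}$ intact), observes that this entry is bounded in $H^s(\Sigma;\mathbb{C}^2)$ simply because $\mathcal{C}_z$ is (Proposition~\ref{proposition_C_z_basic_properties}~(i), imported from \cite{BBZ22}), and then transfers boundedness to $\mathcal{R},\mathcal{R}^*$ through the splitting of that entry into $-\tfrac{i}{4}\mathcal{R}(\sigma\cdot\nu)$ plus a smoothing remainder and the invertibility of the multiplier $\sigma\cdot\nu$. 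You instead invoke Proposition~\ref{proposition_phk_mapping_properties} at the endpoint $m=0$, checking Definition~\ref{definition_homogeneous_kernel}~(iii) by oddness. That is mathematically sound (the Riesz-type kernel is indeed homogeneous of class $0$, and such operators are bounded on every $H^s$ of a smooth closed surface), but be aware that Proposition~\ref{proposition_phk_mapping_properties} as formulated defines the operator by an ordinary surface integral, which for a class-$0$ kernel exists only as a principal value; so you either need the principal-value version of \cite[Theorem~4.3.2]{N01} for this endpoint class, or you can sidestep the issue entirely, as the paper does, by quoting Proposition~\ref{proposition_C_z_basic_properties}~(i). Your route buys independence from the $H^s$-boundedness result of \cite{BBZ22}; the paper's route buys that the singular integral never has to be treated within the Nédélec framework. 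A cosmetic remark: the kernel $\tfrac{1-e^{-|x-y|}}{4\pi|x-y|}$ is in fact pseudo-homogeneous of class $-3$ (the constant leading term is smooth and belongs to the regular part), which only strengthens your conclusion.
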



\begin{proof}
  Denote by $\mathcal{T}$ the integral operator that is formally acting on a sufficiently smooth function $\varphi: \Sigma \rightarrow \mathbb{C}^2$ as
  \begin{equation*}
    \mathcal{T} \varphi(x) := \lim_{\varepsilon \searrow 0} \int_{\Sigma \setminus B(x, \varepsilon)}   t(x-y) \varphi(y) \textup{d}\sigma(y), \quad  x \in \Sigma,
  \end{equation*}
  where the integral kernel $t: \mathbb{R}^3 \setminus \{0\} \rightarrow \mathbb{C}^{2 \times 2}$ is given by
  \begin{equation} \label{int_kernel_t}
    t(x) := \left( 1 - i \sqrt{z^2-m^2} | x |\right) \frac{i (\sigma \cdot x)}{ 4 \pi | x |^3}    e^{i \sqrt{z^2-m^2} |x|}.
  \end{equation}
  Recall that $\mathcal{S}(z^2-m^2)$ denotes the single layer boundary integral operator for $-\Delta + m^2 - z^2$. With these notations we conclude from~\eqref{def_G_lambda} that
  \begin{equation} \label{equation_decomposition_C_z}
    \mathcal{C}_z = \begin{pmatrix} (z+m) \mathcal{S}(z^2-m^2) I_2 & \mathcal{T} \\ \mathcal{T} & (z-m) \mathcal{S}(z^2-m^2) I_2 \end{pmatrix}.
  \end{equation}
  Note that \eqref{equation_decomposition_C_z} implies that $\mathcal{T}$ gives rise to a bounded operator in $H^s(\Sigma; \mathbb{C}^2)$, as $\mathcal{C}_z$ in bounded in $H^s(\Sigma; \mathbb{C}^4)$ by Proposition~\ref{proposition_C_z_basic_properties}.
  
  First, we analyze the diagonal terms and show that 
  \begin{equation} \label{equation_single_layer}
    \begin{split}
      \mathcal{S}(z^2-m^2) &= \Lambda^{-2} + \mathcal{K}_1
    \end{split}
  \end{equation}
  with a compact operator $\mathcal{K}_1: H^s(\Sigma) \rightarrow H^{s+1}(\Sigma)$.
  Indeed~\eqref{def_Lambda_3d} implies that
  \begin{equation*} 
    \begin{split}
      \mathcal{S}(z^2-m^2) &= \Lambda^{-2} + \mathcal{K}_2 + \mathcal{K}_3,
    \end{split}
  \end{equation*}
  where 
  \begin{equation*}
    \mathcal{K}_2 := \mathcal{S}(z^2-m^2) - \mathcal{S}(-1) \quad \text{and} \quad \mathcal{K}_3 := \mathcal{S}(-1) - (\mathcal{S}(-1)^{-1} + c_\Lambda)^{-1}.
  \end{equation*}
  With~\eqref{def_single_layer} we get that $\mathcal{K}_2$ is an integral operator with kernel $k_2(x-y)$, where
  \begin{equation*}
    \begin{split}
      k_2(x) &= \frac{e^{-\sqrt{m^2-z^2}|x|}}{4 \pi |x|} - \frac{e^{-|x|}}{4 \pi |x|}, \quad x \in \mathbb{R}^3 \setminus \{ 0 \}.
    \end{split}
  \end{equation*}
  By making a power series expansion we see that the first term in the sums cancels, 
  \begin{equation*}
    \begin{split}
      k_2(x) &= \sum_{k=0}^\infty c_k |x|^{2k} + \sum_{k=0}^\infty d_k |x|^{2k+1}
    \end{split}
  \end{equation*}
  with some suitable coefficients $c_k, d_k \in \mathbb{R}$, and that both sums converge absolutely, as the power series expansion of the exponential function has this property as well. Hence, the first sum is an analytic function, while the second sum is a pseudo-homogeneous kernel of class $-3$ in the sense of Definition~\ref{definition_pseudo_homogeneous_kernel}, cf. Lemma~\ref{lemma_homogeneous_kernel}. Thus, we conclude with Proposition~\ref{proposition_phk_mapping_properties} that 
  $\mathcal{K}_2: H^s(\Sigma) \rightarrow H^{s+3}(\Sigma)$ is bounded for any $s \in \mathbb{R}$ and in particular, $\mathcal{K}_2$ is compact as an operator from $H^s(\Sigma)$ to $H^{s+1}(\Sigma)$.
  
  Next, using the resolvent identity we see that
  \begin{equation} \label{equation_K_3_compact}
    \mathcal{K}_3 = c_\Lambda \mathcal{S}(-1) (\mathcal{S}(-1)^{-1} + c_\Lambda)^{-1} = c_\Lambda \mathcal{S}(-1) \Lambda^{-2},
  \end{equation}
  which is due to the mapping properties of $\mathcal{S}(-1)$ and $\Lambda$ bounded from $H^s(\Sigma)$ to $H^{s+2}(\Sigma)$ and hence compact from $H^s(\Sigma)$ to $H^{s+1}(\Sigma)$. Thus, we have shown~\eqref{equation_single_layer}.

  It remains to consider the anti-diagonal blocks. The integral kernel $t$ of the operator $\mathcal{T}$ in~\eqref{int_kernel_t} has the series expansion
  \begin{equation} \label{power_series}
    t(x) = \frac{i (\sigma \cdot x)}{4 \pi |x|^3} + \sum_{k=0}^\infty \tilde{c}_k (\sigma \cdot x) |x|^{2k} + \sum_{k=0}^\infty \tilde{d}_k (\sigma \cdot x) |x|^{2k-1}
  \end{equation}
  with some suitable coefficients $\tilde{c}_k, \tilde{d}_k \in \mathbb{C}$, as the terms with $|x|^{-2}$ cancel. Since the power series for the exponential function is absolutely converging, both sums in~\eqref{power_series} are absolutely converging as well for any $x \neq 0$. In particular, the first sum defines an analytic function. Moreover, by Lemma~\ref{lemma_homogeneous_kernel} the second sum is a pseudo-homogeneous kernel of class $-2$ in the sense of Definition~\ref{definition_pseudo_homogeneous_kernel}. Hence, we conclude with Proposition~\ref{proposition_phk_mapping_properties} that 
  \begin{equation} \label{equation_R}
    \mathcal{T}  = -\frac{i}{4} \mathcal{R} (\sigma \cdot \nu) + \mathcal{K}_4,
  \end{equation}
  where $\mathcal{K}_4$ is bounded from $H^s(\Sigma; \mathbb{C}^2)$ to $H^{s+2}(\Sigma; \mathbb{C}^2)$ and, in particular, compact from $H^{s}(\Sigma; \mathbb{C}^2)$ to $H^{s+1}(\Sigma; \mathbb{C}^2)$. Combining~\eqref{equation_R} with~\eqref{commutator_R_R_star}, we can also write
  \begin{equation} \label{equation_R_star}
     \mathcal{T} = \frac{i}{4} (\sigma \cdot \nu)  \mathcal{R}^* + \mathcal{K}_4.
  \end{equation}
  In particular, as $\mathcal{T}$ is bounded in $H^s(\Sigma; \mathbb{C}^2)$ and $\sigma \cdot \nu$ is a bijection in $H^s(\Sigma; \mathbb{C}^2)$, we conclude that $\mathcal{R}$ and $\mathcal{R}^*$ give rise to bounded operators in $H^{s}(\Sigma; \mathbb{C}^2)$. Finally, equations \eqref{equation_decomposition_C_z}, \eqref{equation_single_layer}, \eqref{equation_R}, and \eqref{equation_R_star} imply \eqref{equation_C_z_3d}. Furthermore, as the realization of $\mathcal{C}_z$ in $L^2(\Sigma; \mathbb{C}^4)$ is self-adjoint for $z \in (-m,m)$ by Proposition~\ref{proposition_C_z_basic_properties}~(ii), also the operator $\mathcal{K}$ in~\eqref{equation_C_z_3d} must be self-adjoint in $L^2(\Sigma; \mathbb{C}^4)$.
\end{proof}

Let us mention that, since $\mathcal{R}$ and $\mathcal{R}^*$ are formally adjoint to each other with respect to the inner product in $L^2(\Sigma; \mathbb{C}^2)$, a continuity argument shows for all $\varphi \in H^s(\Sigma; \mathbb{C}^2)$ and $\psi \in H^{-s}(\Sigma; \mathbb{C}^2)$, $s \in \mathbb{R}$, that
\begin{equation}
  ( \mathcal{R} \varphi, \psi )_{H^s(\Sigma; \mathbb{C}^2) \times H^{-s}(\Sigma; \mathbb{C}^2)} = ( \varphi, \mathcal{R}^* \psi )_{H^s(\Sigma; \mathbb{C}^2) \times H^{-s}(\Sigma; \mathbb{C}^2)},
\end{equation}
where $( \cdot, \cdot )_{H^s(\Sigma; \mathbb{C}^2) \times H^{-s}(\Sigma; \mathbb{C}^2)}$ denotes the sesquilinear duality product in $H^s(\Sigma; \mathbb{C}^2) \times H^{-s}(\Sigma; \mathbb{C}^2)$.

In the following lemma we state a variant of \eqref{equation_C_z_3d} that is particularly useful in the application in the boundary triple framework, as it is done, e.g., in \cite{BHSS22}. Define the matrix $V \in \mathbb{C}^{4 \times 4}$ by
\begin{equation} \label{def_V_3d}
  V = \begin{pmatrix} I_2 & 0 \\ 0 & -i (\sigma \cdot \nu) \end{pmatrix}.
\end{equation}
Then we have the following result:

\begin{lemma} \label{corollary_R_C_bt}
  Let $\mathcal{R}$ and $\mathcal{R}^*$ be defined by~\eqref{def_R_3d} and~\eqref{def_R_star_3d}, respectively, let $\Lambda$ be as in~\eqref{def_Lambda_3d}, and let $\mathcal{C}_z$, $z \in (-m,m)$, be given by~\eqref{def_C_lambda}. Then, for any $s \in \mathbb{R}$ there exists an operator $\mathcal{K}$ that is compact in $H^s(\Sigma; \mathbb{C}^4)$ such that the bounded and everywhere defined operator 
  \begin{equation*}
    4 \Lambda V (\alpha \cdot \nu) \mathcal{C}_z (\alpha \cdot \nu) V^*\Lambda: H^s(\Sigma; \mathbb{C}^4) \rightarrow H^{s-1}(\Sigma; \mathbb{C}^4)
  \end{equation*}
  can be written as
  \begin{equation} \label{equation_C_z_bt}
    4 \Lambda V (\alpha \cdot \nu) \mathcal{C}_z (\alpha \cdot \nu) V^*\Lambda = -\Lambda \begin{pmatrix} 0 & \mathcal{R}^* \\ \mathcal{R} & 0 \end{pmatrix} \Lambda + 4 \begin{pmatrix} z - m & 0 \\ 0 & z+m \end{pmatrix} + \mathcal{K}.
  \end{equation} 
  In particular, the realization of $\mathcal{K}$ in $L^2(\Sigma; \mathbb{C}^4)$ is self-adjoint.
\end{lemma}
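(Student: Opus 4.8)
The plan is to derive~\eqref{equation_C_z_bt} from Proposition~\ref{proposition_R_C_3d} by a direct algebraic computation, tracking the compact remainders through the unbounded operator $\Lambda$. First I would substitute the block form~\eqref{equation_C_z_3d} of $\mathcal{C}_z$ into the left-hand side and conjugate with $V$ and $\alpha \cdot \nu$. The key observation is that $\alpha \cdot \nu = \begin{pmatrix} 0 & \sigma \cdot \nu \\ \sigma \cdot \nu & 0 \end{pmatrix}$, so that $(\alpha \cdot \nu) V^* = \begin{pmatrix} 0 & i(\sigma\cdot\nu)^2 \\ \sigma\cdot\nu & 0 \end{pmatrix} = \begin{pmatrix} 0 & iI_2 \\ \sigma\cdot\nu & 0 \end{pmatrix}$, using $(\sigma\cdot\nu)^2 = I_2$; and similarly $V(\alpha \cdot \nu) = \begin{pmatrix} 0 & \sigma\cdot\nu \\ -i I_2 & 0 \end{pmatrix}$. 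Plugging these into $V(\alpha\cdot\nu)\,\mathcal{C}_z\,(\alpha\cdot\nu)V^*$ and multiplying out the three middle $2\times 2$-block matrices yields, after using $(\sigma\cdot\nu)\mathcal{R}(\sigma\cdot\nu) = -\mathcal{R}^*$ and $(\sigma\cdot\nu)\mathcal{R}^*(\sigma\cdot\nu) = -\mathcal{R}$ from~\eqref{commutator_R_R_star} as well as $(\sigma\cdot\nu)\Lambda^{-2}(\sigma\cdot\nu)$, an expression whose diagonal blocks are essentially $(z-m)I_2$ and $(z+m)I_2$ up to commutators and whose off-diagonal blocks are multiples of $\mathcal{R}^*$ and $\mathcal{R}$.

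The next step is to pre- and post-multiply by $\Lambda$ and collect terms. Here one must be slightly careful: the terms coming from the diagonal of~\eqref{equation_C_z_3d} are of the form $\Lambda (\sigma\cdot\nu)\Lambda^{-2}(\sigma\cdot\nu)\Lambda$, which is not literally the identity because $\sigma\cdot\nu$ need not commute with $\Lambda$. However $\sigma\cdot\nu \in \Psi^0_\Sigma$-type multiplication and $\Lambda^{-2}$ has order $-1$ in the sense of the pseudo-homogeneous calculus (more precisely $\Lambda^{-2} = \mathcal{S}(-1)(I + c_\Lambda \mathcal{S}(-1))^{-1}$ smooths by one order and its commutator with a smooth multiplication operator smooths by two orders — this is the analogue of Lemma~\ref{lemma_PPDO_mapping_properties}(iii) for the operator $\Lambda^{-2}$ built from the single layer operator, and can be extracted from the pseudo-homogeneous kernel structure of $\mathcal{S}(-1)$ plus the mapping properties of $\Lambda$). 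Thus $\Lambda[(\sigma\cdot\nu),\Lambda^{-2}](\sigma\cdot\nu)\Lambda$ is bounded from $H^s$ to $H^s$ after the two factors of $\Lambda$ (each of order $+\tfrac12$) are absorbed against the two-order gain, hence compact by Rellich; and $\Lambda(\sigma\cdot\nu)^2\Lambda^{-2}\cdot\Lambda^{-2}\Lambda$-type manipulations reduce the leading diagonal contribution to the constant matrix $4\begin{pmatrix} z-m & 0 \\ 0 & z+m\end{pmatrix}$ up to a compact error. The off-diagonal terms, which involve $\Lambda$ acting on $\mathcal{R}$ or $\mathcal{R}^*$ with no intervening $\Lambda^{-2}$, directly give $-\Lambda\begin{pmatrix} 0 & \mathcal{R}^* \\ \mathcal{R} & 0 \end{pmatrix}\Lambda$ up to a compact remainder coming from $\mathcal{K}_4$ in Proposition~\ref{proposition_R_C_3d} (which smooths by two orders, so $\Lambda \mathcal{K}_4 \Lambda$ still smooths by one order).

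Finally I would assemble all remainder terms into a single operator $\mathcal{K}$, note that each of them has been shown to be compact in $H^s(\Sigma;\mathbb{C}^4)$, and deduce self-adjointness of the $L^2$-realization: by construction the left-hand side $4\Lambda V(\alpha\cdot\nu)\mathcal{C}_z(\alpha\cdot\nu)V^*\Lambda$ is symmetric in $L^2(\Sigma;\mathbb{C}^4)$ for $z \in (-m,m)$ (since $\mathcal{C}_z$ is self-adjoint by Proposition~\ref{proposition_C_z_basic_properties}(ii), $\Lambda$ is self-adjoint, and $V(\alpha\cdot\nu)$ and its adjoint $(\alpha\cdot\nu)V^*$ are related by the matrix identities above — one checks $(V(\alpha\cdot\nu))^* = (\alpha\cdot\nu)V^*$ directly), and the explicit main term $-\Lambda\begin{pmatrix} 0 & \mathcal{R}^* \\ \mathcal{R} & 0\end{pmatrix}\Lambda + 4\begin{pmatrix} z-m & 0 \\ 0 & z+m\end{pmatrix}$ is also symmetric because $\mathcal{R}$ and $\mathcal{R}^*$ are formally adjoint; hence $\mathcal{K}$, as the difference, is symmetric, and being also compact (in particular bounded and everywhere defined) it is self-adjoint. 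The main obstacle I anticipate is the bookkeeping for the diagonal blocks: showing cleanly that $\Lambda(\sigma\cdot\nu)\Lambda^{-2}(\sigma\cdot\nu)\Lambda$ differs from $I_2$ (hence contributes $4(z\pm m)I_2$) only by a compact operator requires the commutator estimate $[(\sigma\cdot\nu),\Lambda^{-2}]\colon H^s \to H^{s+2}$, which is not stated as a lemma for the three-dimensional $\Lambda$ and must be justified from the pseudo-homogeneous kernel expansion of $\mathcal{S}(-1)$ together with the mapping properties of $\Lambda$ established in the preceding proposition.
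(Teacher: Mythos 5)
Your proposal follows essentially the same route as the paper's proof: insert the block decomposition of $\mathcal{C}_z$ from Proposition~\ref{proposition_R_C_3d}, carry out the conjugation by $V(\alpha\cdot\nu)$ and $\Lambda$ using~\eqref{commutator_R_R_star}, and reduce the diagonal term $\Lambda(\sigma\cdot\nu)\Lambda^{-2}(\sigma\cdot\nu)\Lambda$ to $I_2$ plus a compact error via the commutator of $\sigma\cdot\nu$ with $\mathcal{S}(-1)$ (plus the two-order-smoothing correction $\Lambda^{-2}-\mathcal{S}(-1)$), with the self-adjointness of $\mathcal{K}$ deduced exactly as in the paper from the self-adjointness of $\mathcal{C}_z$. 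The commutator estimate you single out as the remaining obstacle is precisely what the paper supplies, by Taylor-expanding $\nu(x)-\nu(y)$ against the kernel $e^{-|x-y|}/(4\pi|x-y|)$ and invoking Lemma~\ref{lemma_homogeneous_kernel} and Proposition~\ref{proposition_phk_mapping_properties} to see that this commutator has a pseudo-homogeneous kernel of class $-2$, so your outline matches the paper's argument step for step.
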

\begin{proof}
  First, equation~\eqref{equation_C_z_3d} and a simple calculation imply 
  \begin{equation} \label{equation_product}
    \begin{split}
      4 \Lambda V (\alpha \cdot \nu) &\mathcal{C}_z (\alpha \cdot \nu) V^*\Lambda  \\
      &= \Lambda \begin{pmatrix} 4 (z-m) (\sigma \cdot \nu) \Lambda^{-2} (\sigma \cdot \nu) & -\mathcal{R}^* \\ -\mathcal{R} & 4 (z+m) \Lambda^{-2} \end{pmatrix} \Lambda + \mathcal{K}_1,
    \end{split}
  \end{equation}
  where $\mathcal{K}_1$ is a compact operator in $H^s(\Sigma; \mathbb{C}^4)$ due to the mapping properties of $\Lambda$.

  By~\eqref{def_Lambda_3d} we can write
  \begin{equation*}
    (\sigma \cdot \nu) \Lambda^{-2} - \Lambda^{-2} (\sigma \cdot \nu) = \mathcal{T} + \mathcal{K}_2,
  \end{equation*}    
   where $\mathcal{T}$ is the integral operator acting as   
  \begin{equation} \label{def_T}
    \mathcal{T} \varphi(x) = \big( (\sigma \cdot \nu) \mathcal{S}(-1) - \mathcal{S}(-1) (\sigma \cdot \nu) \big) \varphi(x) = \int_\Sigma t(x,y) \varphi \textup{d} \sigma(y)
  \end{equation}
  with
  \begin{equation} \label{def_t_int_kernel}
    t(x,y) = \big( \sigma \cdot (\nu(x) - \nu(y)) \big) \frac{e^{-|x-y|}}{ 4 \pi | x -y|}
  \end{equation}
  and $\mathcal{K}_2$ is given by
  \begin{equation*}
    \mathcal{K}_2 = (\sigma \cdot \nu) \big( (\mathcal{S}(-1) + c_\Lambda)^{-1} - \mathcal{S}(-1) \big) + \big( \mathcal{S}(-1) - (\mathcal{S}(-1) + c_\Lambda)^{-1} \big) (\sigma \cdot \nu).
  \end{equation*}
  A similar argument as in~\eqref{equation_K_3_compact} shows that $\mathcal{K}_2$ is compact as a mapping from $H^{s-1/2}(\Sigma; \mathbb{C}^2)$ to $H^{s+1/2}(\Sigma; \mathbb{C}^2)$.

  Next, we show that $\mathcal{T}$ in~\eqref{def_T} is compact as an operator from $H^{s-1/2}(\Sigma; \mathbb{C}^2)$ to $H^{s+1/2}(\Sigma; \mathbb{C}^2)$. Denote by $\tilde{\nu}=(\tilde{\nu}_1, \tilde{\nu}_2, \tilde{\nu}_3)$ a smooth and compactly supported extension of $\nu$ that is defined on $\mathbb{R}^3$. Then, this function has a Taylor series expansion of the form 
  \begin{equation*}
    \tilde{\nu}(x) = \tilde{\nu}(y) + D \tilde{\nu}(y) (x-y) + \sum_{j=1}^3 (x-y)^\top H \tilde{\nu}_j(y) (x-y) e_j + \dots,
  \end{equation*}
  where $D \tilde{\nu}$ and $H \tilde{\nu}_j$ are the Jacobi matrix and the Hessian of $\tilde{\nu}$ and $\tilde{\nu}_j$, respectively, and $e_j$ are the canonical basis vectors in $\mathbb{R}^3$. Using this and the power series expansion of  
  the exponential function we find that the function $t$ defined in~\eqref{def_t_int_kernel} can be written as
  \begin{equation*}
    \begin{split} 
      t(x,y) &= \sigma \cdot \bigg( D \tilde{\nu}(y) (x-y) + \sum_{j=1}^3 (x-y)^\top H \tilde{\nu}_j(y) (x-y) e_j + \dots \bigg) \sum_{k=0}^\infty \frac{|x-y|^{2k-1}}{4 \pi (2k)!} \\
       &\qquad - \sigma \cdot (\tilde{\nu}(x) - \tilde{\nu}(y)) \sum_{k=0}^\infty \frac{|x-y|^{2k}}{4 \pi (2k+1)!}.
    \end{split}
  \end{equation*}
  Since the second term is a smooth function, we find with Lemma~\ref{lemma_homogeneous_kernel} that $t(x,y)$ is a pseudo-homogeneous kernel of class $-2$. Therefore, we get with Proposition~\ref{proposition_phk_mapping_properties} that $\mathcal{T}$ is bounded from $H^{s-1/2}(\Sigma; \mathbb{C}^2)$ to $H^{s+3/2}(\Sigma; \mathbb{C}^2)$ and, in particular, compact from $H^{s-1/2}(\Sigma; \mathbb{C}^2)$ to $H^{s+1/2}(\Sigma; \mathbb{C}^2)$. Hence, we conclude that
  \begin{equation*}
    \Lambda (\sigma \cdot \nu) \Lambda^{-2} (\sigma \cdot \nu) \Lambda = I_2 + \mathcal{K}_3
  \end{equation*}
  holds, where $\mathcal{K}_3$ is compact in $H^s(\Sigma; \mathbb{C}^2)$.
  Together with~\eqref{equation_product} this implies~\eqref{equation_C_z_bt}.
%

  Finally, equation~\eqref{equation_C_z_bt} and the fact that the realization of $\mathcal{C}_z$ in $L^2(\Sigma; \mathbb{C}^4)$ is self-adjoint for $z \in (-m,m)$ by Proposition~\ref{proposition_C_z_basic_properties} yield that also $\mathcal{K}$ must be self-adjoint in $L^2(\Sigma; \mathbb{C}^4)$.
\end{proof}

In the following proposition some further properties of $\mathcal{R}$ and $\mathcal{R}^*$ that follow from known properties of $\mathcal{C}_z$ are stated:

\begin{proposition} \label{proposition_properties_R_3d}
  Let $\mathcal{R}$ and $\mathcal{R}^*$ be defined by~\eqref{def_R_3d} and~\eqref{def_R_star_3d}, respectively, and let $s \in \mathbb{R}$. Then, the following holds:
  \begin{itemize}
    \item[(i)] The operator $\mathcal{R} - \mathcal{R}^*: H^{s-1}(\Sigma; \mathbb{C}^{2}) \rightarrow H^{s}(\Sigma; \mathbb{C}^{2})$ gives rise to a bounded operator.
    \item[(ii)] One has $\mathcal{R}^2  = 4 I_{2}$ and $(\mathcal{R}^*)^2  = 4 I_{2}$. In particular, this implies that 
    \begin{equation} \label{mapping_properties}
      \mathcal{R}  \mathcal{R}^* - 4 I_{2}: H^{s-1}(\Sigma; \mathbb{C}^{2}) \rightarrow H^{s}(\Sigma; \mathbb{C}^{2})
    \end{equation}
    gives rise to a bounded operator.
    \item[(iii)] The operator $\Lambda^{-2} \mathcal{R} - \mathcal{R}  \Lambda^{-2}: H^{s-1}(\Sigma; \mathbb{C}^{2}) \rightarrow H^{s}(\Sigma; \mathbb{C}^{2})$ is compact.
  \end{itemize}
\end{proposition}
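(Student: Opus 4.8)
The plan is to handle item~(i) by a direct computation of the integral kernel of $\mathcal{R}-\mathcal{R}^{*}$, and to deduce items~(ii) and~(iii) from the exact block representation \eqref{equation_decomposition_C_z} of $\mathcal{C}_z$ together with the exact relation $-4(\mathcal{C}_z(\alpha\cdot\nu))^{2}=I_{4}$ from Proposition~\ref{proposition_C_z_basic_properties}~(iii). For item~(i) I would first compute, using \eqref{def_R_3d}, \eqref{def_R_star_3d}, the self-adjointness of $\sigma\cdot a$ for $a\in\mathbb{R}^{3}$, and the identity $(\sigma\cdot a)(\sigma\cdot b)+(\sigma\cdot b)(\sigma\cdot a)=2(a\cdot b)I_{2}$, that $\mathcal{R}-\mathcal{R}^{*}$ is the integral operator with kernel
\[
  -\frac{1}{\pi|x-y|^{3}}\Big(2\big((x-y)\cdot\nu(y)\big)I_{2}+\big(\sigma\cdot(\nu(x)-\nu(y))\big)(\sigma\cdot(x-y))\Big).
\]
Inserting the Taylor expansion of a smooth compactly supported extension of $\nu$ exactly as in the proof of Lemma~\ref{corollary_R_C_bt} and applying Lemma~\ref{lemma_homogeneous_kernel}, the second summand is a pseudo-homogeneous kernel of class $-1$. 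The first summand is a constant multiple of $\nu(y)\cdot\nabla_{y}|x-y|^{-1}$, i.e.\ of the integral kernel of the double layer boundary integral operator for $-\Delta$, which is pseudo-homogeneous of class $-1$ by \cite{N01} (the relevant point is the geometric fact $(x-y)\cdot\nu(y)=O(|x-y|^{2})$ for $x,y\in\Sigma$). By Proposition~\ref{proposition_phk_mapping_properties} this gives that $\mathcal{R}-\mathcal{R}^{*}\colon H^{s-1}(\Sigma;\mathbb{C}^{2})\to H^{s}(\Sigma;\mathbb{C}^{2})$ is bounded.

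For items~(ii) and~(iii) I would use, besides \eqref{equation_decomposition_C_z}, the facts from the proof of Proposition~\ref{proposition_R_C_3d} that $\mathcal{T}(\sigma\cdot\nu)=-\tfrac{i}{4}\mathcal{R}+\mathcal{K}_{4}(\sigma\cdot\nu)$ and that $\mathcal{K}_{4}$ and $\mathcal{K}_{1}:=\mathcal{S}(z^{2}-m^{2})-\Lambda^{-2}$ are bounded from $H^{r}$ to $H^{r+2}$ for every $r\in\mathbb{R}$, together with $\alpha\cdot\nu=\begin{pmatrix}0&\sigma\cdot\nu\\\sigma\cdot\nu&0\end{pmatrix}$ and $(\sigma\cdot\nu)^{2}=I_{2}$. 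Writing $\mathcal{C}_z(\alpha\cdot\nu)$ in block form and squaring, the relation $-4(\mathcal{C}_z(\alpha\cdot\nu))^{2}=I_{4}$ splits into two pieces: the diagonal blocks, after substituting $\mathcal{T}(\sigma\cdot\nu)=-\tfrac{i}{4}\mathcal{R}+\mathcal{K}_{4}(\sigma\cdot\nu)$, express $\tfrac14\mathcal{R}^{2}$ as $I_{2}$ plus a sum of terms each carrying a factor $\mathcal{K}_{4}$ or the scalar factor $z^{2}-m^{2}$ (with $\mathcal{S}(z^{2}-m^{2})$ bounded); the off-diagonal blocks, after dividing by the nonzero factors $z\pm m$, give the anticommutation relation $\mathcal{T}(\sigma\cdot\nu)\,\mathcal{S}(z^{2}-m^{2})(\sigma\cdot\nu)+\mathcal{S}(z^{2}-m^{2})(\sigma\cdot\nu)\,\mathcal{T}(\sigma\cdot\nu)=0$.

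Item~(ii) then follows by letting $z\to m^{-}$ in the first identity, whose left-hand side $\tfrac14\mathcal{R}^{2}$ is independent of $z$: on the right $z^{2}-m^{2}\to0$, the operators $\mathcal{S}(z^{2}-m^{2})$ remain uniformly bounded on $L^{2}(\Sigma;\mathbb{C}^{2})$, and since by \eqref{int_kernel_t} the kernel of $\mathcal{K}_{4}$ equals $t(x-y)-i(\sigma\cdot(x-y))/(4\pi|x-y|^{3})=O(|z^{2}-m^{2}|)$ uniformly on $\Sigma$, one has $\mathcal{K}_{4}\to0$ in $\mathcal{B}(L^{2}(\Sigma;\mathbb{C}^{2}))$; hence $\mathcal{R}^{2}=4I_{2}$, and $(\mathcal{R}^{*})^{2}=(\mathcal{R}^{2})^{*}=4I_{2}$ by passing to $L^{2}$-adjoints. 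The ``in particular'' is then immediate from $\mathcal{R}\mathcal{R}^{*}-4I_{2}=-\mathcal{R}(\mathcal{R}-\mathcal{R}^{*})$, item~(i), and the boundedness of $\mathcal{R}$ on $H^{s}$. For item~(iii) I would substitute $\mathcal{T}(\sigma\cdot\nu)=-\tfrac{i}{4}\mathcal{R}+\mathcal{K}_{4}(\sigma\cdot\nu)$ and $\mathcal{S}(z^{2}-m^{2})=\Lambda^{-2}+\mathcal{K}_{1}$ into the anticommutation relation; by the mapping properties of $\mathcal{K}_{1},\mathcal{K}_{4},\mathcal{R},\Lambda^{-2},\sigma\cdot\nu$ every resulting term containing $\mathcal{K}_{1}$ or $\mathcal{K}_{4}$ maps $H^{s-1}(\Sigma;\mathbb{C}^{2})$ boundedly into $H^{s+1}(\Sigma;\mathbb{C}^{2})$, hence is compact into $H^{s}(\Sigma;\mathbb{C}^{2})$, leaving that $\mathcal{R}\Lambda^{-2}(\sigma\cdot\nu)+\Lambda^{-2}(\sigma\cdot\nu)\mathcal{R}$ is compact from $H^{s-1}$ to $H^{s}$. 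Using $(\sigma\cdot\nu)\mathcal{R}=-\mathcal{R}^{*}(\sigma\cdot\nu)$ from \eqref{commutator_R_R_star} and the bijectivity of $\sigma\cdot\nu$ on $H^{r}(\Sigma;\mathbb{C}^{2})$, this rewrites as compactness of $\Lambda^{-2}\mathcal{R}^{*}-\mathcal{R}\Lambda^{-2}$ from $H^{s-1}$ to $H^{s}$; finally $\Lambda^{-2}\mathcal{R}-\mathcal{R}\Lambda^{-2}=(\Lambda^{-2}\mathcal{R}^{*}-\mathcal{R}\Lambda^{-2})+\Lambda^{-2}(\mathcal{R}-\mathcal{R}^{*})$ is compact from $H^{s-1}(\Sigma;\mathbb{C}^{2})$ to $H^{s}(\Sigma;\mathbb{C}^{2})$ by item~(i) and the mapping properties of $\Lambda^{-2}$.

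The step I expect to be the main obstacle is obtaining the \emph{exact} identity $\mathcal{R}^{2}=4I_{2}$ in item~(ii): the square relation for $\mathcal{C}_z$ only yields it modulo operators that smooth by a positive number of derivatives, and it is essential to exploit that $\mathcal{R}$ is $z$-independent while both $\mathcal{K}_{4}$ and the prefactor $z^{2}-m^{2}$ degenerate as $z\to m^{-}$. A secondary point requiring care is the identification of the class of the double-layer-type kernel $(x-y)\cdot\nu(y)/|x-y|^{3}$ in item~(i), which is not covered by Lemma~\ref{lemma_homogeneous_kernel} and must be imported from \cite{N01}.
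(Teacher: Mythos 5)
Your proposal is correct, and for items (i) and (iii) it is essentially the argument of the paper: for (i) you compute the kernel of $\mathcal{R}-\mathcal{R}^*$ (your rearrangement with $\nu(y)$ in the scalar term is equivalent to the paper's form with $\nu(x)$), treat the commutator-type part via the Taylor expansion of an extension of $\nu$ and Lemma~\ref{lemma_homogeneous_kernel}, and the double-layer-type part via the geometric fact $(x-y)\cdot\nu = O(|x-y|^2)$ on $\Sigma$ — the paper imports exactly this point from \cite[Proposition~2.8]{OV16} rather than from \cite{N01}, but the content is the same; for (iii) you read off the vanishing off-diagonal block of $-4(\mathcal{C}_z(\alpha\cdot\nu))^2=I_4$, discard the smoothing errors $\mathcal{K}_1,\mathcal{K}_4$ by Rellich, and convert the resulting anticommutator into the commutator using \eqref{commutator_R_R_star} and item (i), which is the paper's computation organized with the raw decomposition \eqref{equation_decomposition_C_z} instead of the packaged representation \eqref{equation_C_z_3d}. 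The genuine difference is in (ii): the paper obtains the exact identity $\mathcal{R}^2=4I_2$ by invoking \cite[Proposition~3.5~(i)]{BEHL18}, i.e.\ the norm limit $\mathcal{C}_m=\lim_{z\to m}\mathcal{C}_z$ with its explicit block form, and then squaring $\mathcal{C}_m(i\alpha\cdot\nu)$; you instead stay at $z\in(-m,m)$, isolate $\tfrac14\mathcal{R}^2$ in the diagonal block, and let $z\to m^-$, using that the $z$-dependent remainders vanish in $\mathcal{B}(L^2)$ — your quantitative check that the kernel of $\mathcal{K}_4$ is $O(m^2-z^2)$ uniformly (from $(1-w)e^{w}-1=O(w^2)$) and that $\mathcal{S}(z^2-m^2)$ stays uniformly bounded is correct, so this limiting argument is sound. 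What your route buys is self-containedness (no appeal to the threshold result of \cite{BEHL18}), at the price of the explicit kernel estimate; the paper's route is shorter by reusing a known limit. Only a cosmetic remark: having $\mathcal{R}^2=4I_2$ on $L^2$, you should note in passing that it persists in every $H^s$ by restriction ($s\ge0$) and density/continuity ($s<0$), which is how the paper implicitly uses it as well.
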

\begin{proof}
  (i) Taking the definitions of $\mathcal{R}$ and $\mathcal{R}^*$ into account, we see that their difference is a singular integral operator with kernel
  \begin{equation} \label{int_kernel}
    \begin{split}
      r(x,y) &- r(y,x)^* = -\frac{\sigma \cdot (x-y)}{\pi |x-y|^3} (\sigma \cdot \nu(y))-(\sigma \cdot \nu(x)) \frac{\sigma \cdot (x-y)}{\pi |x-y|^3} \\
      &= \frac{\sigma \cdot (x-y)}{\pi |x-y|^3} \sigma \cdot (\nu(x)-\nu(y))-2 \frac{\nu(x) \cdot (x-y)}{\pi |x-y|^3}
      =: s_1(x,y) + s_2(x,y).
    \end{split}
  \end{equation}
  Following ideas from the proof of \cite[Proposition~2.8]{OV16}, we remark first that the kernel $s_2(x,y) := -2 \frac{\nu(x) \cdot (x-y)}{\pi |x-y|^3}$ is pseudo-homogeneous of class $-1$, as $\nu(x) \cdot (x-y) \sim |x-y|^2$ for $x \rightarrow y$ (for details see the considerations on the kernel $K_1$ in the proof of \cite[Proposition~2.8]{OV16}). 
  Concerning the first term in the sum in~\eqref{int_kernel}, one gets in a similar way as in the study of~\eqref{def_t_int_kernel} with Lemma~\ref{lemma_homogeneous_kernel} that $s_1(x,y)$ is a pseudo-homogeneous kernel of class $-1$ in the sense of Definition~\ref{definition_pseudo_homogeneous_kernel}. 
  Summing up, we find that $r(x,y) - r(y,x)^*$ is a kernel of class $-1$ and hence, by Proposition~\ref{proposition_phk_mapping_properties} the operator $\mathcal{R}-\mathcal{R}^*: H^{s-1}(\Sigma; \mathbb{C}^2) \rightarrow H^{s}(\Sigma; \mathbb{C}^2)$ is bounded, which is the claim of item~(i).
  
  (ii) First, we show that $\mathcal{R}^2  = 4 I_2$ holds. It is known from \cite[Proposition~3.5~(i)]{BEHL18} that $\mathcal{C}_m:=\lim_{z \rightarrow m} \mathcal{C}_z$ exists with respect to the norm in the space of bounded operators in $L^2(\Sigma; \mathbb{C}^4)$ and the limit is given by
  \begin{equation*}
    \mathcal{C}_m = \frac{1}{4} \begin{pmatrix} 2 m \widehat{S} & -i \mathcal{R} (\sigma \cdot \nu) \\ -i \mathcal{R} (\sigma \cdot \nu) & 0 \end{pmatrix}
  \end{equation*}
  with
  \begin{equation*}
    \widehat{S} \varphi(x) = \int_\Sigma \frac{1}{\pi |x-y|} \varphi(y) \textup{d} \sigma(y), \quad \varphi \in L^2(\Sigma; \mathbb{C}^2).
  \end{equation*}
  Combining this with Proposition~\ref{proposition_C_z_basic_properties}~(iii) yields
  \begin{equation*}
    \begin{split}
      I_4 &= 4 \big(\mathcal{C}_m (i \alpha \cdot \nu)\big)^2 
      = 4 \left( \frac{1}{4} \begin{pmatrix} 2 m \widehat{S} & -i \mathcal{R} (\sigma \cdot \nu) \\ -i \mathcal{R} (\sigma \cdot \nu) & 0 \end{pmatrix} \begin{pmatrix} 0 & i (\sigma \cdot \nu) \\ i (\sigma \cdot \nu) & 0 \end{pmatrix} \right)^2 \\
      &= \frac{1}{4} \begin{pmatrix}  \mathcal{R} & 2 i m \widehat{S} (\sigma \cdot \nu) \\ 0& \mathcal{R} \end{pmatrix}^2 = \frac{1}{4} \begin{pmatrix}  \mathcal{R}^2 & 2 i m \mathcal{R} \widehat{S} (\sigma \cdot \nu) + 2 i m \widehat{S} (\sigma \cdot \nu) \mathcal{R} \\ 0 & \mathcal{R}^2 \end{pmatrix},
    \end{split}
  \end{equation*}
  which shows $\mathcal{R}^2=4 I_2$. By taking the adjoint this implies $(\mathcal{R}^*)^2=4 I_2$. Using this we also find that 
  \begin{equation*}
    \mathcal{R}  \mathcal{R}^* - 4 I_{2} = \mathcal{R} \big(\mathcal{R}^* - \mathcal{R} \big) 
  \end{equation*}
  holds.
  This and the result from item~(i) yield the mapping properties in~\eqref{mapping_properties}.
  
  (iii) First we note that Proposition~\ref{proposition_R_C_3d} implies that
  \begin{equation} \label{equation_C_z}
    \mathcal{C}_z (\alpha \cdot \nu) = \frac{1}{4} \begin{pmatrix} -i \mathcal{R} & 4 (z+m) \Lambda^{-2} (\sigma \cdot \nu) \\ 4 (z-m)  \Lambda^{-2} (\sigma \cdot \nu) & i (\sigma \cdot \nu) \mathcal{R}^* (\sigma \cdot \nu) \end{pmatrix} + \mathcal{K}_1
  \end{equation} 
  holds, where $\mathcal{K}_1: H^{s-1}(\Sigma; \mathbb{C}^4) \rightarrow H^{s}(\Sigma; \mathbb{C}^4)$ is compact. Using this representation in $-4 (\mathcal{C}_z (\alpha \cdot \nu))^2 = I_4$, see Proposition~\ref{proposition_C_z_basic_properties}~(iii), we find that the upper right block of this equation has the form 
  \begin{equation*}
    \begin{split}
      0 &= i (z+m) \mathcal{R} \Lambda^{-2} (\sigma \cdot \nu) - i (z+m) \Lambda^{-2} \mathcal{R}^* (\sigma \cdot \nu) + \mathcal{K}_2 \\
      &= i (z+m) \big( \mathcal{R} \Lambda^{-2} - \Lambda^{-2} \mathcal{R} \big) (\sigma \cdot \nu) + i (z+m) \Lambda^{-2} (\mathcal{R} - \mathcal{R}^*) (\sigma \cdot \nu)+ \mathcal{K}_2
    \end{split}
  \end{equation*}
  with a compact operator $\mathcal{K}_2: H^{s-1}(\Sigma; \mathbb{C}^2) \rightarrow H^{s}(\Sigma; \mathbb{C}^2)$. Since $\Lambda^{-2} (\mathcal{R} - \mathcal{R}^*)$ is bounded from $H^{s-1}(\Sigma; \mathbb{C}^2)$ to $H^{s+1}(\Sigma; \mathbb{C}^2)$ by item~(i) and the mapping properties of $\Lambda$, the last equation and Rellich's embedding theorem yield the claim.
\end{proof}

Next, we show that $\mathcal{R} + \mathcal{R}^*$ can be written as the difference of two projections, which is useful in the analysis of boundary value and transmission problems for the Dirac equation with critical combinations of the coefficients, see, e.g., \cite{BHSS22} for an application.

\begin{theorem} \label{proposition_anti_commutor_R_3d}
  Let $\mathcal{R}$ and $\mathcal{R}^*$ be defined as above and let $s \in [-1, 1]$. Then, there exist a bounded operator $\mathcal{K}: H^{s}(\Sigma; \mathbb{C}^{2}) \rightarrow H^{1}(\Sigma; \mathbb{C}^{2})$ and closed subspaces $\mathcal{H}_\pm$ of $H^{s}(\Sigma; \mathbb{C}^{2})$ satisfying $\dim \mathcal{H}_+ = \dim \mathcal{H}_-$ and $H^{s}(\Sigma; \mathbb{C}^{2}) = \mathcal{H}_+ \dot{+} \mathcal{H}_-$ such that the realization of $\mathcal{R} + \mathcal{R}^*$ in the space $H^{s}(\Sigma; \mathbb{C}^{2})$ can be written as
    \begin{equation*}
      \mathcal{R} + \mathcal{R}^* = 4 P_+ - 4 P_- + \mathcal{K},
    \end{equation*}
    where $P_\pm$ is the projection onto $\mathcal{H}_\pm$.
\end{theorem}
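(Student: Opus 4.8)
The plan is to realize $E:=\tfrac14(\mathcal{R}+\mathcal{R}^*)$, which is a bounded self-adjoint operator in $L^2(\Sigma;\mathbb{C}^2)$, as a self-adjoint involution up to a remainder that gains two derivatives, and then to take $P_\pm$ to be the spectral projections of that involution onto $(0,\infty)$ and $(-\infty,0)$. By Proposition~\ref{proposition_R_C_3d} the operator $E$ is bounded on $H^r(\Sigma;\mathbb{C}^2)$ for every $r$, and since $\mathcal{R}^*$ is the $L^2(\Sigma;\mathbb{C}^2)$-adjoint of $\mathcal{R}$ and both are bounded on $L^2(\Sigma;\mathbb{C}^2)$, the $L^2(\Sigma;\mathbb{C}^2)$-realization of $E$ is bounded and self-adjoint. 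Unlike the two-dimensional situation of Theorem~\ref{proposition_anti_commutor_R_2d}, there is no comparison with an explicit model operator on a sphere available here, so $P_\pm$ must be produced through functional calculus, which is where the Sobolev-scale bookkeeping enters.

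The two algebraic inputs are the following. First, from $\mathcal{R}^2=(\mathcal{R}^*)^2=4I_2$ in Proposition~\ref{proposition_properties_R_3d}(ii) a one-line computation gives
\[
  E^2 = I_2 - \tfrac{1}{16}(\mathcal{R}-\mathcal{R}^*)^2 =: I_2 + \mathcal{K}_0 ,
\]
and by Proposition~\ref{proposition_properties_R_3d}(i) the factor $\mathcal{R}-\mathcal{R}^*$ maps $H^{r-1}(\Sigma;\mathbb{C}^2)$ boundedly into $H^{r}(\Sigma;\mathbb{C}^2)$ for every $r$; hence $\mathcal{K}_0$ maps $H^{r-2}(\Sigma;\mathbb{C}^2)$ boundedly into $H^{r}(\Sigma;\mathbb{C}^2)$, and in particular is compact on $L^2(\Sigma;\mathbb{C}^2)$ and on every $H^r(\Sigma;\mathbb{C}^2)$. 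Second, adding the two relations in~\eqref{commutator_R_R_star} and using $(\sigma\cdot\nu)^2=I_2$ yields the anticommutation $(\sigma\cdot\nu)E=-E(\sigma\cdot\nu)$. From $E^2=I_2+\mathcal{K}_0$ with $\mathcal{K}_0$ compact it follows that $\sigma_{\mathrm{ess}}(E)\subseteq\{-1,1\}$, so $0$ is either in $\rho(E)$ or an isolated eigenvalue of finite multiplicity; moreover a bootstrap in $E^2=I_2+\mathcal{K}_0$ shows that $\mathcal{N}:=\ker E$ consists of $C^\infty$ functions, so the $L^2(\Sigma;\mathbb{C}^2)$-orthogonal projection $\Pi_0$ onto $\mathcal{N}$ is finite rank and smoothing of infinite order.

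Next I would regularize: set $E_1:=E+\Pi_0$, which is still bounded on every $H^r(\Sigma;\mathbb{C}^2)$ and is self-adjoint, boundedly invertible, and $\ge c\,I_2$ on $L^2(\Sigma;\mathbb{C}^2)$ for some $c>0$; since $E\Pi_0=\Pi_0 E=0$ one has $E_1^2=I_2+\widetilde{\mathcal{K}}_0$ with $\widetilde{\mathcal{K}}_0:=\mathcal{K}_0+\Pi_0$ again mapping $H^{r-2}(\Sigma;\mathbb{C}^2)$ boundedly into $H^{r}(\Sigma;\mathbb{C}^2)$. Define $J:=E_1(E_1^2)^{-1/2}$, the sign of $E_1$; it is self-adjoint on $L^2(\Sigma;\mathbb{C}^2)$ with $J^2=I_2$. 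To see that $J$ is bounded on every $H^r(\Sigma;\mathbb{C}^2)$ with $J-E$ gaining two derivatives, I would note that a nonzero eigenvalue of $\widetilde{\mathcal{K}}_0$ on $H^r(\Sigma;\mathbb{C}^2)$ has a smooth eigenfunction, hence is already an eigenvalue of $\widetilde{\mathcal{K}}_0$ on $L^2(\Sigma;\mathbb{C}^2)$, so the spectrum of $\widetilde{\mathcal{K}}_0$ on $H^r(\Sigma;\mathbb{C}^2)$ lies in a compact subset of $(-1,\infty)$. Thus the holomorphic functional calculus applies on each $H^r(\Sigma;\mathbb{C}^2)$ with $t\mapsto(1+t)^{-1/2}$ and yields a bounded operator $(I_2+\widetilde{\mathcal{K}}_0)^{-1/2}=I_2+g(\widetilde{\mathcal{K}}_0)$; writing $(1+t)^{-1/2}-1=t\,h(t)$ with $h$ holomorphic near $[0,\infty)$ gives $g(\widetilde{\mathcal{K}}_0)=\widetilde{\mathcal{K}}_0\,h(\widetilde{\mathcal{K}}_0)$, which maps $H^r(\Sigma;\mathbb{C}^2)$ boundedly into $H^{r+2}(\Sigma;\mathbb{C}^2)$. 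Consequently $J-E=\Pi_0+E_1\,g(\widetilde{\mathcal{K}}_0)$ maps $H^r(\Sigma;\mathbb{C}^2)$ boundedly into $H^{r+2}(\Sigma;\mathbb{C}^2)$ for every $r$.

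It then remains to set $P_\pm:=\tfrac12(I_2\pm J)$, which are bounded projections on every $H^r(\Sigma;\mathbb{C}^2)$ with $P_++P_-=I_2$ and $P_+P_-=0$, and to put $\mathcal{H}_\pm:=P_\pm H^s(\Sigma;\mathbb{C}^2)$; these are closed subspaces with $H^s(\Sigma;\mathbb{C}^2)=\mathcal{H}_+\dot{+}\mathcal{H}_-$, and $\mathcal{R}+\mathcal{R}^*=4E=4P_+-4P_-+\mathcal{K}$ with $\mathcal{K}:=4(E-J)$ mapping $H^s(\Sigma;\mathbb{C}^2)$ boundedly into $H^{s+2}(\Sigma;\mathbb{C}^2)\hookrightarrow H^{1}(\Sigma;\mathbb{C}^2)$ for $s\in[-1,1]$. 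For $\dim\mathcal{H}_+=\dim\mathcal{H}_-$ I would show both are infinite-dimensional: if, say, $\mathcal{H}_+$ were finite-dimensional then $P_+$ would be finite rank, so $J=2P_+-I_2$ and hence $E=J+(E-J)$ would differ from $-I_2$ by a compact operator on $L^2(\Sigma;\mathbb{C}^2)$; conjugating by $\sigma\cdot\nu$ and using $(\sigma\cdot\nu)E(\sigma\cdot\nu)=-E$ would make $-E+I_2$ compact as well, so $2I_2$ would be compact on the infinite-dimensional space $L^2(\Sigma;\mathbb{C}^2)$, which is absurd; the same with $P_-$ handles $\mathcal{H}_-$. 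The step I expect to be the main obstacle is precisely the passage from the $L^2$-functional calculus to uniform boundedness of $J$ (equivalently $P_\pm$) on the whole Sobolev scale together with the two-derivative gain of the remainder — that is, pinning down the spectrum of $\widetilde{\mathcal{K}}_0$ across the spaces $H^r(\Sigma;\mathbb{C}^2)$ and disposing cleanly of the finite-dimensional smooth kernel of $E$.
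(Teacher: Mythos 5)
Your construction is correct, but it takes a genuinely different route from the paper. The paper works directly with the spectral theorem in $L^2(\Sigma;\mathbb{C}^2)$: from $(\mathcal{R}+\mathcal{R}^*)^2=16 I_2+\mathcal{A}^2$ with $\mathcal{A}=i\mathcal{R}-i\mathcal{R}^*$ compact it gets a purely discrete spectrum outside $\{\pm 4\}$ with eigenvalues $\pm\sqrt{16+\lambda_k^2}$, symmetric about $0$ thanks to the anticommutation \eqref{commutator_R_R_star}; it then defines $\widetilde{P}_\pm$ and $\widetilde{\mathcal{K}}$ through the eigenbasis, proves the smoothing property of $\widetilde{\mathcal{K}}$ by introducing the auxiliary Hilbert space $\mathcal{G}=\ran\mathcal{A}$ with a graph-type norm and a closed-graph embedding into $H^1(\Sigma;\mathbb{C}^2)$, and extends $P_\pm$ to $H^s(\Sigma;\mathbb{C}^2)$ via the identity $\widetilde{P}_\pm=\pm\tfrac18(\mathcal{R}+\mathcal{R}^*\pm 4I_2-\widetilde{\mathcal{K}})$. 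You instead build the sign operator $J$ of $E=\tfrac14(\mathcal{R}+\mathcal{R}^*)$ by holomorphic functional calculus applied to $\widetilde{\mathcal{K}}_0$ on each $H^r(\Sigma;\mathbb{C}^2)$ and set $P_\pm=\tfrac12(I_2\pm J)$; the consistency issue you flag (locating $\sigma(\widetilde{\mathcal{K}}_0)$ on $H^r$ and matching the calculi across the scale) is indeed the crux, and your bootstrap argument for eigenfunctions settles it, since resolvents of $\widetilde{\mathcal{K}}_0$ on the different spaces agree by the same bootstrap. Your approach buys a stronger conclusion ($\mathcal{K}$ gains two derivatives, $H^s\rightarrow H^{s+2}$, on the whole scale $s\in\mathbb{R}$, not just $s\in[-1,1]$ into $H^1$), while the paper's eigenbasis description of $\mathcal{H}_\pm$ is what feeds the later Remark~\ref{remark_isomorphism} and the singular-sequence proposition, and it also yields $\dim\mathcal{H}_+=\dim\mathcal{H}_-$ directly from the spectral symmetry under $\sigma\cdot\nu$, whereas you argue both spaces are infinite dimensional (which suffices in a separable Hilbert space). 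Two small points: your assertion $E_1\ge c\,I_2$ is false as written ($E_1$ has spectrum near $-1$); what you need and what your argument actually uses is $E_1^2\ge c\,I_2$, which follows from bounded invertibility of the self-adjoint operator $E_1$. Moreover, the regularization by $\Pi_0$ is superfluous: since $(\mathcal{R}+\mathcal{R}^*)^2=16I_2+\mathcal{A}^2\ge 16 I_2$, one has $E^2\ge I_2$, so $\ker E=\{0\}$ and $E$ itself is already boundedly invertible, and your $P_\pm$ then coincide with the paper's spectral projections on the common dense subspaces.
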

\begin{proof}
  The proof of this result is separated in four steps. In \textit{Step 1} we consider the realization of $\mathcal{R} + \mathcal{R}^*$ in the space $L^2(\Sigma; \mathbb{C}^2)$ and prove a spectral representation that is suitable to define the map $\mathcal{K}$. In \textit{Step 2} we show an auxiliary embedding result, which is then used in \textit{Step 3} to conclude the mapping properties of $\mathcal{K}$. Finally, in \textit{Step 4} we show the claims about $\mathcal{H}_\pm$ and $P_\pm$. Throughout the proof, we will denote by $(\cdot, \cdot)$ the inner product in $L^2(\Sigma; \mathbb{C}^2)$.
  
  \textit{Step 1:} Consider $\mathcal{R}$ and $\mathcal{R}^*$ as bounded operators in $L^2(\Sigma; \mathbb{C}^2)$. Then, Proposition~\ref{proposition_properties_R_3d}~(ii) implies that
  \begin{equation} \label{equation_square}
    (\mathcal{R} + \mathcal{R}^*)^2 = \mathcal{R}^2 + (\mathcal{R}^*)^2 + \mathcal{R}^* \mathcal{R} + \mathcal{R} \mathcal{R}^* = 16 I_2 + \mathcal{A}^2,
  \end{equation}
  where we have set
  \begin{equation*}
    \mathcal{A} := i \mathcal{R} - i \mathcal{R}^*.
  \end{equation*}
  By definition, $\mathcal{A}$ is a bounded and self-adjoint operator in $L^2(\Sigma; \mathbb{C}^2)$ and by Proposition~\ref{proposition_properties_R_3d}~(i) $\mathcal{A}$ gives rise to a bounded operator from $L^2(\Sigma; \mathbb{C}^2)$ to $H^1(\Sigma; \mathbb{C}^2)$ and hence, by Rellich's embedding theorem, $\mathcal{A}$ is compact in $L^2(\Sigma; \mathbb{C}^2)$. Thus, the spectral theorem implies that
  \begin{equation} \label{spectrum_inclusion}
    \sigma(\mathcal{R} + \mathcal{R}^*) \subset \left\{ \pm \sqrt{16 + \lambda^2}: \lambda \in \sigma(\mathcal{A}) \right\}
  \end{equation}
  and $\sigma(\mathcal{R} + \mathcal{R}^*) \setminus \{ \pm 4 \}$ is purely discrete. We claim that the inclusion in~\eqref{spectrum_inclusion} is actually an equality. To see this, it suffices to show that $\sigma_\textup{p}(\mathcal{R} + \mathcal{R}^*)$ is symmetric with respect to the origin. Let $\mu \in \sigma_\textup{p}(\mathcal{R} + \mathcal{R}^*)$ and $e \in L^2(\Sigma; \mathbb{C}^2)$ such that $(\mathcal{R} + \mathcal{R}^*) e = \mu e$. With~\eqref{commutator_R_R_star} we see that
  \begin{equation} \label{equation_isomorphism}
    (\mathcal{R} + \mathcal{R}^*) (\sigma \cdot \nu) e = -(\sigma \cdot \nu) (\mathcal{R} + \mathcal{R}^*)  e = -\mu (\sigma \cdot \nu) e,
  \end{equation}
  i.e. also $-\mu \in \sigma_\textup{p}(\mathcal{R} + \mathcal{R}^*)$ and hence
  \begin{equation*} 
    \sigma(\mathcal{R} + \mathcal{R}^*) = \left\{ \pm \sqrt{16 + \lambda^2}: \lambda \in \sigma(\mathcal{A}) \right\}.
  \end{equation*}
  Thus, if we denote by $\mu_k = \text{sign} \, \mu_k \cdot \sqrt{16+\lambda_k^2}$ all eigenvalues of $\mathcal{R} + \mathcal{R}^*$ with some $\lambda_k \in \sigma_\textup{p}(\mathcal{A})$ and by $e_k^\pm$ the corresponding eigenfunctions (where the superscript corresponds to the sign of the associated eigenvalue and which are by~\eqref{equation_square} also eigenfunctions of $\mathcal{A}$), and that are by the spectral theorem an orthonormal basis in $L^2(\Sigma; \mathbb{C}^2)$, we can write
  \begin{equation} \label{decomposition_R_R_star}
    \begin{split}
      (\mathcal{R} + \mathcal{R}^*) \varphi = \sum_{k} \mu_k (\varphi,e_k^\pm)e_k^\pm = 4 \widetilde{P}_+ \varphi -4 \widetilde{P}_- \varphi + \widetilde{\mathcal{K}} \varphi
    \end{split}
  \end{equation}
  with 
  \begin{equation} \label{def_K}
    \begin{split}
      \widetilde{P}_+ \varphi &:= \sum_{\mu_k>0} (\varphi, e_k^+) e_k^+, \qquad \widetilde{P}_- \varphi := \sum_{\mu_k<0} (\varphi, e_k^-) e_k^-, \\
      \widetilde{\mathcal{K}} \varphi &:= \sum_{\mu_k>0} \Big( \sqrt{16 + \lambda_k^2} - 4 \Big) (\varphi,e_k^+) e_k^+ +  \sum_{\mu_k<0} \Big( 4-\sqrt{16 + \lambda_k^2} \Big) (\varphi,e_k^-) e_k^-.
    \end{split}
  \end{equation}
  It remains to show that $\widetilde{P}_\pm$ give rise to bounded operators $P_\pm$ in $H^{s}(\Sigma; \mathbb{C}^2)$, that $\mathcal{H}_\pm := \ran P_\pm$ fulfil $H^{s}(\Sigma; \mathbb{C}^2) = \mathcal{H}_+ \dot{+} \mathcal{H}_-$, and that $\widetilde{\mathcal{K}}$ can be extended to a bounded operator from $H^{s}(\Sigma; \mathbb{C}^2)$ to $H^{1}(\Sigma; \mathbb{C}^2)$.
  
  \textit{Step 2:} Define the space $\mathcal{G} := \ran \mathcal{A}$ and endow it with the norm
  \begin{equation*}
    \begin{split}
      \| \varphi \|_\mathcal{G}^2 &= \big\| (\mathcal{A} \upharpoonright (L^2(\Sigma; \mathbb{C}^2) \ominus \ker \mathcal{A}))^{-1} \varphi \big\|_{L^2(\Sigma; \mathbb{C}^2)}^2 + \| \varphi \|_{L^2(\Sigma; \mathbb{C}^2)}^2 \\
      &=\sum_{\lambda_k \in \sigma(\mathcal{A}) \setminus \{ 0 \}} (\lambda_k^{-2} + 1) |(\varphi,e_k^\pm)|^2. 
    \end{split}  
  \end{equation*}
  Remark that $(\mathcal{A} \upharpoonright (L^2(\Sigma; \mathbb{C}^2) \ominus \ker \mathcal{A}))^{-1}$ is self-adjoint in the Hilbert space $\overline{\ran \mathcal{A}}$ and hence, it is closed and $\mathcal{G}$ is a Hilbert space. Clearly, those eigenfunctions $e_k^\pm$ of $\mathcal{A}$ that correspond to eigenvalues $\lambda_k \neq 0$ are an orthogonal basis   in $\mathcal{G}$.
  Note that Proposition~\ref{proposition_properties_R_3d}~(i) implies that $\mathcal{G} = \ran (\mathcal{R} - \mathcal{R}^*) \subset H^1(\Sigma; \mathbb{C}^2)$. In this step, we show that the embedding $\iota: \mathcal{G} \rightarrow H^1(\Sigma; \mathbb{C}^2)$ is continuous. 
  
  For that, we show that $\iota$ is closed. So let $(\varphi_n) \subset \mathcal{G}$ such that $\varphi_n \rightarrow \varphi$ in $\mathcal{G}$ and $\iota \varphi_n = \varphi_n \rightarrow \psi$ in $H^1(\Sigma; \mathbb{C}^2)$. Then, as $\mathcal{G}$ is complete $\varphi \in \mathcal{G}$ and by the same argument $\psi \in H^1(\Sigma; \mathbb{C}^2)$. Since both $\mathcal{G}$ and $H^1(\Sigma; \mathbb{C}^2)$ are boundedly embedded in $L^2(\Sigma; \mathbb{C}^2)$, we have that
  $(\varphi_n)$ converges to $\varphi$ and $\psi$ in $L^2(\Sigma; \mathbb{C}^2)$. This can only be true if $\varphi=\psi$. This finishes the proof that $\iota$ is closed and thus, continuous.
  
  \textit{Step 3:} In this step we prove that the map $\widetilde{\mathcal{K}}$ introduced in~\eqref{def_K} can be extended to a bounded operator from $H^{-1}(\Sigma; \mathbb{C}^2)$ to $H^1(\Sigma; \mathbb{C}^2)$, which gives then rise to a bounded mapping $\mathcal{K}: H^{s}(\Sigma; \mathbb{C}^2)  \rightarrow H^{1}(\Sigma; \mathbb{C}^2)$.
  
  To show the stated boundedness property, let $\varphi \in L^2(\Sigma; \mathbb{C}^2)$ be fixed. We claim that $\widetilde{\mathcal{K}} \varphi \in \mathcal{G}$. To see this, we use that the definition of $\widetilde{\mathcal{K}}$ in~\eqref{def_K} yields $|(\widetilde{\mathcal{K}} \varphi,e_k^\pm)| = (\sqrt{16+\lambda_k^2}-4)|(\varphi, e_k^\pm)|$ and thus,
  \begin{equation} \label{boundedness_K}
    \begin{split}
      \| \widetilde{\mathcal{K}} \varphi \|_{\mathcal{G}}^2
      &= \sum_{\lambda_k \in \sigma(\mathcal{A}) \setminus \{ 0 \}} (\lambda_k^{-2} + 1) |(\widetilde{\mathcal{K}} \varphi,e_k^\pm)|^2 \\
      &= \sum_{\lambda_k \in \sigma(\mathcal{A}) \setminus \{ 0 \}} (\lambda_k^{-2} + 1) \left(\frac{\lambda_k^2}{\sqrt{16+\lambda_k^2}+4}\right)^2 |(\varphi,e_k^\pm)|^2 \\
      &\leq c_1 \sum_{\lambda_k \in \sigma(\mathcal{A}) \setminus \{ 0 \}} \lambda_k^2 |(\varphi,e_k^\pm)|^2 = c_1\| \mathcal{A} \varphi \|_{L^2(\Sigma; \mathbb{C}^2)}^2 < \infty.
    \end{split}
  \end{equation}
  Therefore, we have shown $\widetilde{\mathcal{K}} \varphi \in \mathcal{G}$.  
  Combining now~\eqref{boundedness_K} with Proposition~\ref{proposition_properties_R_3d}~(i) (applied with $s=0$) and the result in \textit{Step~2} we conclude
  \begin{equation*}
    \begin{split}
      \| \widetilde{\mathcal{K}} \varphi \|_{H^1(\Sigma; \mathbb{C}^2)}^2 &\leq c_2 \| \widetilde{\mathcal{K}} \varphi \|_{\mathcal{G}}^2
      \leq c_3\| \mathcal{A} \varphi \|_{L^2(\Sigma; \mathbb{C}^2)}^2 \leq c_4 \| \varphi \|_{H^{-1}(\Sigma; \mathbb{C}^2)}. 
    \end{split}
  \end{equation*}
  Since $L^2(\Sigma; \mathbb{C}^2)$ is a dense subspace of $H^{-1}(\Sigma; \mathbb{C}^2)$, the latter estimate shows that $\widetilde{\mathcal{K}}$ can indeed be extended to a bounded map from $H^{-1}(\Sigma; \mathbb{C}^2)$ to $H^1(\Sigma; \mathbb{C}^2)$.
  
  \textit{Step 4:} It remains to show that $\widetilde{P}_\pm$ give rise to bounded operators $P_\pm$ in $H^{s}(\Sigma; \mathbb{C}^2)$ and that $\mathcal{H}_\pm := \ran P_\pm$ fulfil $H^{s}(\Sigma; \mathbb{C}^2) = \mathcal{H}_+ \dot{+} \mathcal{H}_-$. To see the first claim, remark that the eigenfunctions $e_k^\pm$ of $\mathcal{R} + \mathcal{R}^*$ form an orthonormal basis of $L^2(\Sigma; \mathbb{C}^2)$ and that the definition of $\widetilde{P}_\pm$ in~\eqref{def_K} implies $\widetilde{P}_+ + \widetilde{P}_- = I_2$. Hence, formula~\eqref{decomposition_R_R_star} yields
  \begin{equation} \label{relation_P_pm_R}
    \widetilde{P}_\pm = \pm \frac{1}{8} \big( \mathcal{R} + \mathcal{R}^* \pm 4 I_2 - \widetilde{\mathcal{K}} \big).
  \end{equation}
  Since the operator on the right hand side gives rise to a continuous map in $H^{s}(\Sigma; \mathbb{C}^2)$, the same is true for the left hand side and we denote this realization by $P_\pm$. 
  
  It remains to show that the direct sum decomposition $H^s(\Sigma; \mathbb{C}^2) = \mathcal{H}_+ \dot{+} \mathcal{H}_-$ holds. First, we note that~\eqref{relation_P_pm_R} implies $P_+ + P_- = I_2$ and hence, $H^s(\Sigma; \mathbb{C}^2) = \mathcal{H}_+ + \mathcal{H}_-$. To see that the sum is direct, assume that $\varphi \in \mathcal{H}_+ \cap \mathcal{H}_-$. Then, there exist $\varphi_-, \varphi_+ \in H^s(\Sigma; \mathbb{C}^2)$ such that
  \begin{equation} \label{equation_intersection}
    \varphi = P_+ \varphi_+ = P_- \varphi_-.
  \end{equation}
  Note that the definition of $\widetilde{P}_\pm$ in~\eqref{def_K} implies that $\widetilde{P}_\pm \widetilde{P}_\mp = 0$. Thus, also $P_\pm P_\mp = 0$. An Application of $P_\pm$ to~\eqref{equation_intersection} shows then $P_\pm \varphi = 0$, i.e. $\varphi = P_+ \varphi + P_- \varphi = 0$. Therefore, $H^s(\Sigma; \mathbb{C}^2) = \mathcal{H}_+ \dot{+} \mathcal{H}_-$ as a direct sum and all claims are shown.
\end{proof}

\begin{remark} \label{remark_isomorphism}
  It follows from the proof of Theorem~\ref{proposition_anti_commutor_R_3d} that $\sigma \cdot \nu$ is an isomorphism from $\mathcal{H}_\pm$ to $\mathcal{H}_\mp$. Indeed, it is shown in~\eqref{equation_isomorphism} that $(\sigma \cdot \nu) \ker (\mathcal{R} + \mathcal{R}^* - \mu) = \ker (\mathcal{R} + \mathcal{R}^* + \mu)$ holds for any $\mu \neq 0$. Since $\mathcal{H}_\pm$ is defined for $s \in [-1,0]$ as the completion of 
  \begin{equation*}
    \begin{split}
      \ran \widetilde{P}_\pm = \textup{span} \big\{ u \in L^2(\Sigma; \mathbb{C}^2): u \in \ker (\mathcal{R} + \mathcal{R}^* - \mu) \text{ for } \pm \mu > 0 \big\}
    \end{split}
  \end{equation*}
  in $H^{s}(\Sigma; \mathbb{C}^2)$ and for $s \in [0,1]$ as the completion of
  \begin{equation*}
    \begin{split} 
      \textup{span} \big\{ u \in L^2(\Sigma; \mathbb{C}^2): u \in \ker (\mathcal{R} + \mathcal{R}^* - \mu) \text{ for } \pm \mu > 4 \big\} &\\
      + \big(\ker(\mathcal{R} + \mathcal{R}^* \mp 4) \cap H^s(\Sigma; \mathbb{C}^2) &\big)
    \end{split}
  \end{equation*}
  in $H^{s}(\Sigma; \mathbb{C}^2)$,
  cf.~\eqref{def_K}, and $\sigma \cdot \nu$ gives rise to a bounded map in $H^{s}(\Sigma; \mathbb{C}^2)$, this map is indeed an isomorphism between $\mathcal{H}_\pm$ and $\mathcal{H}_\mp$.
\end{remark}

Eventually, we state a result about a special operator in $L^2(\Sigma; \mathbb{C}^2)$ involving the realizations $\widetilde{\mathcal{R}}, \widetilde{\mathcal{R}}^*$ of $\mathcal{R}, \mathcal{R}^*$ in $H^{-1/2}(\Sigma; \mathbb{C}^2)$. In the following proposition we show that zero belongs to the essential spectrum of $\Lambda (\widetilde{\mathcal{R}} \widetilde{\mathcal{R}}^* - 4 I_2) \Lambda$ under some suitable assumptions on the surface $\Sigma$. Remark that in the recent paper \cite{BP22}, that was developed independently from this note, a related result was shown with pseudodifferential techniques.

\begin{proposition}
  Assume that there exists an open subset $\Sigma_0 \subset \Sigma$ such that $\Sigma_0$ is contained in a plane. Then, there exists a sequence $(\varphi_n) \subset L^2(\Sigma; \mathbb{C}^2)$ such that $\| \varphi_n \|_{L^2(\Sigma; \mathbb{C}^2)} = 1$, $\varphi_n$ converges weakly to zero, and 
  $\Lambda (\widetilde{\mathcal{R}} \widetilde{\mathcal{R}}^* - 4 I_2) \Lambda \varphi_n \rightarrow 0$ in $L^2(\Sigma; \mathbb{C}^2)$. It is possible to choose this sequence such that $(\Lambda \varphi_n) \subset \mathcal{H}_\pm$.
\end{proposition}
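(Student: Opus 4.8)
We sketch the construction. The plan is to build a highly oscillatory wave packet supported on the flat piece $\Sigma_0$ and to exploit that on a plane the operators $\mathcal{R},\mathcal{R}^*$ collapse to genuine Riesz-type convolutions on $\mathbb{R}^2$ whose Fourier multipliers multiply to $4I_2$.

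\emph{Planar model.} After a rigid motion one may assume $\Sigma_0\subset\{x_3=0\}$, that $0$ is a relative interior point of $\Sigma_0$, and that $\nu\equiv(0,0,1)$ on $\Sigma_0$; we identify a neighbourhood of $0$ in $\Sigma_0$ with a disc in $\mathbb{R}^2$. For $\varphi\in C^\infty$ supported in a small compact subset of $\Sigma_0$ and $x$ in the plane one has $\mathcal{R}\varphi(x)=(\widetilde r_{\mathcal{R}}*\varphi)(x)$, $\mathcal{R}^*\varphi(x)=(\widetilde r_{\mathcal{R}^*}*\varphi)(x)$, with the $2\times2$ matrix-valued $\mathbb{R}^2$-convolution kernels $\widetilde r_{\mathcal{R}}(t)=-\pi^{-1}|t|^{-3}(\sigma_1t_1+\sigma_2t_2)\sigma_3$ and $\widetilde r_{\mathcal{R}^*}(t)=\pi^{-1}|t|^{-3}\sigma_3(\sigma_1t_1+\sigma_2t_2)$, whose Fourier multipliers $m_{\mathcal{R}}(\xi)=2i(\sigma_1\widehat\xi_1+\sigma_2\widehat\xi_2)\sigma_3$ and $m_{\mathcal{R}^*}(\xi)=m_{\mathcal{R}}(\xi)^*$, $\widehat\xi=\xi/|\xi|$, satisfy — using $\sigma_j\sigma_3=-\sigma_3\sigma_j$ ($j=1,2$) and $(\sigma_1\widehat\xi_1+\sigma_2\widehat\xi_2)^2=I_2$ — the two identities driving everything,
\begin{equation*}
  m_{\mathcal{R}}(\xi)\,m_{\mathcal{R}^*}(\xi)=4I_2,\qquad m_{\mathcal{R}}(\xi)+m_{\mathcal{R}^*}(\xi)=-4i\,\sigma_3(\sigma_1\widehat\xi_1+\sigma_2\widehat\xi_2)=:\mathfrak m(\widehat\xi),
\end{equation*}
where $\mathfrak m(\widehat\xi)$ is Hermitian with $\mathfrak m(\widehat\xi)^2=16I_2$. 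Fix a unit vector $\omega\in\mathbb{R}^2$, unit vectors $v_\pm\in\mathbb{C}^2$ with $\mathfrak m(\omega)v_\pm=\pm4v_\pm$, a $\psi\in C^\infty_c(\mathbb{R}^2)$ with $\|\psi\|_{L^2}=1$ and support a small disc around $0$ contained in $\Sigma_0$, and put $\phi_n(x):=\psi(x)e^{in\omega\cdot x}v_\pm$, extended by $0$ to $\Sigma$; then $\|\phi_n\|_{L^2(\Sigma;\mathbb{C}^2)}=1$, $\supp\phi_n\subset\Sigma_0$, $\widehat{\phi_n}$ is concentrated near $n\omega$, and $\phi_n\rightharpoonup0$.

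\emph{Basic Weyl sequence.} First I would prove the key estimate: for any wave packet $g$ of the above type (supported in a fixed compact subset of $\Sigma_0$, frequency localized near $n\omega$, with $\|g\|_{L^2}$ at most polynomial in $n$), one has $(\widetilde{\mathcal{R}}\,\widetilde{\mathcal{R}}^*-4I_2)g=O(n^{-\infty})$ in every $H^t(\Sigma;\mathbb{C}^2)$. This follows from $m_{\mathcal{R}}m_{\mathcal{R}^*}=4I_2$ (so on $\mathbb{R}^2$ the composition $\widetilde r_{\mathcal{R}}*(\widetilde r_{\mathcal{R}^*}*\,\cdot\,)$ is $4I_2$) together with the fact that $\mathcal{R}^*g$ and $\mathcal{R}\mathcal{R}^*g$ coincide on $\Sigma_0$ with the planar convolutions, while the truncation to $\Sigma_0$ and all contributions of the integrals over the curved part $\Sigma\setminus\Sigma_0$ are given by operators with smooth kernels applied to $g$, hence $O(n^{-\infty})$ because $g$ is frequency localized near $|\xi|\sim n$. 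Then I take $\varphi_n:=\phi_n$ and write $\Lambda\varphi_n=\chi\Lambda\varphi_n+(1-\chi)\Lambda\varphi_n=:g_n+e_n$ with $\chi\in C^\infty_c(\Sigma_0)$, $\chi\equiv1$ near $0$. Since $\Lambda$ is a classical pseudodifferential operator of order $\tfrac12$ (a power of the elliptic pseudodifferential operator $\mathcal{S}(-1)^{-1}+c_\Lambda$ of order $1$), it is pseudolocal and frequency preserving, so $e_n=O(n^{-\infty})$ in all $H^t$ and $g_n$ is again a packet of the above type with $\|g_n\|_{L^2}\lesssim n^{1/2}$; hence $(\widetilde{\mathcal{R}}\,\widetilde{\mathcal{R}}^*-4I_2)g_n=O(n^{-\infty})$ in $H^{1/2}(\Sigma;\mathbb{C}^2)$ by the key estimate, and $(\widetilde{\mathcal{R}}\,\widetilde{\mathcal{R}}^*-4I_2)e_n=O(n^{-\infty})$ in $H^{1/2}(\Sigma;\mathbb{C}^2)$ by Proposition~\ref{proposition_properties_R_3d}(ii). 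Applying the bounded map $\Lambda\colon H^{1/2}(\Sigma;\mathbb{C}^2)\to L^2(\Sigma;\mathbb{C}^2)$ gives $\Lambda(\widetilde{\mathcal{R}}\,\widetilde{\mathcal{R}}^*-4I_2)\Lambda\varphi_n\to0$ in $L^2$, which together with $\|\varphi_n\|_{L^2}=1$ and $\varphi_n\rightharpoonup0$ proves the first assertion.

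\emph{Refinement to $\mathcal{H}_\pm$, and main obstacle.} Keeping $v=v_\pm$ in $\phi_n$, put $\varphi_n:=\Lambda^{-1}P_\pm\Lambda\phi_n/\|\Lambda^{-1}P_\pm\Lambda\phi_n\|_{L^2}$, so $\Lambda\varphi_n$ is a scalar multiple of $P_\pm\Lambda\phi_n\in\mathcal{H}_\pm$. To see this is well defined and that $\varphi_n\rightharpoonup0$ it suffices to show $\|\Lambda^{-1}P_\mp\Lambda\phi_n\|_{L^2}\to0$: using the formula $P_\mp=\mp\tfrac18(\widetilde{\mathcal{R}}+\widetilde{\mathcal{R}}^*\mp4I_2-\mathcal{K})$ from the proof of Theorem~\ref{proposition_anti_commutor_R_3d}, the fact that on $\Sigma_0$ the operator $\widetilde{\mathcal{R}}+\widetilde{\mathcal{R}}^*$ acts by the multiplier $\mathfrak m(\widehat\xi)$ — which equals $\pm4$ on $v_\pm$ at $\widehat\xi=\omega$ and therefore, on the $v_\pm$-polarized packet $g_n=\chi\Lambda\phi_n$ whose Fourier support obeys $|\widehat\xi-\omega|\lesssim n^{-1}$, differs from $\pm4I_2$ by an $O(n^{-1})\|g_n\|_{L^2}$ term in $L^2$ — and the boundedness $\mathcal{K}\colon H^{-1}(\Sigma;\mathbb{C}^2)\to H^1(\Sigma;\mathbb{C}^2)$ (so $\|\mathcal{K}g_n\|_{L^2}\lesssim n^{-1}\|g_n\|_{L^2}$, $g_n$ being frequency localized near $n\omega$), one obtains, together with the $O(n^{-\infty})$ curved-part estimates and $P_\mp e_n=O(n^{-\infty})$, that $\|P_\mp\Lambda\phi_n\|_{L^2}\lesssim n^{-1}\|g_n\|_{L^2}+O(n^{-\infty})\lesssim n^{-1/2}$; hence $\|\Lambda^{-1}P_\mp\Lambda\phi_n\|_{L^2}\lesssim\|P_\mp\Lambda\phi_n\|_{H^{-1/2}}\le\|P_\mp\Lambda\phi_n\|_{L^2}\to0$ and $\|\Lambda^{-1}P_\pm\Lambda\phi_n\|_{L^2}=\|\phi_n-\Lambda^{-1}P_\mp\Lambda\phi_n\|_{L^2}\in[\tfrac12,2]$ for $n$ large. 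Finally, writing $P_\pm\Lambda\phi_n=\Lambda\phi_n-P_\mp\Lambda\phi_n$, the $\Lambda\phi_n$-part of $\Lambda(\widetilde{\mathcal{R}}\,\widetilde{\mathcal{R}}^*-4I_2)P_\pm\Lambda\phi_n$ is $O(n^{-\infty})$ in $L^2$ exactly as in the basic step, while for the $P_\mp\Lambda\phi_n$-part one uses $\widetilde{\mathcal{R}}^2=4I_2$ (Proposition~\ref{proposition_properties_R_3d}(ii)) to write $\widetilde{\mathcal{R}}\,\widetilde{\mathcal{R}}^*-4I_2=\widetilde{\mathcal{R}}(\widetilde{\mathcal{R}}^*-\widetilde{\mathcal{R}})$ and Proposition~\ref{proposition_properties_R_3d}(i) (boundedness of $\widetilde{\mathcal{R}}^*-\widetilde{\mathcal{R}}\colon H^{-1/2}\to H^{1/2}$) to get $\|(\widetilde{\mathcal{R}}\,\widetilde{\mathcal{R}}^*-4I_2)P_\mp\Lambda\phi_n\|_{H^{1/2}}\lesssim\|P_\mp\Lambda\phi_n\|_{H^{-1/2}}\to0$; applying $\Lambda\colon H^{1/2}\to L^2$ and dividing by the denominator yields $\Lambda(\widetilde{\mathcal{R}}\,\widetilde{\mathcal{R}}^*-4I_2)\Lambda\varphi_n\to0$. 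The conceptual point is the cancellation $m_{\mathcal{R}}m_{\mathcal{R}^*}=4I_2$ on a plane; the step I expect to be most delicate is the key estimate — proving that for wave packets concentrated in $\Sigma_0$ the truncation of the planar convolutions to $\Sigma_0$ and all interactions with the curved part are negligible to infinite order, which requires the exact identification of $\mathcal{R},\mathcal{R}^*$ on the plane, the pseudolocality and frequency preservation of $\Lambda$, and careful tracking of the powers of $n$ (with $\Lambda^{\pm1}$ changing the $L^2$-size of a frequency-$n$ packet by $n^{\pm1/2}$).
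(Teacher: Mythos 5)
Your route is genuinely different from the paper's, and its core computations are sound: on the flat piece the kernels of $\mathcal{R},\mathcal{R}^*$ do reduce to the planar convolution kernels you write down, the multiplier identities $m_{\mathcal{R}}m_{\mathcal{R}^*}=4I_2$ and $\mathfrak m(\widehat\xi)^2=16I_2$ are correct, and the polarization trick with $v_\pm$ together with the formula \eqref{relation_P_pm_R} and the $H^{-1}\to H^1$ bound for $\widetilde{\mathcal{K}}$ is a legitimate way to place the sequence (asymptotically) in $\mathcal{H}_+$ or $\mathcal{H}_-$ for either prescribed sign. The paper argues quite differently and much more softly: it never constructs a Weyl sequence explicitly, but shows $0\in\sigma_{\textup{ess}}(\Lambda(\widetilde{\mathcal{R}}\widetilde{\mathcal{R}}^*-4I_2)\Lambda)$ by a range argument — since the kernel $r(y,x)^*-r(x,y)$ vanishes for $x,y\in\Sigma_0$, the output of $(\widetilde{\mathcal{R}}^*-\widetilde{\mathcal{R}})\Lambda$ has $H^1$ regularity on a subpatch $\Sigma_1$, and as $\Lambda\widetilde{\mathcal{R}}$ is a bijection from $H^{1/2}$ onto $L^2$ (Proposition~\ref{proposition_properties_R_3d}~(ii)) the range of $\Lambda\widetilde{\mathcal{R}}(\widetilde{\mathcal{R}}^*-\widetilde{\mathcal{R}})\Lambda$ cannot have finite codimension — and then obtains the $\mathcal{H}_\pm$ refinement by projecting with $P_\pm$ and flipping the sign via the isomorphism $\sigma\cdot\nu:\mathcal{H}_\pm\to\mathcal{H}_\mp$ of Remark~\ref{remark_isomorphism}. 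What your approach buys is an explicit, quantitative singular sequence with prescribed frequency and polarization, and both signs obtained directly; what the paper's buys is brevity and the fact that it only uses ingredients already established via pseudo-homogeneous kernels.

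The price of your route is that its two load-bearing steps are asserted rather than proved, and both need machinery the paper (and your sketch) does not provide. First, you repeatedly use that $\Lambda=(\mathcal{S}(-1)^{-1}+c_\Lambda)^{1/2}$ is a classical pseudodifferential operator of order $\tfrac12$, hence pseudolocal and frequency preserving; the paper only establishes mapping properties of $\Lambda$ by abstract operator theory, and the pseudodifferential nature of the operator square root is Seeley's theorem on fractional powers of elliptic operators — true, but a substantial external input that must be cited or proved, since without it neither $e_n=(1-\chi)\Lambda\phi_n=O(n^{-\infty})$ nor the frequency localization of $g_n=\chi\Lambda\phi_n$ (which you use to estimate $\|g_n\|_{H^{-1}}$, $\|\widetilde{\mathcal{K}}g_n\|_{L^2}$ and the multiplier error $O(n^{-1})$) is justified. (This dependence could be reduced by taking $\Lambda\varphi_n$ itself to be the explicit packet, i.e. $\varphi_n=\Lambda^{-1}g_n/\|\Lambda^{-1}g_n\|$, but then the weak convergence and normalization need a separate argument.) Second, the ``key estimate'' $(\widetilde{\mathcal{R}}\widetilde{\mathcal{R}}^*-4I_2)g=O(n^{-\infty})$ is the heart of the proof and is only sketched: one must define precisely the packet class, insert smooth cutoffs to compare $\mathcal{R}\mathcal{R}^*g$ with the full planar composition, and control both the evaluation on the curved part and the ``missing region'' $\mathbb{R}^2\setminus\Sigma_0$ in the inner integral (the planar convolution of $g$ decays only like $|x|^{-2}$ there, so the comparison is not a trivial smooth-kernel remark, although it does work out by non-stationary phase). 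None of these obstacles looks fatal — the plan can be completed — but as written the proof is a program whose decisive estimates remain to be carried out, whereas the paper's argument closes these issues without any microlocal input.
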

\begin{proof}
  The proof of this proposition is split into two steps. In \textit{Step~1} we follow closely arguments from \cite[Theorem~5.9~(i)]{BH20} to show that there exists a singular sequence $(\varphi_n)$ for $\Lambda (\widetilde{\mathcal{R}} \widetilde{\mathcal{R}}^* - 4 I_2) \Lambda$, while in \textit{Step~2} we prove that this sequence $(\varphi_n)$ can be chosen such that $\Lambda \varphi_n \in \mathcal{H}_\pm$.

  \textit{Step~1:} We show the existence of the singular sequence for $\Lambda (\widetilde{\mathcal{R}} \widetilde{\mathcal{R}}^* - 4 I_2) \Lambda$. For this we note that by Proposition~\ref{proposition_properties_R_3d}~(ii) the operator $\Lambda (\widetilde{\mathcal{R}} \widetilde{\mathcal{R}}^* - 4 I_2) \Lambda$ is bounded and self-adjoint in $L^2(\Sigma; \mathbb{C}^2)$  and hence, the claim follows, if we can show that $0 \in \sigma_\textup{ess}(\Lambda (\widetilde{\mathcal{R}} \widetilde{\mathcal{R}}^* - 4 I_2) \Lambda)$, which is true, if 
  \begin{equation} \label{condition_0_sigma_ess}
    \nexists \mathcal{G} \subset L^2(\Sigma; \mathbb{C}^2): \dim \mathcal{G} < \infty \text{ and } \ran (\Lambda (\widetilde{\mathcal{R}} \widetilde{\mathcal{R}}^* - 4 I_2) \Lambda) = L^2(\Sigma; \mathbb{C}^2) \ominus \mathcal{G}.
  \end{equation}
  Let $\Sigma_1 \subset \Sigma_0$ such that $\overline{\Sigma_1} \subset \Sigma_0$, where the closure is understood in the relative topology on $\Sigma_0$. Following closely arguments from Step 4 from the proof of \cite[Theorem~5.9~(i)]{BH20}, we will show that $(( \widetilde{\mathcal{R}}^* - \widetilde{\mathcal{R}}) \Lambda \varphi) \upharpoonright \Sigma_1 \in H^1(\Sigma_1; \mathbb{C}^2)$ for all $\varphi \in L^2(\Sigma; \mathbb{C}^2)$. Since Proposition~\ref{proposition_properties_R_3d}~(ii) implies that $\Lambda \widetilde{\mathcal{R}}$ is a bijection from $H^{1/2}(\Sigma; \mathbb{C}^2)$ to $L^2(\Sigma; \mathbb{C}^2)$, this yields~\eqref{condition_0_sigma_ess}.
  
  Recall that $\widetilde{\mathcal{R}}^* - \widetilde{\mathcal{R}}$ is an integral operator with integral kernel 
  \begin{equation*}
    t(x,y) := r(y,x)^* - r(x,y) = \frac{\sigma \cdot (x-y)}{\pi |x-y|^3} \sigma \cdot (\nu(y)-\nu(x))+2 \frac{\nu(x) \cdot (x-y)}{\pi |x-y|^3},
  \end{equation*}
  cf.~\eqref{int_kernel}. Clearly, $t(x,y)=0$ holds for all $x,y \in \Sigma_0$. Let $U_1 \subset \mathbb{R}^2$ and $\phi: U_1 \rightarrow \mathbb{R}^3$ be a map that parametrizes $\Sigma_1$, i.e. $\ran \phi = \Sigma_1$. Since for any $y \in \Sigma$ the map $U_1 \ni u \mapsto t(\phi(u), y)$ is $C^\infty$-smooth and $\Sigma \ni y \mapsto \partial_{u_j} t(\phi(u),y)$ is $C^1$-smooth for any fixed $j \in \{ 1,2 \}$ and $u \in U_1$, we find for any $\psi \in L^2(\Sigma; \mathbb{C}^2)$ that
  \begin{equation*}
    \partial_{u_j} \big( (\widetilde{\mathcal{R}}^* - \widetilde{\mathcal{R}}) \psi \big)(\phi(u)) = \int_\Sigma \partial_{u_j} t(\phi(u), y) \psi(y) \text{d} \sigma(y)
  \end{equation*}
  holds. With this, we get
  \begin{equation*}
    \begin{split}
      \big\| \partial_{u_j} \big( (\widetilde{\mathcal{R}}^* - \widetilde{\mathcal{R}}) \psi \big)  \big\|_{L^2(\Sigma_1; \mathbb{C}^2)}^2
      &= C_1 \int_{U_1} \left| \int_\Sigma \partial_{u_j} t(\phi(u), y) \psi(y) \text{d} \sigma(y) \right|^2 \text{d} u \\
      &= C_1 \int_{U_1} \left| \big( \partial_{u_j} t(\phi(u), \cdot),  \psi \big)_{H^{1/2}(\Sigma; \mathbb{C}^2) \times H^{-1/2}(\Sigma; \mathbb{C}^2)} \right|^2 \text{d} u \\
      & \leq C_1 \int_{U_1} \big\| \partial_{u_j} t(\phi(u), \cdot) \big\|_{H^{1/2}(\Sigma; \mathbb{C}^{2 \times 2})}^2 \cdot \| \psi \|_{ H^{-1/2}(\Sigma; \mathbb{C}^2)}^2 \text{d} u \\
      &=C_2 \| \psi \|_{ H^{-1/2}(\Sigma; \mathbb{C}^2)}^2.
    \end{split}
  \end{equation*}
  By continuity, this can be extended for all $\psi \in H^{-1/2}(\Sigma; \mathbb{C}^2)$, which shows that $( \widetilde{\mathcal{R}}^* - \widetilde{\mathcal{R}}) \Lambda \varphi) \upharpoonright \Sigma_1 \in H^1(\Sigma_1; \mathbb{C}^2)$ for all $\varphi \in L^2(\Sigma; \mathbb{C}^2)$ and yields the claim of the first step.

  \textit{Step~2:} By the result of \textit{Step~1} there exists a sequence $(\varphi_n) \subset L^2(\Sigma; \mathbb{C}^2)$ such that $\| \varphi_n \|_{L^2(\Sigma; \mathbb{C}^2)} = 1$, $\varphi_n$ converges weakly to zero, and 
  $\Lambda (\widetilde{\mathcal{R}} \widetilde{\mathcal{R}}^* - 4 I_2) \Lambda \varphi_n \rightarrow 0$ in $L^2(\Sigma; \mathbb{C}^2)$. It remains to verify that this sequence can be chosen such that $\Lambda \varphi_n \in \mathcal{H}_\pm$. 
  
  First, we show that at least one of the sequences $(\Lambda^{-1} P_\pm \Lambda \varphi_n)$ has the claimed properties. Since by Proposition~\ref{proposition_properties_R_3d}~(ii) the map $\mathcal{R}$ is bijective in $H^{1/2}(\Sigma; \mathbb{C}^2)$ with $\mathcal{R}^2 = 4 I_2$, we find with the mapping properties of $\Lambda$ that 
  \begin{equation} \label{equation_convergence}
    (\widetilde{\mathcal{R}}^* - \widetilde{\mathcal{R}}) \Lambda \varphi_n = \frac{1}{4} \widetilde{\mathcal{R}} ( \widetilde{\mathcal{R}} \widetilde{\mathcal{R}}^* - 4 I_2) \Lambda \varphi_n \rightarrow 0 \quad \text{in} \quad H^{1/2}(\Sigma; \mathbb{C}^2).
  \end{equation}
  Taking the construction of $P_\pm$ in~\eqref{relation_P_pm_R} into account, we conclude that
  \begin{equation*}
    \begin{split}
      (\widetilde{\mathcal{R}}^* - \widetilde{\mathcal{R}}) P_\pm \Lambda \varphi_n &= \pm \frac{1}{8} (\widetilde{\mathcal{R}}^* - \widetilde{\mathcal{R}}) \big( \widetilde{\mathcal{R}} + \widetilde{\mathcal{R}}^* \pm 4 I_2 - \mathcal{K} \big) \Lambda \varphi_n \\
      &= \mp \frac{1}{8} \big( \mathcal{R} + \mathcal{R}^* \mp 4 I_2 \big) (\widetilde{\mathcal{R}}^* - \widetilde{\mathcal{R}})  \Lambda \varphi_n \mp \frac{1}{8} (\widetilde{\mathcal{R}}^* - \widetilde{\mathcal{R}}) \mathcal{K} \Lambda \varphi_n \rightarrow 0
    \end{split}
  \end{equation*}
  in $H^{1/2}(\Sigma; \mathbb{C}^2)$ due to~\eqref{equation_convergence}, as $\mathcal{R}, \mathcal{R}^*$ give rise to bounded operators in $H^{1/2}(\Sigma; \mathbb{C}^2)$ by Proposition~\ref{proposition_R_C_3d} and $\mathcal{K}: H^{-1/2}(\Sigma; \mathbb{C}^2) \rightarrow H^{1/2}(\Sigma; \mathbb{C}^2)$ is compact by Theorem~\ref{proposition_anti_commutor_R_3d} and hence, it turns the weakly convergent sequence $(\Lambda \varphi_n)$ into a strongly convergent one. Using again $\mathcal{R}^2 = 4 I_2$ and the mapping properties of $\Lambda$, we conclude that $\Lambda (\widetilde{\mathcal{R}} \widetilde{\mathcal{R}}^* - 4 I_2) P_\pm \Lambda \varphi_n \rightarrow 0$ in $L^2(\Sigma; \mathbb{C}^2)$. Moreover, both sequences $(\Lambda^{-1} P_\pm \Lambda \varphi_n)$ are weakly converging to zero in $L^2(\Sigma; \mathbb{C}^2)$ and at least one of these sequences satisfies $\| \Lambda^{-1} P_\pm \Lambda \varphi_n \|_{L^2(\Sigma; \mathbb{C}^2)} \geq c$ for a constant $c > 0$. Hence, at least one of the sequences $(\Lambda^{-1} P_\pm \Lambda \varphi_n) \subset L^2(\Sigma; \mathbb{C}^2)$ is a singular sequence for $\Lambda (\widetilde{\mathcal{R}} \widetilde{\mathcal{R}}^* - 4 I_2) \Lambda$.

  So we can assume w.l.o.g. that $(\Lambda \varphi_n) \subset \mathcal{H}_+$ such that $\| \varphi_n \|_{L^2(\Sigma; \mathbb{C}^2)} = 1$, $\varphi_n$ converges weakly to zero, and 
  $\Lambda (\widetilde{\mathcal{R}} \widetilde{\mathcal{R}}^* - 4 I_2) \Lambda \varphi_n \rightarrow 0$ in $L^2(\Sigma; \mathbb{C}^2)$. But then, in view of Remark~\ref{remark_isomorphism}, also the sequence defined by $\psi_n := \Lambda^{-1} (\sigma \cdot \nu) \Lambda \varphi_n$ satisfies $\| \psi_n \|_{L^2(\Sigma; \mathbb{C}^2)} \geq c$ for some $c>0$, $\psi_n$ converges weakly to zero, and $\Lambda \psi_n \in \mathcal{H}_-$. Moreover, the relations in~\eqref{commutator_R_R_star} and Proposition~\ref{proposition_properties_R_3d}~(ii) imply that  
  \begin{equation*}
    \begin{split}
      \Lambda (\widetilde{\mathcal{R}} \widetilde{\mathcal{R}}^* - 4 I_2) \Lambda \psi_n &= \Lambda (\widetilde{\mathcal{R}} \widetilde{\mathcal{R}}^* - 4 I_2) (\sigma \cdot \nu) \Lambda \varphi_n = \Lambda (\sigma \cdot \nu) (\widetilde{\mathcal{R}}^* \widetilde{\mathcal{R}} - 4 I_2)  \Lambda \varphi_n \\
      &= \Lambda (\sigma \cdot \nu) \widetilde{\mathcal{R}}^* \widetilde{\mathcal{R}}\left(I_2 - \frac{1}{4} \widetilde{\mathcal{R}} \widetilde{\mathcal{R}}^* \right)  \Lambda \varphi_n \rightarrow 0
    \end{split}
  \end{equation*}
  in $L^2(\Sigma; \mathbb{C}^2)$. Thus, also $(\psi_n)$ fulfils all claimed properties and all claims are shown.
\end{proof}

Finally, combining the results from Lemma~\ref{corollary_R_C_bt} and Theorem~\ref{proposition_anti_commutor_R_3d}, we get the following decomposition of $\mathcal{C}_z$ into a positive and a negative part. In particular, this result implies that $\pm \frac{1}{2}$ belongs to the essential spectrum of $\mathcal{C}_z$, when this operator is viewed as a mapping in $H^s(\Sigma; \mathbb{C}^4)$, $s \in [-1, 1]$.

\begin{corollary} \label{corollary_positive_negative_3d}
  Let $\mathcal{C}_z$, $z \in (-m,m)$, be defined by~\eqref{def_C_lambda}, let $P_\pm$ be as in~Theorem~\ref{proposition_anti_commutor_R_3d}, and let $V$ be given by~\eqref{def_V_3d}. Then, for any $s \in [-1, 1]$ there exists a bounded operator $\mathcal{K}: H^{s}(\Sigma; \mathbb{C}^4) \rightarrow H^\tau(\Sigma; \mathbb{C}^4)$ with $\tau = \min\{1, s+1\}$ such that
  \begin{equation*}
    \mathcal{C}_z = \frac{1}{2} (\alpha \cdot \nu) V^* \begin{pmatrix} 0 & P_- - P_+ \\ P_- - P_+ & 0 \end{pmatrix} V (\alpha \cdot \nu) + \mathcal{K}.
  \end{equation*}
  In particular, the realization of $\mathcal{K}$ in $L^2(\Sigma; \mathbb{C}^4)$ is self-adjoint.
\end{corollary}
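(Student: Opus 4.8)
The plan is to invert the identity of Lemma~\ref{corollary_R_C_bt} to solve for $\mathcal{C}_z$ and then to insert the decomposition $\mathcal{R}+\mathcal{R}^* = 4P_+ - 4P_- + \mathcal{K}_1$ of Theorem~\ref{proposition_anti_commutor_R_3d} together with the smoothing property of $\mathcal{R}-\mathcal{R}^*$ from Proposition~\ref{proposition_properties_R_3d}~(i). Since $(\alpha\cdot\nu)^2=I_4$, since the matrix $V$ from~\eqref{def_V_3d} is unitary with $V^{-1}=V^*$ (because $(\sigma\cdot\nu)^2=I_2$), and since $\Lambda\colon H^r(\Sigma;\mathbb{C}^4)\to H^{r-1/2}(\Sigma;\mathbb{C}^4)$ is bijective for every $r$, I would multiply the identity of Lemma~\ref{corollary_R_C_bt} by $\frac{1}{4}(\alpha\cdot\nu)V^*\Lambda^{-1}$ on the left and by $\Lambda^{-1}V(\alpha\cdot\nu)$ on the right. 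The inner factors $\Lambda^{\pm1}$ then cancel the $\Lambda$'s sandwiching the off-diagonal block $\begin{pmatrix} 0 & \mathcal{R}^* \\ \mathcal{R} & 0\end{pmatrix}$ directly, while $\Lambda^{-1}$ commutes with the constant diagonal matrix, which yields, on $C^\infty(\Sigma;\mathbb{C}^4)$,
\begin{equation*}
  \mathcal{C}_z = \frac{1}{4} (\alpha\cdot\nu) V^* \left[ -\begin{pmatrix} 0 & \mathcal{R}^* \\ \mathcal{R} & 0 \end{pmatrix} + 4\begin{pmatrix} (z-m)\Lambda^{-2} & 0 \\ 0 & (z+m)\Lambda^{-2} \end{pmatrix} + \Lambda^{-1} \mathcal{K}_0 \Lambda^{-1} \right] V (\alpha\cdot\nu),
\end{equation*}
where $\mathcal{K}_0$ denotes the remainder of Lemma~\ref{corollary_R_C_bt}, which is bounded (in fact compact) on every Sobolev space $H^r(\Sigma;\mathbb{C}^4)$.

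Next I would write $\mathcal{R}$, resp.\ $\mathcal{R}^*$, as $\frac{1}{2}(\mathcal{R}+\mathcal{R}^*)$ plus, resp.\ minus, $\frac{1}{2}(\mathcal{R}-\mathcal{R}^*)$ and use Theorem~\ref{proposition_anti_commutor_R_3d} to get $\mathcal{R}=2P_+-2P_-+\mathcal{K}_2$ and $\mathcal{R}^*=2P_+-2P_-+\mathcal{K}_3$ with $\mathcal{K}_2=\frac{1}{2}\mathcal{K}_1+\frac{1}{2}(\mathcal{R}-\mathcal{R}^*)$ and $\mathcal{K}_3=\frac{1}{2}\mathcal{K}_1-\frac{1}{2}(\mathcal{R}-\mathcal{R}^*)$. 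Substituting this into the displayed formula, the $P_\pm$-part of $-\begin{pmatrix} 0 & \mathcal{R}^* \\ \mathcal{R} & 0\end{pmatrix}$ equals $2\begin{pmatrix} 0 & P_--P_+ \\ P_--P_+ & 0\end{pmatrix}$, which together with the prefactor $\frac{1}{4}$ reproduces exactly the asserted main term $\frac{1}{2}(\alpha\cdot\nu)V^*\begin{pmatrix} 0 & P_--P_+ \\ P_--P_+ & 0\end{pmatrix}V(\alpha\cdot\nu)$, and all remaining pieces are collected into
\begin{equation*}
  \mathcal{K} := \frac{1}{4} (\alpha\cdot\nu) V^* \left[ -\begin{pmatrix} 0 & \mathcal{K}_3 \\ \mathcal{K}_2 & 0 \end{pmatrix} + 4\begin{pmatrix} (z-m)\Lambda^{-2} & 0 \\ 0 & (z+m)\Lambda^{-2} \end{pmatrix} + \Lambda^{-1} \mathcal{K}_0 \Lambda^{-1} \right] V (\alpha\cdot\nu).
\end{equation*}

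It then remains to verify the mapping property of $\mathcal{K}$ and its self-adjointness, and to note that the identity extends by density from $C^\infty(\Sigma;\mathbb{C}^4)$. As multiplication by the smooth matrix functions $\alpha\cdot\nu$, $V$, $V^*$ is bounded on every Sobolev space, it suffices to bound the bracket from $H^s(\Sigma;\mathbb{C}^4)$ to $H^\tau(\Sigma;\mathbb{C}^4)$ with $\tau=\min\{1,s+1\}$ for $s\in[-1,1]$: the operators $\mathcal{K}_2,\mathcal{K}_3$ map $H^s$ into $H^\tau$ because $\mathcal{K}_1$ maps into $H^1$ (Theorem~\ref{proposition_anti_commutor_R_3d}) and $\mathcal{R}-\mathcal{R}^*$ into $H^{s+1}$ (Proposition~\ref{proposition_properties_R_3d}~(i)), with $H^1,H^{s+1}\hookrightarrow H^\tau$; the diagonal term maps $H^s$ into $H^{s+1}\hookrightarrow H^\tau$ since $\Lambda^{-2}\colon H^s\to H^{s+1}$; and $\Lambda^{-1}\mathcal{K}_0\Lambda^{-1}$ maps $H^s\to H^{s+1/2}\to H^{s+1/2}\to H^{s+1}\hookrightarrow H^\tau$ using the boundedness of $\mathcal{K}_0$ in $H^{s+1/2}$. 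For the self-adjointness, at $s=0$ we have $\tau=1$, so $\mathcal{K}$ is in particular bounded in $L^2(\Sigma;\mathbb{C}^4)$; the realization of $\mathcal{C}_z$ there is self-adjoint for $z\in(-m,m)$ by Proposition~\ref{proposition_C_z_basic_properties}~(ii), and the main term is self-adjoint in $L^2(\Sigma;\mathbb{C}^4)$ because in $L^2(\Sigma;\mathbb{C}^2)$ the projections $P_\pm$ of Theorem~\ref{proposition_anti_commutor_R_3d} are the orthogonal spectral projections of the self-adjoint operator $\mathcal{R}+\mathcal{R}^*$, while $\alpha\cdot\nu$ is self-adjoint and $V$ is unitary so that $((\alpha\cdot\nu)V^*)^*=V(\alpha\cdot\nu)$; hence $\mathcal{K}=\mathcal{C}_z-(\text{main term})$ is self-adjoint. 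The only point that requires some care is this last regularity bookkeeping: the cap at $H^1$ in $\tau=\min\{1,s+1\}$ is precisely the one inherited from $\mathcal{K}_1$ in Theorem~\ref{proposition_anti_commutor_R_3d}, whereas $\mathcal{R}-\mathcal{R}^*$ gains a full derivative; beyond this I do not anticipate any serious obstacle.
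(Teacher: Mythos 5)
Your argument is correct and is exactly the route the paper intends: the corollary is obtained by inverting the identity of Lemma~\ref{corollary_R_C_bt} (using that $(\alpha\cdot\nu)^2=I_4$, $V$ is unitary, and $\Lambda$ is bijective between the relevant Sobolev spaces) and then inserting the decomposition $\mathcal{R}+\mathcal{R}^*=4P_+-4P_-+\mathcal{K}$ from Theorem~\ref{proposition_anti_commutor_R_3d} together with the smoothing of $\mathcal{R}-\mathcal{R}^*$ from Proposition~\ref{proposition_properties_R_3d}~(i). Your bookkeeping of the mapping properties (the cap $\tau=\min\{1,s+1\}$ coming from the $H^1$-range of the Theorem's remainder) and the self-adjointness argument in $L^2$ match the paper's reasoning, so nothing further is needed.
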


\begin{remark} \label{remark_Hardy_spaces}
  In \cite[Section~4.2]{AMSV22} projections that are defined by
  \begin{equation*}
    Q_\pm := \frac{1}{2} I_2 \pm \frac{1}{4} \mathcal{R} \quad \text{and} \quad Q_\pm^* = \frac{1}{2} I_2 \pm \frac{1}{4} \mathcal{R}^*
  \end{equation*}
  were considered and they were identified as skew projections onto Hardy spaces. Note that Theorem~\ref{proposition_anti_commutor_R_3d} and $P_+ + P_- = I_2$ imply
  \begin{equation*}
    Q_\pm = \frac{1}{2} \left(I_2 \pm \frac{1}{4}(\mathcal{R} + \mathcal{R}^*) \right) \pm \frac{1}{8} (\mathcal{R} - \mathcal{R}^*) = P_\pm \pm \frac{1}{8} (\mathcal{R} - \mathcal{R}^* + \mathcal{K})
  \end{equation*}
  and similarly
  \begin{equation*}
    Q_\pm^* = \frac{1}{2} \left(I_2 \pm \frac{1}{4} (\mathcal{R} + \mathcal{R}^*) \right) \mp \frac{1}{8} (\mathcal{R} - \mathcal{R}^*) = P_\pm \mp \frac{1}{8} (\mathcal{R} - \mathcal{R}^* - \mathcal{K}).
  \end{equation*}
  Hence, the projections $P_\pm$ constructed in Theorem~\ref{proposition_anti_commutor_R_3d} coincide, up to the map $\pm \frac{1}{8} (\mathcal{R} - \mathcal{R}^* + \mathcal{K})$, which is by Proposition~\ref{proposition_properties_R_3d} bounded from $H^{s-1}(\Sigma; \mathbb{C}^{2})$ to $H^{s}(\Sigma; \mathbb{C}^{2})$ and hence compact in $H^s(\Sigma; \mathbb{C}^2)$ for any $s \in \mathbb{R}$, with the skew projections onto Hardy spaces considered in \cite[Section~4.2]{AMSV22}. However, in contrast to $P_\pm$ the projections $Q_\pm$ are not self-adjoint in $L^2(\Sigma; \mathbb{C}^2)$, unless $\Sigma$ is a sphere. This is a drawback, as $Q_\pm Q_\mp^* \neq 0$, unless $\Sigma$ is a sphere, which makes the application of $Q_\pm$ in problems involving the Schur complement of $\mathcal{C}_z$ and related operators more involved.
\end{remark}

\subsection*{Data availability statement}

Data sharing not applicable to this article as no datasets were generated or analysed during the current study.

\subsection*{Competing Interests}

The author has no competing interests to declare that are relevant to the content of this article.

\end{document}